\documentclass[12pt]{article}
\usepackage[utf8]{inputenc}
\usepackage{authblk}
\usepackage{fullpage}

\usepackage{amsmath, amsfonts, amssymb, latexsym, amsthm}
\usepackage{algorithm}
\usepackage{algpseudocode}
\usepackage{esvect}
\usepackage{graphicx}
\usepackage{setspace}
\usepackage{url}
\usepackage{hyperref}
\usepackage{wrapfig}
\usepackage{tikz}
\usepackage[dvipsnames]{xcolor}
\usetikzlibrary{calc}
\usetikzlibrary{positioning}
\usepackage{tkz-graph}
\usetikzlibrary{shapes.geometric}
\usetikzlibrary{decorations.markings}
\usetikzlibrary{decorations.pathreplacing}
\usetikzlibrary{patterns}
\usepackage[table]{xcolor}
\usepackage{mathtools}
\theoremstyle{definition}
\newtheorem{theorem}{Theorem}[section]
\newtheorem{lemma}[theorem]{Lemma}

\newtheorem{corollary}[theorem]{Corollary}
\newtheorem{conjecture}[theorem]{Conjecture}

\newtheorem{question}[theorem]{Question}

\newcommand{\deviations}{\Upsilon}

\usepackage{comment}

\title{Locally Semi-Equitable Colourings of BIBDs}

\author[1]{Andrea C. Burgess}
\author[2]{William Kellough}
\author[2]{David A. Pike}

\affil[1]{Department of Mathematics and Statistics, University of New Brunswick, Saint John, NB, E2L 4L5, Canada}
\affil[2]{Department of Mathematics and Statistics, Memorial University of Newfoundland, St. John’s, NL, A1C 5S7, Canada}

\begin{document}

\maketitle

\begin{abstract}
    We study $\ell$-colourings of $(v,k,\lambda)$-BIBDs (balanced incomplete block designs) where, within each block, one colour is absent and each of the $\ell-1$ other colours appears exactly $\frac{k}{\ell-1}$ times. We establish several necessary conditions for such colourings to exist. We also use these coloured BIBDs to provide new necessary conditions for the existence of Hadamard matrices, affine planes, and twin prime powers. 
\end{abstract}

\begin{quote} \small {\bf Keywords:} colouring; balanced incomplete block design; Hadamard matrices; affine planes; twin prime powers

\smallskip 

{\bf MSC2020: 05B05, 05C15}
\end{quote}

\section{Introduction}\label{sec: intro}

If $v,k,\lambda \in \mathbb{Z}^+$ with $2\leq k< v$, then a \emph{$(v,k,\lambda)$-BIBD (balanced incomplete block design)} is a pair $(V, \mathcal{B})$ where $V$ is a set of $v$ points and $\mathcal{B}$ is a collection of $k$-element subsets of $V$, called \emph{blocks}, such that for any pair of distinct $x,y\in V$, there are exactly $\lambda$ blocks containing both $x$ and $y$. The value $\lambda$ is called the \emph{index} of the design. If $C = \{c_1, \dots, c_{\ell}\}$, then an \emph{$\ell$-colouring} of a BIBD $(V, \mathcal{B})$ is a surjective mapping $f: V\to C$, where the elements of $C$ are called \emph{colours}. The \emph{colour class} of $c_i \in C$ is the set of all points in $V$ coloured $c_i$. A \emph{locally equitable colouring} of $(V,\mathcal{B})$ is a colouring such that for any $B\in \mathcal{B}$ and any two colour classes $C_1$ and $C_2$, $\big| |C_1 \cap B| - |C_2 \cap B|\big| \leq 1$.

The study of locally equitably coloured designs initially started with cycle systems, which were considered in \cite{ABLW04, ABW07, BM21, BM24, LW05, W05, W06}. Locally equitable colourings of group divisible designs and maximum packing designs are studied in \cite{BDDKLPY26}. Luther and Pike \cite{L16, LP16} investigated such colourings on BIBDs and characterized all BIBDs which admit a locally equitable colouring. We briefly note that we are using different terminology than in \cite{L16, LP16}. For what they called ``equitable'' colourings, we use the term ``locally equitable'' because ``equitable'' colourings in the hypergraph colouring literature may instead refer to colourings such that for any two colour classes $C_1$ and $C_2$, $\big||C_1| - |C_2|\big| \leq 1$. For a survey on these kinds of colourings for $(v,3,\lambda)$-BIBDs, see Sections 18.2--18.4 in \cite{CR99}. In the literature on colouring block designs, an ``equitable'' colouring may also refer to a specific kind of colouring where the blocks are assigned colours instead of the points (see for example \cite{GHMR08}), but such colourings are not studied in this paper.

Luther and Pike \cite{L16, LP16} showed that a BIBD admits a locally equitable $\ell$-colouring if and only if $\ell = v$ or it is a $(k+1, k, u(k-1))$-BIBD such that $k \equiv \ell - 1 \pmod{\ell}$ and $u \in \mathbb{Z}^+$. Because of how restrictive admitting a locally equitable colouring is for the parameters of a BIBD, one could ask: Which BIBDs can be coloured in a way that is close to a locally equitable colouring?

In this paper, we generalize locally equitable colourings to \emph{locally semi-equitable colourings}. Let $(V,\mathcal{B})$ be a $(v,k,\lambda)$-BIBD with $\mathcal{B} = \{B_1. \dots, B_b\}$. For each $B_i \in \mathcal{B}$, let $n_i \in \mathbb{Z}_{\geq 0}$ and let $\deviations$ be the multiset $\{n_1, \dots, n_b\}$. An \emph{$\deviations$-locally semi-equitable colouring}, abbreviated \emph{$\deviations$-LSE}, of $(V, \mathcal{B})$ is a colouring such that for each $B_i \in \mathcal{B}$:
\begin{enumerate}
    \item[(1)] one colour is represented exactly $n_i$ times within $B_i$, and
    \item[(2)] for any two colours $c_{j_1}$ and $c_{j_2}$ that are not the colour from condition (1), their colour classes $C_{j_1}$ and $C_{j_2}$ satisfy $\big||C_{j_1} \cap B_i| - |C_{j_2} \cap B_i|\big| \leq 1$. 
\end{enumerate}
In the case that $n_i = t$ for all $1\leq i\leq b$ where $t\in \mathbb{Z}^+$ is a constant, we write $t$-LSE instead of $\deviations$-LSE. Notably, if for all $1\leq i\leq b$, $n_i \in \left\{ \left\lfloor\frac{k}{\ell}\right\rfloor, \left\lceil\frac{k}{\ell}\right\rceil \right\}$, then an $\deviations$-LSE $\ell$-colouring is a locally equitable colouring.

Let $\deviations = \{n_1, \dots, n_b\}$. We separate $\deviations$-LSE colourings into two types. An $\deviations$-LSE colouring of a $(v,k,\lambda)$-BIBD is \emph{uniform} if $\frac{k-n_i}{\ell-1} \in \mathbb{Z}$ for all $1\leq i\leq b$ and \emph{nonuniform} otherwise. We further abbreviate uniform and nonuniform $\deviations$-LSE to $\deviations$-ULSE and $\deviations$-NLSE respectively. In particular, in a $t$-ULSE $\ell$-colouring of a $(v,k,\lambda)$-BIBD, $(\ell-1)$ divides $(k-t)$. The main focus of this paper is on $0$-ULSE colourings.

In Section~\ref{sec: Preliminaries} we give preliminary results on BIBDs. We also obtain results that apply to all $\deviations$-LSE coloured BIBDs and all $t$-LSE coloured BIBDs. The rest of the paper focuses on $0$-ULSE colourings. In Section~\ref{sec: Necessary Conditions} we establish necessary conditions for $0$-ULSE $\ell$-coloured $(v,k,\lambda)$-BIBDs, including a relation between $v$, $k$, and $\ell$. In Section~\ref{sec: Construction} we present the main result of our paper, which is a general framework for constructing $0$-ULSE coloured BIBDs. In Section~\ref{sec: Applications of general construction} we show that Hadamard matrices and affine planes can be used with the general framework in Section~\ref{sec: Construction} to construct families of $0$-ULSE coloured BIBDs. The results of Section~\ref{sec: Applications of general construction} also establish new necessary conditions for the existence of Hadamard matrices and affine planes. In Section~\ref{sec: TPP Construction} we show how twin prime powers can be used to construct $0$-ULSE coloured BIBDs, which gives a new necessary condition for the existence of twin prime powers. We also show that there exist $0$-ULSE coloured BIBDs beyond those that can be obtained by the construction in Section~\ref{sec: Construction}. We finish the paper with open problems and future work in Section~\ref{sec: Open Problems}.

\section{Preliminaries}\label{sec: Preliminaries}

In this section we present some elementary properties of BIBDs and $\deviations$-LSE colourings. For more background on BIBDs, we direct the reader to \cite{S04}.

\begin{lemma}\label{lem: replication num of a BIBD}
    If $x$ is a point in a $(v,k,\lambda)$-BIBD then the number of blocks containing $x$ is exactly $$r = \frac{\lambda(v-1)}{k-1}.$$
\end{lemma}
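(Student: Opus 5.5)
We establish the formula by double counting a set of incidences around the fixed point $x$. Let $r_x$ be the number of blocks containing $x$, and consider
$$S = \{(y, B) : y \in V\setminus\{x\},\ B \in \mathcal{B},\ \{x,y\} \subseteq B\}.$$

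\emph{Counting by points.} First fix $y$. For each of the $v-1$ points $y \neq x$, the defining property of a $(v,k,\lambda)$-BIBD gives exactly $\lambda$ blocks containing both $x$ and $y$. Hence $|S| = \lambda(v-1)$.

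\emph{Counting by blocks.} Now fix $B$. A block contributes to $S$ only if it contains $x$, and there are $r_x$ such blocks. Each such block has $k$ points, so it contains exactly $k-1$ points $y \neq x$. Hence $|S| = r_x(k-1)$.

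Equating the two counts gives $r_x(k-1) = \lambda(v-1)$. Since $k \geq 2$, we have $k-1 \neq 0$, so
$$r_x = \frac{\lambda(v-1)}{k-1}.$$
This value does not depend on $x$, which justifies writing simply $r$.

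The argument has no real obstacle. The only details needing care are that blocks are $k$-element sets, so each block through $x$ contains exactly $k-1$ other points, and that $k \geq 2$, so the division is legitimate. If $\mathcal{B}$ is a multiset with repeated blocks, each copy is treated as a separate element of $\mathcal{B}$, and the count goes through unchanged.
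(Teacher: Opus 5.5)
Your double-counting argument is correct and complete. This includes your two remarks: that $k\geq 2$ makes the division legitimate, and that repeated blocks are handled by counting each copy separately (the paper does use repeated blocks, in Theorem~\ref{thm: duplicating blocks}).

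The paper gives no proof of this lemma. It states it as a standard background fact and refers the reader to Stinson's text. Your count of the pairs $(y,B)$ with $\{x,y\}\subseteq B$ is exactly the standard textbook proof of that fact.
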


We call $r$ the \emph{replication number} of a BIBD and we continue denoting the replication number with $r$ throughout this paper. 

\begin{lemma}\label{lem: num of blocks in a BIBD}
    If $b$ is the number of blocks in a $(v,k,\lambda)$-BIBD then $$b = \frac{vr}{k} = \frac{\lambda v(v-1)}{k(k-1)}.$$
\end{lemma}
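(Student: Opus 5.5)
The plan is to double count the incidences between points and blocks, that is, the set $I = \{(x,B) : x \in V,\ B \in \mathcal{B},\ x \in B\}$. Lemma~\ref{lem: replication num of a BIBD} is taken as given, so every point lies in exactly $r = \frac{\lambda(v-1)}{k-1}$ blocks. The argument then has two steps.

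First, count $I$ by points. Each of the $v$ points contributes exactly $r$ pairs, so $|I| = vr$.

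Second, count $I$ by blocks. Each of the $b$ blocks is a $k$-element subset and contributes exactly $k$ pairs, so $|I| = bk$.

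Equating the two counts gives $bk = vr$. Since $k \geq 2 > 0$, this yields $b = \frac{vr}{k}$. Substituting the expression for $r$ from Lemma~\ref{lem: replication num of a BIBD} then gives
$$b = \frac{v}{k}\cdot\frac{\lambda(v-1)}{k-1} = \frac{\lambda v(v-1)}{k(k-1)}.$$

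A second route to the last equality, which also serves as a check, is to count the triples $(\{x,y\},B)$ with $x \neq y$ and $\{x,y\} \subseteq B$. Counting by pairs gives $\lambda\binom{v}{2}$, and counting by blocks gives $b\binom{k}{2}$. These agree and give the same formula directly.

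There is no real obstacle here. The only point needing care is that the blocks form a collection, possibly with repeated blocks, so $b$ must be counted with multiplicity. Both counts above do this automatically.
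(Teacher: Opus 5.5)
Your proof is correct. It is the standard argument: double-count point--block incidences to get $bk = vr$, then substitute $r$ from Lemma~\ref{lem: replication num of a BIBD}. The paper states this lemma without proof as textbook background, and that is the argument it implicitly relies on; your alternative count $b\binom{k}{2} = \lambda\binom{v}{2}$ is also valid and reaches the second formula without using $r$.
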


We continue denoting the number of blocks in a design with $b$ throughout this paper. The following is a well known result by Fisher \cite{F40}.

\begin{lemma}[Fisher's Inequality]\label{lem: Fisher's ineq.}
    If a $(v,k,\lambda)$-BIBD exists with replication number $r$ and $b$ blocks, then $v\leq b$ and, consequently, $k \leq r$.
\end{lemma}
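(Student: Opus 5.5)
The standard approach to Fisher's Inequality is linear algebra on the incidence matrix. Let $(V,\mathcal{B})$ be a $(v,k,\lambda)$-BIBD with points $x_1,\dots,x_v$ and blocks $B_1,\dots,B_b$. Define the $v\times b$ incidence matrix $N$ by $N_{ij}=1$ if $x_i\in B_j$ and $N_{ij}=0$ otherwise. I will compute $NN^{T}$, show it is nonsingular, and then bound its rank by $b$.

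First, the $(i,i)$ entry of $NN^{T}$ counts the blocks containing $x_i$. By Lemma~\ref{lem: replication num of a BIBD} this is $r$. For $i\neq j$, the $(i,j)$ entry counts the blocks containing both $x_i$ and $x_j$, which is $\lambda$. Hence
\[
NN^{T}=(r-\lambda)I_v+\lambda J_v,
\]
where $J_v$ is the all-ones matrix. The eigenvalues of $J_v$ are $v$ (on the all-ones vector) and $0$ with multiplicity $v-1$. So $NN^{T}$ has eigenvalues $r-\lambda+\lambda v=r+\lambda(v-1)$ once and $r-\lambda$ with multiplicity $v-1$, giving
\[
\det(NN^{T})=\bigl(r+\lambda(v-1)\bigr)(r-\lambda)^{v-1}.
\]

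The main point to verify is that $r>\lambda$. From Lemma~\ref{lem: replication num of a BIBD}, $r=\lambda(v-1)/(k-1)$, and the hypothesis $k<v$ gives $(v-1)/(k-1)>1$, so $r>\lambda$. Since also $r+\lambda(v-1)>0$, the determinant is nonzero and $NN^{T}$ has rank $v$. But $\operatorname{rank}(NN^{T})\le\operatorname{rank}(N)\le b$, so $v\le b$.

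Finally, Lemma~\ref{lem: num of blocks in a BIBD} gives $bk=vr$. Combined with $v\le b$, this yields $vk\le bk=vr$, hence $k\le r$.
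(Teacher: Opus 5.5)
Your proof is correct; it is the standard incidence-matrix argument due to Bose. The identity $NN^{T}=(r-\lambda)I_v+\lambda J_v$, the determinant $\bigl(r+\lambda(v-1)\bigr)(r-\lambda)^{v-1}$, the rank chain $v=\mathrm{rank}(NN^{T})\le\mathrm{rank}(N)\le b$, and the final step via $bk=vr$ are all sound. The strict inequality $r>\lambda$ tacitly uses $k\ge 2$, so that $k-1>0$; this is part of the paper's definition of a BIBD, so there is no gap.

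The paper gives no proof of this lemma. It quotes the inequality as a classical result of Fisher and uses it as a black box, for instance in Corollary~\ref{cor: bds on lambda in terms of k and ell}. So your argument makes that step self-contained, relying only on Lemmas~\ref{lem: replication num of a BIBD} and~\ref{lem: num of blocks in a BIBD}. Your argument never needs the columns of $N$ to be distinct. This matters because the paper's designs may contain repeated blocks, as in Theorem~\ref{thm: duplicating blocks}.

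If you want a route without linear algebra, a purely counting proof also works. Fix a block $B_0$ and let $n_1,\dots,n_{b-1}$ be its intersection sizes with the other blocks. Then $\sum n_i=k(r-1)$ and $\sum n_i(n_i-1)=k(k-1)(\lambda-1)$. Apply Cauchy--Schwarz and simplify using $r(k-1)=\lambda(v-1)$ and $bk=vr$. The slack becomes $(b-1)\sum n_i^2-\bigl(\sum n_i\bigr)^2=\frac{(v-k)^2\,r\,(r-k)}{v-1}\ge 0$.

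This yields $k\le r$ first and then $v\le b$, the reverse of your order. As a bonus, the equality case $r=k$ forces every $n_i=\lambda$. Your argument instead gives the structural fact that $N$ has full row rank.
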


A $(v,k,\lambda)$-BIBD is called \emph{symmetric} if $v = b$, or equivalently $k=r$.

Next, we show that given an $\deviations$-LSE coloured BIBD, we can obtain other designs with particular colourings of interest. 

\begin{theorem}\label{thm: duplicating blocks}
    Let $\mathcal{D}$ be a $(v,k,\lambda)$-BIBD. If $\mathcal{D}$ admits an $\deviations$-LSE colouring, then for any $m\in \mathbb{Z}^+$ there exists a $(v,k,m\lambda)$-BIBD that admits an $\deviations$-LSE colouring. 
\end{theorem}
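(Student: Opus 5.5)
The plan is to take $m$ copies of every block of $\mathcal{D}$ and keep the same colouring of the points. Write $\mathcal{D}=(V,\mathcal{B})$ with $\mathcal{B}=\{B_1,\dots,B_b\}$, and let $f\colon V\to C$ be an $\deviations$-LSE $\ell$-colouring with $\deviations=\{n_1,\dots,n_b\}$. Define $\mathcal{B}'$ to be the multiset consisting of $m$ copies of each $B_i$. Since the paper defines $\mathcal{B}$ as a collection, repeated blocks are allowed.

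First I check that $(V,\mathcal{B}')$ is a $(v,k,m\lambda)$-BIBD. Every block still has size $k$, and $2\le k<v$ is unchanged. For distinct $x,y\in V$, the blocks of $\mathcal{B}'$ containing both $x$ and $y$ are exactly the $m$ copies of each of the $\lambda$ blocks of $\mathcal{B}$ containing them. So there are $m\lambda$ such blocks.

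Next I check the colouring. The map $f$ is unchanged, so it is still a surjection onto $C$. For the $j$th copy of $B_i$, I assign the value $n_i$. The new multiset of deviations is then $\deviations'=\{n_1,\dots,n_b\}$ with each entry repeated $m$ times. Both conditions (1) and (2) depend only on the intersection sizes $|C_s\cap B|$, and these are identical for every copy of $B_i$. Hence each copy satisfies (1) and (2) with the same distinguished colour and the same $n_i$ as $B_i$ did.

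The only real obstacle is interpretive. The statement says ``an $\deviations$-LSE colouring'' even though the new design has $mb$ blocks, so $\deviations$ has to be read as the $m$-fold repetition of the original multiset. Equivalently, it is the same set of deviation values, with each block inheriting its value from the block it copies. In the constant case $n_i=t$ this is literally the same $t$-LSE condition, so nothing needs adjusting. I would state this reading explicitly, and also note that uniformity is preserved, since $\frac{k-n_i}{\ell-1}$ does not change.
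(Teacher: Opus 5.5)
Your proposal is correct and matches the paper's proof: replace each block $B_i$ by $m$ copies and keep the same point colouring, so each copy inherits $n_i$ and conditions (1) and (2) from $B_i$. Your remark that $\Upsilon$ must be read as the $m$-fold repetition of $\{n_1,\dots,n_b\}$ for the $mb$-block design makes explicit a point the paper's proof leaves implicit.
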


\begin{proof}
    Fix an $\deviations$-LSE colouring of $\mathcal{D}$. Let $\{B_1, \dots, B_b\}$ be the blocks of $\mathcal{D}$. Let $\mathcal{D}_m$ be the $(v,k,m\lambda)$-BIBD with blocks $\{B_{1_1}, B_{1_2}, \dots, B_{1_m}, B_{2_1}, \dots, B_{2_m}, \dots, B_{b_m}\}$ where $B_{i_j} = B_i$. If we assign the same colouring to $\mathcal{D}_m$ as we did to $\mathcal{D}$, then, since the blocks of $\mathcal{D}_m$ are copies of the blocks of $\mathcal{D}$, the colouring will be an $\deviations$-LSE colouring. 
\end{proof}

It is known that when given a BIBD, say $\mathcal{D}$, a different BIBD, $\mathcal{D}^\prime$, can be formed by letting the blocks of $\mathcal{D}^\prime$ be the complement of the blocks in $\mathcal{D}$. 

\begin{lemma}\label{lem: complement of a BIBD}
    Let $v,k,\lambda, b, r \in \mathbb{Z}^+$ with $k\leq v-2$. A $(v,k,\lambda)$-BIBD with $b$ blocks and replication number $r$ exists if and only if a $(v, v-k, b-2r+\lambda)$-BIBD exists.
\end{lemma}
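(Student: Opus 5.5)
The plan is to take complements of blocks and verify the BIBD condition for the complementary family directly. Starting from a $(v,k,\lambda)$-BIBD $(V,\mathcal{B})$ with $b$ blocks and replication number $r$, I will form $\mathcal{B}' = \{V\setminus B : B\in\mathcal{B}\}$, keeping multiplicities. Each block of $\mathcal{B}'$ has size $v-k$. The hypothesis $k\leq v-2$ gives $v-k\geq 2$. Since $k\geq 2$, we also have $v-k<v$, so the block size is admissible for a BIBD.

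The key step is to count, for distinct $x,y\in V$, the blocks of $\mathcal{B}'$ that contain both points. A block $V\setminus B$ contains both $x$ and $y$ exactly when $B$ avoids both of them. By inclusion--exclusion, the number of $B\in\mathcal{B}$ meeting $\{x,y\}$ is $r+r-\lambda$, using Lemma~\ref{lem: replication num of a BIBD} for the count of blocks through each point. Hence the number of blocks avoiding both points is $b-2r+\lambda$. This is independent of the choice of pair, so $(V,\mathcal{B}')$ is balanced with index $b-2r+\lambda$.

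It remains to show this index is a positive integer. Integrality is clear. For positivity I will use Fisher's Inequality (Lemma~\ref{lem: Fisher's ineq.}), which I expect to be the main delicate point. Fix a point $x$. The blocks not containing $x$ number $b-r$, and each pair $\{x',y\}$ with $x',y\neq x$ lies in $\lambda$ blocks through $x$ or not. A cleaner route is to note that $b-2r+\lambda=0$ would mean every block meets every pair $\{x,y\}$, so every block misses at most one point, giving $k\geq v-1$, which contradicts $k\leq v-2$. Positivity therefore follows from a direct argument that avoids Fisher's Inequality.

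For the converse, I will apply the same construction to a $(v,v-k,\lambda')$-BIBD with $\lambda' = b-2r+\lambda$. Its complement has block size $k$, and complementing twice returns the original family. The computation above shows the index comes out to $\lambda$. This relies on the fact that the number of blocks $b$ is preserved, and that the replication number of the complementary design is $b-r$, which ensures the formula inverts. Alternatively, the converse can be stated as: if a $(v,v-k,\mu)$-BIBD exists, then its complement is a $(v,k,\cdot)$-BIBD with the corresponding parameters, and the index then matches by the same count.
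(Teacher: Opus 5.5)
Your forward direction is the standard complementation argument, which the paper cites as known without giving a proof. The count of $2r-\lambda$ blocks meeting $\{x,y\}$ is correct. Your positivity argument is also valid: if $b-2r+\lambda=0$, no block misses two points, which forces $k\ge v-1$. This makes Fisher's Inequality unnecessary, so you can delete the abandoned Fisher sentences.

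The converse, however, is asserted rather than proved. Start from an arbitrary $(v,v-k,\lambda')$-BIBD $\mathcal{D}'$ with $\lambda'=b-2r+\lambda$. Your count shows its complement has index $b'-2r'+\lambda'$, where $b'$ and $r'$ are the block count and replication number of $\mathcal{D}'$ itself. You invoke three facts: that $b$ is preserved, that the replication number is $b-r$, and that ``complementing twice returns the original family.'' These hold only for complements of an existing $(v,k,\lambda)$-BIBD, which is exactly what the converse must produce. So they cannot be used here without proof.

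The missing step is to derive $b'=b$ and $r'=b-r$ from the parameters alone. Using $r(k-1)=\lambda(v-1)$ and $bk=vr$ (Lemmas~\ref{lem: replication num of a BIBD} and \ref{lem: num of blocks in a BIBD}),
\[(b-2r+\lambda)(v-1)=bv-b-2rv+r+rk=(b-r)(v-k-1).\]
Lemma~\ref{lem: replication num of a BIBD} applied to $\mathcal{D}'$ then gives $r'=\lambda'(v-1)/(v-k-1)=b-r$, using $v-k-1\ge 1$. Lemma~\ref{lem: num of blocks in a BIBD} gives $b'=v(b-r)/(v-k)=(vb-bk)/(v-k)=b$. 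Hence the complement of $\mathcal{D}'$ has index $b-2(b-r)+(b-2r+\lambda)=\lambda$.

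Equivalently, $b-2r+\lambda=\lambda\frac{(v-k)(v-k-1)}{k(k-1)}$. So complementation multiplies the index by a nonzero constant whose reciprocal is the same expression with $k$ and $v-k$ swapped. With this identity supplied, your proof is complete.
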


\begin{theorem}\label{thm: complements of SE colourings}
    Let $v,k,\lambda, b, r, \ell \in \mathbb{Z}^+$ such that $k\leq v-2$ and $\ell$ divides $v$, let $\deviations = \{n_1, \dots, n_b\}$, and let $\deviations^\prime = \{\frac{v}{\ell} - n_1, \dots, \frac{v}{\ell}- n_b\}$. An $\deviations$-LSE $\ell$-colouring of a $(v,k,\lambda)$-BIBD with $b$ blocks, replication number $r$, and each colour class having the same size exists if and only if an $\deviations^\prime$-LSE $\ell$-colouring of a $(v, v-k, b-2r+\lambda)$-BIBD exists.
\end{theorem}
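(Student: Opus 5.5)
The plan is to use the block-complementation of Lemma~\ref{lem: complement of a BIBD} and keep the same colouring on the points. Fix an $\deviations$-LSE $\ell$-colouring $f$ of a $(v,k,\lambda)$-BIBD $(V,\mathcal{B})$ in which every colour class $C_j$ has size $\frac{v}{\ell}$. Let $\mathcal{B}^\prime=\{V\setminus B_i : B_i\in\mathcal{B}\}$. By Lemma~\ref{lem: complement of a BIBD}, $(V,\mathcal{B}^\prime)$ is a $(v,v-k,b-2r+\lambda)$-BIBD; this uses $k\le v-2$. Colour it by the same map $f$. It is still surjective onto the $\ell$ colours, and the class sizes are unchanged.

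The key computation is that for every block and every colour,
$$|C_j\cap (V\setminus B_i)| = |C_j| - |C_j\cap B_i| = \tfrac{v}{\ell}-|C_j\cap B_i|.$$
Let $c$ be the colour witnessing condition (1) for $B_i$, so $|C_c\cap B_i|=n_i$. Then $c$ appears exactly $\frac{v}{\ell}-n_i$ times in $V\setminus B_i$, which is condition (1) for $\deviations^\prime$. For any two other colours $c_{j_1},c_{j_2}$, the difference of their counts in $V\setminus B_i$ is the negative of their difference in $B_i$. Its absolute value is therefore still at most $1$, which gives condition (2). So $f$ is an $\deviations^\prime$-LSE $\ell$-colouring of the complementary design.

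For the converse, I would apply the same argument to the $(v,v-k,b-2r+\lambda)$-BIBD. I first need to check that its parameters complement back correctly. It has $b^\prime=b$ blocks and replication number $r^\prime=b-r$, since each point lies outside exactly $b-r$ of the original blocks. Hence
$$b^\prime-2r^\prime+\lambda^\prime = b-2(b-r)+(b-2r+\lambda)=\lambda,$$
and $v-k\le v-2$ holds because $k\ge 2$. Complementing again therefore returns a $(v,k,\lambda)$-BIBD, and $\frac{v}{\ell}-\bigl(\frac{v}{\ell}-n_i\bigr)=n_i$ recovers $\deviations$.

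The main obstacle is exactly the converse direction. The identity $|C_j\cap(V\setminus B)|=\frac{v}{\ell}-|C_j\cap B|$ needs every colour class to have size $\frac{v}{\ell}$, and the statement only imposes this hypothesis explicitly on the $(v,k,\lambda)$ side. I would read the theorem as requiring equal class sizes on both sides, since this matches the symmetric role of $\deviations$ and $\deviations^\prime$, and say so explicitly. With that reading, the converse follows verbatim from the forward argument applied to the complementary design.
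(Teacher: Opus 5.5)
Your argument is correct and is the paper's: complement every block, keep the same colouring, and use $|C_j\cap(V\setminus B_i)|=\frac{v}{\ell}-|C_j\cap B_i|$. This turns $n_i$ into $\frac{v}{\ell}-n_i$ and preserves condition (2), and the same argument run on the complementary design gives the converse.

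Your caveat about the converse is well founded. The paper's ``by the same argument as above'' silently assumes equal class sizes on the $(v,v-k,b-2r+\lambda)$ side as well, and this assumption is genuinely needed. Take $K_4$ as a $(4,2,1)$-BIBD with $\ell=2$. Classes of sizes $1$ and $3$ give an $\Upsilon'$-LSE colouring with $\Upsilon'=\{1,1,1,0,0,0\}$. But every $2$-colouring of $K_4$ with two classes of size $2$ has four bichromatic blocks, each forcing $n_i=1$, so $\Upsilon=\{1,1,1,2,2,2\}$ cannot be realised.
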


\begin{proof}
    Let $\mathcal{D}$ be a $(v,k,\lambda)$-BIBD with blocks $\{B_1, \dots, B_b\}$ and fix an $\deviations$-LSE $\ell$-colouring of $\mathcal{D}$ such that all colour classes have the same size. Consequently, every colour class has size $\frac{v}{\ell}$. By the definition of an $\deviations$-LSE colouring, for each $1\leq i\leq b$, one colour is represented $n_i$ times in the block $B_i$ while the rest of the colours are represented either $\left\lfloor \frac{k-n_i}{\ell -1}\right\rfloor$ times or $\left\lceil \frac{k-n_i}{\ell -1}\right\rceil$ times. 

    By Lemma~\ref{lem: complement of a BIBD}, we can construct a $(v, v-k, b-2r+\lambda)$-BIBD, $\mathcal{D}^\prime$, by taking the complement of the blocks in $\mathcal{D}$. For each $1\leq i\leq b$, let $B_i^\prime$ be the complement of the block $B_i$. Suppose we colour the points of $\mathcal{D}^\prime$ the same as in the $\deviations$-LSE colouring of $\mathcal{D}$. We claim that this yields an $\deviations^\prime$-LSE $\ell$-colouring of $\mathcal{D}^\prime$. To see this, consider any block $B^\prime_i$ of $\mathcal{D}^\prime$. Since one colour, say $c_j$, is represented $n_i$ times in $B_i$, $c_j$ is represented $\frac{v}{\ell}-n_i$ times in $B_i^\prime$. For any other colour, say $c_m$, if $c_m$ is represented $\left\lfloor \frac{k-n_i}{\ell -1}\right\rfloor$ times in $B_i$, then $c_m$ will be represented $\frac{v}{\ell} - \left\lfloor \frac{k-n_i}{\ell -1}\right\rfloor$ times in $B^\prime_i$. Similarly, if $c_m$ is represented $\left\lceil \frac{k-n_i}{\ell -1}\right\rceil$ times in $B_i$, then $c_m$ will be represented $\frac{v}{\ell} - \left\lceil \frac{k-n_i}{\ell -1}\right\rceil$ times in $B^\prime_i$. Thus for any two colours $c_{m_1}$ and $c_{m_2}$ with corresponding colour classes $C_{m_1}$ and $C_{m_2}$ where $j\notin \{m_1, m_2\}$, we have that either \[|C_{m_1} \cap B^\prime_i| - |C_{m_2} \cap B^\prime_i| = 0\] or \[ \big||C_{m_1} \cap B^\prime_i| - |C_{m_2} \cap B^\prime_i|\big| = \left(\frac{v}{\ell} - \left\lfloor \frac{k-n_i}{\ell -1}\right\rfloor\right) - \left(\frac{v}{\ell} - \left\lceil \frac{k-n_i}{\ell -1}\right\rceil \right) = 1.\] 

    Suppose instead we fix an $\deviations^\prime$-LSE $\ell$-colouring of $\mathcal{D}^\prime$. By taking the block complement of $\mathcal{D}^\prime$, we obtain $\mathcal{D}$. By the same argument as above, $\mathcal{D}^\prime$ being $\deviations^\prime$-LSE $\ell$-colourable implies that $\mathcal{D}$ admits a $\{\frac{v}{\ell} - (\frac{v}{\ell}-n_1), \dots, \frac{v}{\ell}-(\frac{v}{\ell}-n_b)\}$-LSE $\ell$-colouring. So $\mathcal{D}$ is $\deviations$-LSE $\ell$-colourable. 
\end{proof}

By considering the case where each $n_i \in \deviations$ does not depend on $i$, we obtain the following corollary from Theorem~\ref{thm: complements of SE colourings}. 

\begin{corollary}\label{cor: complements of t-LSE colourings}
    Let $v,k,\lambda, b, r, \ell, t\in \mathbb{Z}^+$ such that $k\leq v-2$ and $\ell$ divides $v$. A $t$-LSE $\ell$-colouring of a $(v,k,\lambda)$-BIBD with $b$ blocks, replication number $r$, and with each colour class having the same size exists if and only if a $(\frac{v}{\ell}-t)$-LSE $\ell$-colouring of a $(v, v-k, b-2r+\lambda)$-BIBD exists.
\end{corollary}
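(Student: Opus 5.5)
This is a direct specialization of Theorem~\ref{thm: complements of SE colourings}. I would take $\deviations = \{n_1, \dots, n_b\}$ with $n_i = t$ for every $1 \leq i \leq b$. Then $\deviations^\prime = \{\frac{v}{\ell} - t, \dots, \frac{v}{\ell} - t\}$ is again constant, with value $\frac{v}{\ell}-t$. All the hypotheses of Theorem~\ref{thm: complements of SE colourings} ($k \leq v-2$, $\ell \mid v$, the parameters $b$ and $r$, and equal colour class sizes) are exactly those assumed in the corollary. The theorem therefore says that a $t$-LSE $\ell$-colouring of a $(v,k,\lambda)$-BIBD with equal colour classes exists if and only if a $\deviations^\prime$-LSE $\ell$-colouring of a $(v,v-k,b-2r+\lambda)$-BIBD exists. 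Since $\deviations^\prime$ is constant, the latter is by definition a $(\frac{v}{\ell}-t)$-LSE colouring.

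The one point needing care is the notational convention: ``$t$-LSE'' was defined only for $t \in \mathbb{Z}^+$, while $\frac{v}{\ell}-t$ could a priori be $0$ or negative. Every block contains the distinguished colour $\frac{v}{\ell}-t$ times, so this value is automatically a nonnegative integer whenever such a colouring exists. In the forward direction, each colour occurs in a block at most $\frac{v}{\ell}$ times because the colour classes have size $\frac{v}{\ell}$. Hence $t \leq \frac{v}{\ell}$, and $\frac{v}{\ell}-t \geq 0$ is a legitimate entry of a multiset $\deviations^\prime$. I would read ``$(\frac{v}{\ell}-t)$-LSE'' as shorthand for the constant multiset, exactly as the paper does for $\deviations$, and not worry about whether the value is positive.

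For the reverse direction I would likewise apply the ``if'' part of Theorem~\ref{thm: complements of SE colourings} with $n_i = t$. Strictly, that theorem assumes equal colour classes only on the $(v,k,\lambda)$ side. Its proof, however, only uses that the colouring is transferred unchanged between a design and its block complement, so the colour class sizes are the same on both sides. With $\deviations^\prime$ constant, the colouring of the complement is a $t$-LSE colouring with classes of size $\frac{v}{\ell}$, which is what we need. I expect no real obstacle; the only step requiring attention is this bookkeeping about the equal-class-size hypothesis and the nonnegativity of $\frac{v}{\ell}-t$.
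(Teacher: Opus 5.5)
Your specialization of Theorem~\ref{thm: complements of SE colourings} to $n_i=t$ is exactly how the paper obtains the corollary, and the forward direction is fine, including your remark that $t\le \frac{v}{\ell}$. The problem is the reverse direction. You rightly flagged it but then resolved it incorrectly. Transferring the colouring unchanged does keep each class the same size on both sides. However, nothing in the hypothesis makes those sizes equal to $\frac{v}{\ell}$, and the complementation computation needs exactly that: a class of size $c$ meeting a block in $n$ points meets the complementary block in $c-n$ points. With unequal classes the distinguished colour need not appear $t$ times, and the balance among the other colours can break. So your sentence ``the colouring of the complement is a $t$-LSE colouring with classes of size $\frac{v}{\ell}$'' does not follow.

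In fact the ``if'' direction is false as literally stated, so no bookkeeping can rescue it. Take $(v,k,\lambda,\ell,t)=(9,6,5,3,3)$, so that $b-2r+\lambda=1$ and $\frac{v}{\ell}-t=0$. In AG$(2,3)$ on $\mathbb{F}_3^2$, give $(0,0)$ colour $c_3$, give $(1,0),(2,0),(0,1),(0,2)$ colour $c_1$, and give the other four points colour $c_2$. Each of the $12$ lines then has colour counts $2,1,0$ in some order. This is therefore a $0$-LSE $3$-colouring of a $(9,3,1)$-BIBD, with classes of sizes $4,4,1$. The complement of the line $y=0$ contains $2,4,0$ points of colours $c_1,c_2,c_3$ respectively, so it is not $3$-LSE.

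Moreover, no $(9,6,5)$-BIBD has a $3$-LSE $3$-colouring with classes of size $3$. By the valid forward direction, its block complement would be a $0$-LSE $3$-coloured $(9,3,1)$-BIBD with classes of size $3$. Each of its $12$ blocks contains exactly one monochromatic pair, and each such pair lies in exactly one block. But there are only $3\binom{3}{2}=9$ monochromatic pairs of points.

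What is missing is the hypothesis that the $(\frac{v}{\ell}-t)$-LSE colouring of the $(v,v-k,b-2r+\lambda)$-BIBD also has all classes of size $\frac{v}{\ell}$. The paper's ``by the same argument as above'' in the proof of Theorem~\ref{thm: complements of SE colourings} uses this tacitly. With that hypothesis added, your argument is correct in both directions. The paper's uses of the reverse direction are unaffected, since they concern $0$-ULSE colourings, whose classes have size $\frac{v}{\ell}$ by Lemma~\ref{lem: l-1 divides k, colour classes the same size, calculating alpha}.
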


We will utilize Corollary~\ref{cor: complements of t-LSE colourings} over Theorem~\ref{thm: complements of SE colourings} since our focus is on $0$-LSE colourings.

\section{Necessary Conditions for \texorpdfstring{$0$}{0}-ULSE Coloured BIBDs}\label{sec: Necessary Conditions}

Since $k < v$ by the definition of a BIBD, every BIBD on $v$ points admits a $0$-LSE $v$-colouring. We call $0$-LSE $v$-colourings of a $(v,k,\lambda)$-BIBD \emph{trivial} and when $\ell < v$, we call $0$-LSE $\ell$-colourings \emph{nontrivial}. It follows immediately from the definition of $0$-LSE $\ell$-colourings that $\ell > 1$. We now show that $0$-LSE $2$-colourings do not exist. 

\begin{lemma}
    If there exists a $0$-LSE $\ell$-coloured BIBD, then $\ell > 2$.
\end{lemma}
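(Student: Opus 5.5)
We argue by contradiction. Suppose $(V,\mathcal{B})$ is a $(v,k,\lambda)$-BIBD carrying a $0$-LSE $\ell$-colouring. The remark preceding the statement already gives $\ell>1$, so it remains to rule out $\ell=2$. Assume $\ell=2$ with colours $c_1,c_2$ and colour classes $C_1,C_2$. Because the colouring is surjective, both classes are nonempty.

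The first step is to read off what condition (1) says when $n_i=0$ and there are only two colours. In each block $B_i$ some colour appears $0$ times, so every block is monochromatic: $B_i\subseteq C_1$ or $B_i\subseteq C_2$. Condition (2) is vacuous here, since only one colour remains.

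Next, pick $x\in C_1$ and $y\in C_2$. These points are distinct, so the BIBD property gives $\lambda\geq 1$ blocks containing both $x$ and $y$. Any such block contains a point of each colour, so it is not monochromatic. This contradicts the first step, and hence $\ell\neq 2$, so $\ell>2$.

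The argument has no real obstacle. The only point needing care is that $\lambda\in\mathbb{Z}^+$ forces at least one block through the mixed pair $\{x,y\}$, and surjectivity of the colouring is what guarantees such a pair exists. I would state both facts explicitly.
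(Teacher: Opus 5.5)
Your proof is correct and is the same as the paper's. With $\ell=2$ and $n_i=0$, every block is monochromatic, while surjectivity gives differently coloured points $x,y$ that must lie together in some block; like the paper, you leave $\ell=1$ to the remark preceding the lemma.
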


\begin{proof}
    If a $0$-LSE $2$-coloured BIBD exists, then every block contains points that are all the same colour. Suppose for a contradiction, we have a $0$-LSE $2$-coloured BIBD. Recalling that a colouring is surjective, there must be two points, $x$ and $y$, that are coloured differently. By the definition of a BIBD, there exists a block containing both $x$ and $y$ and so we have a block containing both colours. This is a contradiction and so there are no $0$-LSE $2$-coloured BIBDs. 
\end{proof}

In \cite{L16, LP16}, it was shown that if $\ell \geq k$, then a locally equitable $\ell$-coloured $(v,k,\lambda)$-BIBD exists if and only if $\ell = v$. We use an analogous argument to give a similar result for $0$-LSE colourings. 

\begin{lemma}\label{lem: upp bound on num of colours nB = 0}
    Let $\mathcal{D}$ be a $(v,k,\lambda)$-BIBD and let $\ell \geq k+1$. A $0$-LSE $\ell$-colouring of $\mathcal{D}$ exists if and only if $\ell = v$.
\end{lemma}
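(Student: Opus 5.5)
The plan is to prove both directions directly from the definition of a $0$-LSE colouring, using the fact that each block has $k$ points but $\ell-1\geq k$ available non-absent colours.

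\emph{The ``if'' direction.} This was already observed at the start of this section. When $\ell=v$, each point receives its own colour. Since $k<v$, every block misses at least one colour, so that colour is represented $0$ times. Every other colour appears $0$ or $1$ times in the block, so any two of them differ by at most $1$.

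\emph{The ``only if'' direction.} Suppose $\mathcal{D}$ has a $0$-LSE $\ell$-colouring with $\ell\geq k+1$. Fix a block $B$ and let $c$ be the colour absent from $B$ given by condition (1). The remaining $\ell-1\geq k$ colours share the $k$ points of $B$, and by condition (2) their counts pairwise differ by at most $1$. If some colour appeared at least twice in $B$, every other non-$c$ colour would appear at least once. The total count would then be at least $2+(\ell-2)=\ell\geq k+1>k$, a contradiction. Hence every colour appears at most once in each block, so the points of every block receive pairwise distinct colours.

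Now take any two distinct points $x,y\in V$. By the BIBD property (with $\lambda\geq 1$) some block contains both, so $f(x)\neq f(y)$. Thus $f$ is injective. Since a colouring is surjective by definition, $f$ is a bijection and $\ell=v$.

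The only delicate point is the counting step ruling out a repeated colour within a block. There one must use condition (2) across all $\ell-1$ non-absent colours, not just the ones present in $B$, to force each of them to appear at least once. Even that step is elementary, so I expect no real obstacle.
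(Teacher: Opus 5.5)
Your proof is correct and is essentially the paper's argument. Both take a block containing two points of the same colour and combine condition (2) with a pigeonhole count. The paper phrases this as finding a second absent colour $c_3$ after assuming $\ell<v$, while you bound the total count below by $\ell>k$ to show every block is rainbow; both then use that every pair of points lies in some block.
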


\begin{proof}
    If $\ell = v$, then assigning every vertex its own unique colour will result in a $0$-LSE $\ell$-colouring of $\mathcal{D}$.

    Suppose instead that $k+1 \leq \ell < v$ and suppose for a contradiction there exists a $0$-LSE $\ell$-colouring of $\mathcal{D}$. Since there are fewer colours than points, there exist two points, say $x$ and $y$, such that $x$ and $y$ are assigned the same colour, say $c_1$. Let $B$ be a block containing the pair $xy$. By the definition of a $0$-LSE colouring, one colour, say $c_2$, is absent from $B$. Since $|B| = k$, this leaves at least $k-1$ colours to be assigned to $k-2$ elements in $B$ that are neither $x$ nor $y$. Thus, a second colour will be missing from $B$, say $c_3$. If $C_1$ and $C_3$ are the colour classes for the colours $c_1$ and $c_3$ respectively, then $|C_1 \cap B| - |C_3 \cap B| > 1$. This contradicts our assumption that the BIBD is $0$-LSE $\ell$-coloured. 
\end{proof}

We therefore have the following bound on the number of colours in any nontrivial $0$-LSE colouring of a BIBD. 

\begin{theorem}\label{thm: nontrivial LSE cols bounds on ell}
    If a nontrivial $0$-LSE $\ell$-coloured $(v,k,\lambda)$-BIBD exists, then $3\leq \ell \leq k$.
\end{theorem}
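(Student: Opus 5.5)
The plan is to obtain the two bounds separately, each from a result established just above, using that ``nontrivial'' means $\ell < v$.

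For the lower bound, I would note that $\ell > 1$ is built into the definition of a $0$-LSE colouring. In every block one colour is absent, and by condition (2) the remaining colours share the block's $k \geq 2$ points, so at least one other colour must exist. The preceding lemma rules out $\ell = 2$, since it shows that no $0$-LSE $2$-coloured BIBD exists. Together these give $\ell \geq 3$.

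For the upper bound, I would argue by contradiction. Suppose $\ell \geq k+1$. Then Lemma~\ref{lem: upp bound on num of colours nB = 0} applies to the given $(v,k,\lambda)$-BIBD and says that a $0$-LSE $\ell$-colouring exists only if $\ell = v$. This contradicts the assumption that the colouring is nontrivial, i.e.\ that $\ell < v$. Hence $\ell \leq k$.

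The argument is just an assembly of earlier results. The only point needing care is to check that the hypothesis $\ell \geq k+1$ of Lemma~\ref{lem: upp bound on num of colours nB = 0} is exactly the negation of $\ell \leq k$, so the case split is exhaustive. I do not expect any real obstacle.
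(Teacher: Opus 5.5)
Your proposal is correct and is essentially the paper's own argument. The paper states this theorem as an immediate consequence of the remark that $\ell>1$, the lemma excluding $0$-LSE $2$-colourings, and Lemma~\ref{lem: upp bound on num of colours nB = 0}, which forces $\ell=v$ whenever $\ell\geq k+1$ and so contradicts nontriviality.
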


We now focus on $0$-ULSE colourings and show that for any given colour, the size of its colour class and the number of blocks containing the colour are constants that are independent of the given colour. To do this, we define two new parameters.

In any $0$-ULSE $\ell$-colouring of a BIBD, every block has $(\ell - 1)$ colours and $(\ell - 1)$ divides $k$. For a given block and a given colour $c_i$, $c_i$ either appears $\frac{k}{\ell-1}$ times or $0$ times. Let $\alpha_i$ (respectively $\gamma_i$) denote the number of blocks where $c_i$ appears $\frac{k}{\ell-1}$ (respectively $0$) times.

\begin{lemma}\label{lem: l-1 divides k, colour classes the same size, calculating alpha}
    In a $(v,k,\lambda)$-BIBD with a $0$-ULSE $\ell$-colouring, each colour appears in exactly \[\frac{r^2}{\lambda} - \frac{(\ell - 1) r^2}{\lambda k} + \frac{(\ell - 1)r}{k}\] blocks and each colour class has size $\frac{v}{\ell}$.
\end{lemma}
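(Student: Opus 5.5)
Fix a colour $c_i$ with colour class $C_i$, write $s_i = |C_i|$, and set $m = \frac{k}{\ell-1}$. The plan is to compute $\alpha_i$ and $s_i$ by double counting incidences and pairs.

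First, count point–block incidences of points in $C_i$. Each point lies in $r$ blocks by Lemma~\ref{lem: replication num of a BIBD}. Each block meeting $C_i$ contains exactly $m$ points of $C_i$. Hence
\[ s_i r = \alpha_i m. \]

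Second, count ordered pairs of distinct points of $C_i$ together with a block containing both. Each such pair lies in $\lambda$ blocks, and each block meeting $C_i$ contributes $m(m-1)$ ordered pairs. Hence
\[ s_i(s_i-1)\lambda = \alpha_i m(m-1). \]

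Dividing the second equation by the first (here $s_i \geq 1$ by surjectivity) gives $(s_i-1)\lambda = r(m-1)$, so
\[ s_i = 1 + \frac{r(m-1)}{\lambda}. \]
This is independent of $i$. Substituting back, $\alpha_i = s_i r/m$. We have $r/m = \frac{(\ell-1)r}{k}$ and $\frac{r^2(m-1)}{\lambda m} = \frac{r^2}{\lambda} - \frac{(\ell-1)r^2}{\lambda k}$, so
\[ \alpha_i = \frac{r^2}{\lambda} - \frac{(\ell-1)r^2}{\lambda k} + \frac{(\ell-1)r}{k}, \]
which is the stated formula.

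Since every $s_i$ equals the same value and the classes partition $V$, we get $\ell s_i = v$, that is, $s_i = v/\ell$.

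The one delicate point is the case $m=1$, i.e.\ $k=\ell-1$. Then both sides of the pair count vanish, and that equation says $s_i(s_i-1)\lambda = 0$, forcing $s_i = 1$. This agrees with the formula $1 + r(m-1)/\lambda = 1$, so the division step is harmless. Otherwise the argument is routine. The main thing to check carefully is that every block meeting $C_i$ contains exactly $m$ points of it, which is the definition of a $0$-ULSE colouring.
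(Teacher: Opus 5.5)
Your proposal is correct and follows essentially the same route as the paper: double counting point--block incidences ($\alpha_i m = r s_i$) and pairs inside $C_i$ (you count ordered pairs, the paper unordered), dividing to get $s_i = 1 + \frac{r(m-1)}{\lambda}$, noting this is independent of $i$ so $s_i = \frac{v}{\ell}$, and back-substituting for $\alpha_i$. Your extra remark on the case $m=1$ is harmless but not needed, since you divide by $\alpha_i m = s_i r > 0$, which is always legitimate.
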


\begin{proof}

Let $q = \frac{k}{\ell - 1}$. Let $c_i$ be a colour with corresponding colour class $C_i$.

Since each colour appears either $q$ or $0$ times in a block,

\begin{equation}\label{eq: k=0 mod l-1, alpha + gamma = b}
    \alpha_i + \gamma_i = b.
\end{equation}

By counting, in two different ways, the number of pairs $(x,B)$ where $x$ is a point coloured $c_i$ and $B$ is a block containing $x$ we obtain the following.

\begin{equation}\label{eq: k=0 mod l-1, alpha q = rCi}
    \alpha_i q = r|C_i|
\end{equation}

By counting, in two different ways, the number of pairs $(xy, B)$ where $xy$ is an unordered pair of points both coloured $c_i$ and $B$ is a block containing the pair $xy$ we obtain the following.

\begin{equation}\label{eq: k=0 mod l-1, alpha qC2 = lambda |C_i|C2}
    \alpha_i \binom{q}{2} = \lambda \binom{|C_i|}{2}
\end{equation}

By combining Equations \eqref{eq: k=0 mod l-1, alpha q = rCi} and \eqref{eq: k=0 mod l-1, alpha qC2 = lambda |C_i|C2} we obtain \[r|C_i| \frac{q-1}{2} = \frac{\lambda}{2}|C_i| (|C_i| - 1).\] Solving for $|C_i|$ gives 
\begin{equation}\label{eq: formula for |Ci| in terms of krlambda}
    |C_i| = \frac{kr}{\lambda (\ell -1)} - \frac{r}{\lambda} + 1.
\end{equation}

Since the right-hand side is independent of $i$, every colour class has the same size. Therefore, $|C_i| = \frac{v}{\ell}$ for every colour class $C_i$. Substituting $|C_i| = \frac{kr}{\lambda (\ell -1)} - \frac{r}{\lambda} + 1$ into Equations \eqref{eq: k=0 mod l-1, alpha + gamma = b} and \eqref{eq: k=0 mod l-1, alpha q = rCi} gives the following values for $\alpha_i$ and $\gamma_i$.

\begin{align}
    \alpha_i &= \frac{r^2}{\lambda} - \frac{(\ell - 1) r^2}{\lambda k} + \frac{(\ell - 1)r}{k} \label{eq: alpha formula} \\
    \gamma_i &= b - \frac{r^2}{\lambda} + \frac{(\ell - 1) r^2}{\lambda k} - \frac{(\ell - 1)r}{k} \label{eq: gamma formula}
\end{align}

\end{proof}

Notably, $\alpha_i$ and $\gamma_i$ are also independent of $i$. So, instead of $\alpha_i$ and $\gamma_i$, we write $\alpha$ and $\gamma$ for the remainder of the paper. 

\begin{theorem}\label{thm: strong necessary condition for 0-LSE ell-col with ell-1 | k}
    If a $(v,k,\lambda)$-BIBD with a $0$-ULSE $\ell$-colouring exists, then 
    \begin{equation}\label{eq: k=0 mod l-1, soln for v}
        v = \frac{\ell(\ell - 2)k}{(\ell-1)^2- k}.
    \end{equation}
\end{theorem}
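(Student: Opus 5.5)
The plan is to combine the two expressions for the colour class size that Lemma~\ref{lem: l-1 divides k, colour classes the same size, calculating alpha} already gives us, and then solve for $v$. By that lemma, every colour class $C_i$ has size $\frac{v}{\ell}$. By Equation~\eqref{eq: formula for |Ci| in terms of krlambda}, it also has size $\frac{kr}{\lambda(\ell-1)} - \frac{r}{\lambda} + 1$. Equating these gives
\[
\frac{v}{\ell} - 1 = \frac{r}{\lambda}\cdot\frac{k-(\ell-1)}{\ell-1}.
\]
By Lemma~\ref{lem: replication num of a BIBD}, $\frac{r}{\lambda} = \frac{v-1}{k-1}$, so $r$ and $\lambda$ are eliminated. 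What remains is a single linear equation in $v$:
\[
(v-\ell)(k-1)(\ell-1) = \ell(v-1)(k-\ell+1).
\]

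Next, expand both sides and collect the terms involving $v$. The coefficient of $v$ is
\[
(k-1)(\ell-1) - \ell(k-\ell+1) = \ell^2 - 2\ell + 1 - k = (\ell-1)^2 - k.
\]
The constant side is
\[
\ell\big[(k-1)(\ell-1) - (k-\ell+1)\big] = \ell k(\ell-2).
\]
Hence
\[
v\big((\ell-1)^2 - k\big) = \ell(\ell-2)k.
\]
Dividing by $(\ell-1)^2-k$ gives Equation~\eqref{eq: k=0 mod l-1, soln for v}.

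The only delicate point is justifying the division, that is, showing $(\ell-1)^2 \neq k$. If $(\ell-1)^2 = k$, the identity above forces $\ell(\ell-2)k = 0$. Since $k \geq 2$ and $\ell \geq 1$, this means $\ell = 2$. That contradicts the earlier lemma that no $0$-LSE $2$-coloured BIBD exists. Therefore the denominator is nonzero and the formula holds.

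The argument does not need to split into trivial and nontrivial colourings. It only uses Lemma~\ref{lem: l-1 divides k, colour classes the same size, calculating alpha}, which holds for any $0$-ULSE colouring. As a consistency check, in the trivial case $\ell = v$ uniformity forces $k = v-1$, and the formula then correctly returns $v$. I expect no real obstacle beyond carrying out the algebra carefully.
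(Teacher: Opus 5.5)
Your argument is correct. It is not quite the paper's proof, but it is exactly the alternative the paper mentions in the remark right after the proof and leaves to the reader.

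The paper counts blocks. Since each block misses exactly one colour, $\sum_i \gamma_i = b$, so $\ell\alpha = (\ell-1)b$. It then substitutes the formula \eqref{eq: alpha formula} for $\alpha$, together with $b = vr/k$ and $r = \lambda(v-1)/(k-1)$.

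You count points instead, equating the pair-counting expression \eqref{eq: formula for |Ci| in terms of krlambda} for $|C_i|$ with $v/\ell$. The two inputs carry the same information: via $\alpha_i \frac{k}{\ell-1} = r|C_i|$ and $bk = vr$, the identity $\ell\alpha = (\ell-1)b$ is equivalent to $|C_i| = v/\ell$. Indeed, multiplying the paper's equation by $k/r$ reduces it to precisely your relation $(v-\ell)(k-1)(\ell-1) = \ell(v-1)(k-\ell+1)$. Your route just skips the $\alpha,\gamma$ bookkeeping and the elimination of $b$.

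Your write-up also improves on the paper by justifying the division by $(\ell-1)^2-k$, which the paper's ``rearranging to solve for $v$'' passes over. That step is even easier than you make it. If $(\ell-1)^2 = k$, then $\ell = 2$ would force $k = 1$, contradicting $k \geq 2$. So you do not need the lemma ruling out $0$-LSE $2$-colourings.
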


\begin{proof}
Since $\gamma$ measures the number of blocks any given colour does not appear in, and since exactly one colour is missing from each block, 
\begin{equation}\label{eq: k=0 mod l-1, sum gamma = b}
    \sum_{i=1}^\ell \gamma = b.
\end{equation}
Combining Equation~\eqref{eq: alpha formula}, Equation~\eqref{eq: k=0 mod l-1, alpha + gamma = b} after summing both sides from $1$ to $\ell$, and Equation~\eqref{eq: k=0 mod l-1, sum gamma = b} yields 
\[b(\ell - 1) = \sum_{i=1}^\ell \alpha = \frac{\ell r^2}{\lambda} - \frac{\ell(\ell - 1) r^2}{\lambda k} + \frac{\ell(\ell - 1)r}{k}.\]
By using Lemmas~\ref{lem: replication num of a BIBD} and \ref{lem: num of blocks in a BIBD} and rearranging to solve for $v$, we get
\begin{equation*}
    v = \frac{\ell(\ell - 2)k}{(\ell - 1)^2 - k}. \qedhere
\end{equation*}
\end{proof}

We briefly note that Equation~\eqref{eq: k=0 mod l-1, soln for v} can be obtained in a different way by rearranging Equation~\eqref{eq: formula for |Ci| in terms of krlambda} after substituting $|C_i| = \frac{v}{\ell}$. We leave this to the reader to verify.

If we substitute $v = \ell$ into Equation~\eqref{eq: k=0 mod l-1, soln for v}, solving for $v$ leads to $v = k+1$. The only BIBDs that have $k+1$ points and block size $k$ are BIBDs of the form $(k+1, k, u(k-1))$ where $u \in \mathbb{Z}^+$. Furthermore, assigning a unique colour to every point of a $(k+1, k, u(k-1))$-BIBD produces a $0$-ULSE colouring. Therefore, a BIBD admits a trivial $0$-ULSE colouring if and only if it is a $(k+1, k, u(k-1))$-BIBD where $u\in \mathbb{Z}^+$. This is the same family of BIBDs, as mentioned in Section~\ref{sec: intro}, that admit a locally equitable $\ell$-colouring where $\ell < v$. This is not a coincidence. Every block of a $0$-ULSE $v$-coloured BIBD will have one colour absent and the rest appearing exactly once. Therefore, for any block $B$ and colour classes $C_i$ and $C_j$, $||C_i \cap B| - |C_j \cap B|| \leq 1$. So every trivial $0$-ULSE colouring is a locally equitable colouring.

Notably, not every BIBD that satisfies Equation~\eqref{eq: k=0 mod l-1, soln for v} admits a $0$-ULSE colouring. As an example, consider the BIBDs with parameters $(15,8,4)$. The complement of a $(15,8,4)$-BIBD is a $(15,7,3)$-BIBD, and so there are the same number of $(15,8,4)$-BIBDs up to isomorphism as there are $(15,7,3)$-BIBDs. In Table 1.30 of Section II.1.2 in \cite{HCD}, each of the five nonisomorphic $(15,7,3)$-BIBDs are listed. Through a computational search, we have determined that the first $(15,7,3)$-BIBD listed has a $3$-ULSE $5$-colouring with all colour classes of size three while the rest do not. Therefore, by Corollary~\ref{cor: complements of t-LSE colourings}, there is exactly one $(15,8,4)$-BIBD with a $0$-ULSE $5$-colouring while every other $(15,8,4)$-BIBD does not have such a colouring.

Using Theorem~\ref{thm: strong necessary condition for 0-LSE ell-col with ell-1 | k}, we can obtain bounds on the block size of any design that admits a $0$-ULSE colouring.

\begin{lemma}\label{lem: k=0 mod l-1, bounds on k}
    If a $(v,k,\lambda)$-BIBD admits a nontrivial $0$-ULSE $\ell$-colouring, then \[2(\ell - 1) \leq k \leq (\ell - 1)(\ell - 2).\]
\end{lemma}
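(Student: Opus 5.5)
The plan is to use Theorem~\ref{thm: strong necessary condition for 0-LSE ell-col with ell-1 | k} and write $k=q(\ell-1)$, where $q=\frac{k}{\ell-1}\in\mathbb{Z}^+$ is the number of times each present colour appears in a block.

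\textbf{Upper bound.} Since $v>0$ and $\ell\geq 3$ (so $\ell(\ell-2)k>0$), Equation~\eqref{eq: k=0 mod l-1, soln for v} forces $(\ell-1)^2-k>0$, i.e.\ $k<(\ell-1)^2$. Divisibility then sharpens this: $k=q(\ell-1)<(\ell-1)^2$ gives $q<\ell-1$, hence $q\leq \ell-2$ and $k\leq(\ell-1)(\ell-2)$.

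\textbf{Lower bound.} It suffices to rule out $q=1$, which gives $k\geq 2(\ell-1)$. If $q=1$, then $k=\ell-1$, and substituting into Equation~\eqref{eq: k=0 mod l-1, soln for v} gives
\[
v=\frac{\ell(\ell-2)(\ell-1)}{(\ell-1)^2-(\ell-1)}=\frac{\ell(\ell-2)(\ell-1)}{(\ell-1)(\ell-2)}=\ell .
\]
Here $\ell\geq 3$ ensures $\ell-2\neq 0$. So $v=\ell$, and the colouring is trivial, contradicting nontriviality. Alternatively, each colour appears at most once per block. Two points of the same colour lie together in some block, so each colour class is a singleton, and again $\ell=v$.

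The only delicate point is the sign and divisibility argument for the upper bound. The remaining steps are direct substitution, using Theorem~\ref{thm: nontrivial LSE cols bounds on ell} for $\ell\geq 3$.
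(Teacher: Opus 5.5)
Your proposal is correct and follows the paper's proof. The upper bound is argued identically: the positive numerator in Equation~\eqref{eq: k=0 mod l-1, soln for v} forces $k<(\ell-1)^2$, and divisibility by $\ell-1$ then gives $k\le(\ell-1)(\ell-2)$. For the lower bound, the paper excludes $k=\ell-1$ by citing Lemma~\ref{lem: upp bound on num of colours nB = 0} to get $k\ge\ell$, whereas you substitute $q=1$ into the $v$-formula to get $v=\ell$; this is only a cosmetic difference, and your alternative pair argument is essentially that lemma's proof.
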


\begin{proof}
    Since the colouring is nontrivial, we have $k \geq \ell$ by Lemma~\ref{lem: upp bound on num of colours nB = 0}. Necessarily, $k\equiv 0 \pmod{(\ell - 1)}$ and so the smallest possible value of $k$ is $2(\ell -1)$. 

    It must hold that the denominator of Equation~\eqref{eq: k=0 mod l-1, soln for v} is positive since its numerator is always positive for all $\ell \geq 3$. Thus 
    \[k \leq (\ell - 1)^2 - 1.\]

    Again, since $k\equiv 0 \pmod{(\ell - 1)}$, the largest possible value for $k$ is $(\ell - 1)^2 - (\ell - 1) = (\ell - 1)(\ell - 2)$. 
\end{proof}

As a consequence of Lemma~\ref{lem: k=0 mod l-1, bounds on k}, we can give tighter bounds on $\ell$ than in Theorem~\ref{thm: nontrivial LSE cols bounds on ell} when only considering $0$-ULSE colourings.

\begin{theorem}\label{thm: 0-ULSE bounds on ell}
    If there exists a nontrivial $0$-ULSE $\ell$-coloured $(v,k,\lambda)$-BIBD, then \[\frac{3 + \sqrt{4k+1}}{2} \leq \ell \leq \frac{k}{2}+1.\]
\end{theorem}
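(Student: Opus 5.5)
This follows directly from Lemma~\ref{lem: k=0 mod l-1, bounds on k}, which gives
\[2(\ell-1) \leq k \leq (\ell-1)(\ell-2)\]
for any nontrivial $0$-ULSE $\ell$-colouring of a $(v,k,\lambda)$-BIBD. The plan is to invert each of these two inequalities separately to get a bound on $\ell$ in terms of $k$.

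For the upper bound on $\ell$, I use the left inequality $2(\ell-1)\leq k$. Dividing by $2$ and rearranging gives $\ell \leq \frac{k}{2}+1$ immediately.

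For the lower bound on $\ell$, I use the right inequality $k \leq (\ell-1)(\ell-2) = \ell^2 - 3\ell + 2$. This rearranges to the quadratic inequality
\[\ell^2 - 3\ell + (2-k) \geq 0.\]
The roots of $x^2 - 3x + (2-k)$ are
\[\frac{3 \pm \sqrt{9 - 4(2-k)}}{2} = \frac{3 \pm \sqrt{4k+1}}{2}.\]
Since the parabola opens upward, the inequality holds exactly when $\ell$ is at most the smaller root or at least the larger root.

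The only point that needs care is ruling out the branch $\ell \leq \frac{3-\sqrt{4k+1}}{2}$. Because $k \geq 2$, we have $\sqrt{4k+1} \geq 3$, so this smaller root is at most $0$. But $\ell \geq 3$ by Theorem~\ref{thm: nontrivial LSE cols bounds on ell}, so that branch cannot occur. Hence $\ell \geq \frac{3+\sqrt{4k+1}}{2}$.

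Combining the two bounds gives the stated inequality. I expect no real obstacle; the only step needing justification is discarding the smaller root, which the bound $\ell \geq 3$ handles.
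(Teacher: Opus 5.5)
Your proposal is correct and follows essentially the same route as the paper. Both arguments invert the two inequalities $2(\ell-1)\leq k\leq(\ell-1)(\ell-2)$ and use the quadratic formula for the lower bound. Your justification for discarding the smaller root, namely that it is at most $0$ since $k\geq 2$ while $\ell\geq 3$, is slightly more careful than the paper's brief remark about eliminating the negative solution.
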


\begin{proof}
    From Lemma~\ref{lem: k=0 mod l-1, bounds on k}, $2(\ell-1) \leq k \leq (\ell-1)(\ell-2)$. Rearranging $k\leq (\ell-1)(\ell-2)$ gives \[\ell^2 -3\ell +2 - k \geq 0.\] Applying the quadratic formula and eliminating the negative solution for $\ell$ yields \[\ell \geq \frac{3 + \sqrt{4k+1}}{2}.\] Rearranging the lower bound $2(\ell-1)\leq k$ gives \[\ell \leq \frac{k}{2} + 1. \qedhere\] 
\end{proof}

It follows from Lemma~\ref{lem: k=0 mod l-1, bounds on k} and Theorem~\ref{thm: 0-ULSE bounds on ell} that in any $0$-ULSE $\ell$-coloured $(v,k,\lambda)$-BIBD, $\ell \geq 4$ and $k\geq 6$. Observe that the necessary conditions of Lemma~\ref{lem: k=0 mod l-1, bounds on k} are violated when $\ell \leq 3$. Likewise, the necessary conditions of Theorem~\ref{thm: 0-ULSE bounds on ell} are violated when $k\leq 5$.

The following result shows that within the set of all blocks $\mathcal{B}_j$ that do not contain any points from the $j$-th colour class, for any point $x$ not in the $j$-th colour class, $x$ is contained in a constant number of blocks in $\mathcal{B}_j$.

\begin{lemma}\label{lem: points in colour class are 'evenly' distributed}
    Let $(V,\mathcal{B})$ be a $0$-ULSE $\ell$-coloured $(v,k,\lambda)$-BIBD with colour classes $C_1$, $\dots$, $C_\ell$. For each $1\leq j\leq \ell$, let $\mathcal{B}_j = \{B\in \mathcal{B} \mid B\cap C_j = \emptyset\}$. If $i\neq j$ and $x\in C_i$, then $x$ occurs in exactly \[\frac{\lambda(v-1)}{k-1} - \frac{\lambda v(\ell-1)}{k\ell}\] blocks in $\mathcal{B}_j$. 
\end{lemma}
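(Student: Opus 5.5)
Fix $j$, $i\neq j$, and $x\in C_i$. We will count the blocks through $x$ that do \emph{not} lie in $\mathcal{B}_j$, i.e.\ the blocks containing $x$ that meet $C_j$, and subtract this from $r$. Write $q=\frac{k}{\ell-1}$. By the definition of a $0$-ULSE colouring, any block meeting $C_j$ contains exactly $q$ points of $C_j$.

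We double count the pairs $(y,B)$ with $y\in C_j$ and $B$ a block containing both $x$ and $y$. Since $x\notin C_j$, each of the $|C_j|$ points $y$ forms a pair $xy$ lying in exactly $\lambda$ blocks, so the count is $\lambda|C_j|=\lambda\frac{v}{\ell}$, using Lemma~\ref{lem: l-1 divides k, colour classes the same size, calculating alpha}. On the other hand, each block through $x$ meeting $C_j$ contributes exactly $q$ such pairs, and blocks in $\mathcal{B}_j$ contribute none. Hence the number of blocks through $x$ meeting $C_j$ is
\[\frac{\lambda v/\ell}{q}=\frac{\lambda v(\ell-1)}{k\ell}.\]

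The number of blocks through $x$ lying in $\mathcal{B}_j$ is therefore $r-\frac{\lambda v(\ell-1)}{k\ell}$. Substituting $r=\frac{\lambda(v-1)}{k-1}$ from Lemma~\ref{lem: replication num of a BIBD} gives exactly the stated expression. The only real input is that colour classes have size $\frac{v}{\ell}$, which Lemma~\ref{lem: l-1 divides k, colour classes the same size, calculating alpha} already supplies. The one point that needs care is that every block meeting $C_j$ contains exactly $q$, not merely at least one, points of $C_j$. This is precisely where uniformity of the colouring is used.
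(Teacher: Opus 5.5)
Your proposal is correct and follows essentially the same route as the paper. Both double count the pairs $(y,B)$ with $y\in C_j$ and $x,y\in B$ to get $\lambda v/\ell$. Both divide by $\frac{k}{\ell-1}$ to count the blocks through $x$ that meet $C_j$, and both subtract that count from $r$.
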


\begin{proof}
    Fix $x\in C_i$. Since every pair of points occurs in $\lambda$ blocks and $\mathcal{B}\backslash\mathcal{B}_j$ contains every block that contains both $x$ and points from $C_j$, we count the size of the following multiset: \[\big|\big\{y \mid \{x,y\}\subset B \in \mathcal{B}\backslash\mathcal{B}_j, y\in C_j \big\}\big| = \lambda|C_j| = \frac{\lambda v}{\ell}.\] Because every block containing $x$ and a point from $C_j$ contains $\frac{k}{\ell-1}$ pairs $xy$ where $y\in C_j$, the number of blocks in $\mathcal{B}\backslash\mathcal{B}_j$ containing $x$ is \[\frac{\lambda v}{\ell} \left( \frac{\ell-1}{k} \right) = \frac{\lambda v (\ell-1)}{k\ell}.\] Note that the replication number of $x$ in the BIBD is $\frac{\lambda(v-1)}{k-1}$. Therefore $x$ is contained in exactly \[\frac{\lambda(v-1)}{k-1} - \frac{\lambda v(\ell-1)}{k\ell}\] blocks in $\mathcal{B}_j$. 
\end{proof}

A direct consequence of Lemma~\ref{lem: points in colour class are 'evenly' distributed} is that $\frac{r}{\ell-1}\in \mathbb{Z}$, which we will now show.

\begin{theorem}
    If there exists a $0$-ULSE $\ell$-coloured $(v,k,\lambda)$-BIBD with replication number $r$, then $\frac{r}{\ell-1}\in \mathbb{Z}$.
\end{theorem}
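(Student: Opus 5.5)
Fix a colour $c_j$ and a point $x\in C_i$ with $i\neq j$ (possible since $\ell\geq 4$ by the remarks following Theorem~\ref{thm: 0-ULSE bounds on ell}). By Lemma~\ref{lem: points in colour class are 'evenly' distributed}, the number of blocks of $\mathcal{B}_j$ containing $x$ is the integer
\[
s = \frac{\lambda(v-1)}{k-1} - \frac{\lambda v(\ell-1)}{k\ell} = r - \frac{\lambda v(\ell-1)}{k\ell}.
\]
The quantity $\frac{\lambda v(\ell-1)}{k\ell}$ is the number of blocks through $x$ that meet $C_j$. It is therefore independent of $j\neq i$; call it $m$. The plan is to count the blocks through $x$ in two ways.

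First count. Every block $B$ containing $x$ meets exactly $\ell-2$ colour classes other than $C_i$: the block contains $\ell-1$ colours in total, and $c_i$ is one of them since $x\in B$. Summing over $j\neq i$ the number of blocks through $x$ that meet $C_j$ gives
\[
(\ell-1)m = (\ell-2)r .
\]
This yields $m = \frac{(\ell-2)r}{\ell-1}$, and hence
\[
s = r-m = \frac{r}{\ell-1}.
\]

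Second count. Equivalently, each block through $x$ misses exactly one colour, and that colour is not $c_i$. So the $r$ blocks through $x$ are partitioned according to which $\mathcal{B}_j$ ($j\neq i$) they lie in. This gives $(\ell-1)s = r$ directly.

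Since $s$ counts blocks, it is a nonnegative integer, and so $\frac{r}{\ell-1}=s\in\mathbb{Z}$. Nothing here is a real obstacle. The only points to check are that the count in Lemma~\ref{lem: points in colour class are 'evenly' distributed} does not depend on $j$, and that each block through $x$ lies in exactly one $\mathcal{B}_j$ with $j\neq i$. Both follow from the definition of a $0$-ULSE colouring: exactly one colour is absent from each block, and every other colour appears $\frac{k}{\ell-1}>0$ times.
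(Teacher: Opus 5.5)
Your proposal is correct and is essentially the paper's proof. The paper also uses Lemma~\ref{lem: points in colour class are 'evenly' distributed} to get that each $x\in C_i$ lies in the same number $s=r-\frac{\lambda v(\ell-1)}{k\ell}$ of blocks of $\mathcal{B}_j$ for every $j\neq i$, and then partitions the $r$ blocks through $x$ by their missing colour to obtain $r=(\ell-1)s$, which is your ``second count''; your first count via $(\ell-1)m=(\ell-2)r$ is an equivalent rephrasing, and choosing $i\neq j$ only needs $\ell\geq 3$, since the $\ell\geq 4$ remark covers nontrivial colourings only.
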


\begin{proof}
    Let $\mathcal{D}$ be a $0$-ULSE $\ell$-coloured BIBD with $\mathcal{B}$ denoting the set of blocks of $\mathcal{D}$. Let $i,j \in \{1,\dots, \ell\}$ with $i\neq j$. Let $C_i$ and $C_j$ be two colour classes and let $\mathcal{B}_j = \{B\in \mathcal{B} \mid B\cap C_j = \emptyset\}$.

    From Lemma~\ref{lem: points in colour class are 'evenly' distributed}, each point in $C_i$ occurs in exactly $r - \frac{\lambda v(\ell-1)}{k\ell}$ blocks in $\mathcal{B}_j$. Furthermore, there are $\ell-1$ options for $j$ and there are $r$ blocks in $\mathcal{D}$ containing $x$. Therefore \[r = (\ell-1) \left( r - \frac{\lambda v(\ell-1)}{k\ell} \right)\] and so \[\frac{r}{\ell-1} = r - \frac{\lambda v(\ell-1)}{k\ell}.\] Since $r - \frac{\lambda v(\ell-1)}{k\ell}\in \mathbb{Z}$ by Lemma~\ref{lem: points in colour class are 'evenly' distributed}, $\frac{r}{\ell-1} \in \mathbb{Z}$. 
\end{proof}

In every BIBD that admits a $0$-ULSE colouring, its colour classes induce a design.

\begin{theorem}
    If $\mathcal{D}$ is a $0$-ULSE $\ell$-coloured $(v,k,\lambda)$-BIBD, then there exists a $\left(\frac{v}{\ell}, \frac{k}{\ell-1}, \lambda\right)$-BIBD.
\end{theorem}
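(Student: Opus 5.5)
The natural candidate is the design induced on a single colour class. Fix a colour $c_i$ with colour class $C_i$. By Lemma~\ref{lem: l-1 divides k, colour classes the same size, calculating alpha}, $|C_i| = \frac{v}{\ell}$. Let $\mathcal{B}^{(i)} = \{B \cap C_i \mid B \in \mathcal{B},\ B\cap C_i \neq \emptyset\}$, taken as a multiset. Since the colouring is $0$-ULSE, every block meets $C_i$ in either $0$ or exactly $q = \frac{k}{\ell-1}$ points. Hence every member of $\mathcal{B}^{(i)}$ has size exactly $\frac{k}{\ell-1}$, and there are $\alpha$ of them.

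Next we check the balance condition. Take distinct $x,y \in C_i$. There are exactly $\lambda$ blocks of $\mathcal{D}$ containing $\{x,y\}$. Each such block meets $C_i$, so it contributes a member of $\mathcal{B}^{(i)}$ containing both $x$ and $y$. Conversely, any member of $\mathcal{B}^{(i)}$ containing $x,y$ comes from a block containing $x,y$. The correspondence is bijective as multisets, because we keep one copy of $B\cap C_i$ for each $B$. Thus every pair in $C_i$ lies in exactly $\lambda$ members of $\mathcal{B}^{(i)}$.

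It remains to verify the definitional constraints $2 \le \frac{k}{\ell-1} < \frac{v}{\ell}$, and this is the only place needing care.

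For the lower bound: if the colouring is trivial ($\ell=v$), the claimed design would have one point, which is degenerate. So I would treat the nontrivial case, where Lemma~\ref{lem: k=0 mod l-1, bounds on k} gives $k \ge 2(\ell-1)$, i.e.\ $q\ge 2$. If the trivial case must also be covered, I would note that it forces $v=k+1$ and handle it separately or exclude it.

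For the strict upper bound $q < \frac{v}{\ell}$: equality would mean every block meeting $C_i$ contains all of $C_i$. I would compare using equation~\eqref{eq: formula for |Ci| in terms of krlambda}, which gives $|C_i| = \frac{r}{\lambda}(q-1)+1$. Since $r>\lambda$ (as $v>k$), this yields $|C_i| > q-1+1 = q$ whenever $q \ge 2$.

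Finally, $\lambda \ge 1$ is inherited from $\mathcal{D}$, so $(C_i, \mathcal{B}^{(i)})$ is a $\left(\frac{v}{\ell}, \frac{k}{\ell-1}, \lambda\right)$-BIBD. I expect no real obstacle beyond these degenerate-parameter checks.
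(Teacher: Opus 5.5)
Your proof is correct and is essentially the paper's argument: take a single colour class $C$ of size $\frac{v}{\ell}$, keep the nonempty traces $B\cap C$ (each of size $\frac{k}{\ell-1}$) with multiplicity, and inherit index $\lambda$ from $\mathcal{D}$. Your extra checks that $2\le \frac{k}{\ell-1}<\frac{v}{\ell}$ go beyond the paper's proof, which omits them, and you are right that the trivial case $\ell=v$ (which yields a degenerate one-point ``design'') has to be excluded for the statement to hold as written.
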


\begin{proof}
    Let $B_1, \dots, B_b$ be the blocks of $\mathcal{D}$, let $C$ be one of the colour classes, and let \[\mathcal{B}^\prime = \{C\cap B_i \mid 1\leq i\leq b, C\cap B_i \neq \emptyset\}.\] From Lemma~\ref{lem: l-1 divides k, colour classes the same size, calculating alpha}, $|C| = \frac{v}{\ell}$. Consider the design $(C, \mathcal{B}^\prime)$. This design has $\frac{v}{\ell}$ points and blocks of size $|C\cap B_i| = \frac{k}{\ell-1}$. For any two distinct points $x,y\in C$, the pair $xy$ occurs in exactly $\lambda$ blocks in $\mathcal{D}$. Thus the pair $xy$ occurs exactly $\lambda$ times in $\mathcal{B}$. This proves that $(C,\mathcal{B}^\prime)$ is a $\left(\frac{v}{\ell}, \frac{k}{\ell-1}, \lambda\right)$-BIBD.
\end{proof}

\begin{theorem}\label{thm: symmetric 0-ULSE lambda value}
    If there exists a symmetric $0$-ULSE $\ell$-coloured $(v,k,\lambda)$-BIBD then $\lambda = \frac{k((\ell-1)^2 - k)}{(\ell-1)^2}$ and $\gamma = \frac{v}{\ell}$.
\end{theorem}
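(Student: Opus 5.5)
Since $\mathcal{D}$ is symmetric, $b = v$ and $r = k$, and Lemma~\ref{lem: replication num of a BIBD} gives $\lambda(v-1) = k(k-1)$. We substitute these into Equation~\eqref{eq: formula for |Ci| in terms of krlambda} together with $|C_i| = \frac{v}{\ell}$ from Lemma~\ref{lem: l-1 divides k, colour classes the same size, calculating alpha}, and obtain
\[
\frac{v}{\ell} = \frac{k^2}{\lambda(\ell-1)} - \frac{k}{\lambda} + 1 .
\]
Rearranging gives $\lambda\left(\frac{v}{\ell}-1\right) = \frac{k(k-\ell+1)}{\ell-1}$. Next we use Theorem~\ref{thm: strong necessary condition for 0-LSE ell-col with ell-1 | k} to write
\[
v = \frac{\ell(\ell-2)k}{(\ell-1)^2-k},
\]
so that
\[
\frac{v}{\ell} - 1 = \frac{(\ell-2)k - (\ell-1)^2 + k}{(\ell-1)^2-k} = \frac{(\ell-1)(k-\ell+1)}{(\ell-1)^2-k}.
\]
Since the colouring is nontrivial, $k \geq 2(\ell-1) > \ell-1$, so $k-\ell+1 \neq 0$ and we may cancel it. Solving for $\lambda$ then gives $\lambda = \frac{k((\ell-1)^2-k)}{(\ell-1)^2}$. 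The trivial case $v = k+1$ needs separate handling. There $\ell = v = k+1$, so $(\ell-1)^2 - k = k^2 - k$, and the formula reads $\lambda = k-1$. This agrees with the symmetric $(k+1,k,k-1)$-BIBD, so we can either check it directly or note that the same algebra goes through whenever $k \neq \ell-1$. We will have to handle this degenerate case with care.

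For $\gamma$, we start from Equation~\eqref{eq: k=0 mod l-1, sum gamma = b}, which says $\ell\gamma = b$. Since $b = v$, this gives $\gamma = \frac{v}{\ell}$ immediately. As a sanity check, we can also compute $\gamma = b - \alpha$ with $\alpha = \frac{r|C_i|}{q} = \frac{k\cdot \frac{v}{\ell}}{k/(\ell-1)} = \frac{(\ell-1)v}{\ell}$ from Equation~\eqref{eq: k=0 mod l-1, alpha q = rCi}, which gives the same value.

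The only real obstacle is the algebraic simplification in the $\lambda$ computation and justifying the cancellation of $k-\ell+1$. We will not need the relation $\lambda(v-1) = k(k-1)$ beyond consistency. Alternatively, we could derive $\lambda$ directly from $\lambda(v-1) = k(k-1)$ using the formula for $v$: then $v - 1 = \frac{(\ell-1)^2(k-1)}{(\ell-1)^2-k}$, so $\lambda = \frac{k((\ell-1)^2-k)}{(\ell-1)^2}$ at once. This route is cleaner, avoids any cancellation issue, and covers the trivial case too, so it is the one we will actually use.
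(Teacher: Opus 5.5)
Your proof is correct. The $\lambda$ computation you settle on is the paper's own argument; the $\gamma$ computation takes a different and shorter route. For $\lambda$, you use $\lambda = \frac{k(k-1)}{v-1}$ with $v-1 = \frac{(\ell-1)^2(k-1)}{(\ell-1)^2-k}$ and cancel $k-1 \neq 0$, which is exactly how the paper proceeds.

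For $\gamma$, the paper substitutes $b=v$ and $r=k$ into the explicit formula \eqref{eq: gamma formula}. It then uses the formulas for $v$ and for the newly derived $\lambda$ to simplify down to $\frac{(\ell-2)k}{(\ell-1)^2-k} = \frac{v}{\ell}$. You instead use the counting identity $\ell\gamma = b$, which holds because each block misses exactly one colour. This gives $\gamma = b/\ell$ for every $0$-ULSE colouring, symmetric or not, and hence $\gamma = v/\ell$ as soon as $b = v$.

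Your route avoids the algebra and does not depend on the $\lambda$ value just computed. It also makes clear that symmetry enters only through $b=v$. The paper's longer computation serves mainly as a consistency check on the explicit $\gamma$ formula.

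Your first route through the $|C_i|$ formula, and the discussion of the trivial case it forced, can simply be dropped. The theorem does not assume nontriviality, so the claim $k \geq 2(\ell-1)$ was not available anyway, and your final route covers $v = k+1$ without special handling.
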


\begin{proof}
    In a symmetric BIBD, $k = r = \frac{\lambda(v-1)}{k-1}$. By rearranging and substituting in Equation~\eqref{eq: k=0 mod l-1, soln for v} we obtain 
    \begin{equation}\label{eq: lambda symmetric formula}
    \lambda = \frac{k(k-1)((\ell-1)^2 - k)}{\ell(\ell-2)k - (\ell-1)^2 + k} = \frac{k(k-1)((\ell-1)^2 - k)}{(k-1)(\ell-1)^2} = \frac{k((\ell-1)^2 - k)}{(\ell-1)^2}.
    \end{equation}

    Furthermore, in a symmetric BIBD, $v=b$. Substituting into Equation~\eqref{eq: gamma formula} and then applying Equations~\eqref{eq: k=0 mod l-1, soln for v} and \eqref{eq: lambda symmetric formula} yields
    \begin{align*}
        \gamma &= v - \frac{k^2}{\lambda} + \frac{(\ell-1)k^2}{\lambda k} - \frac{(\ell-1)k}{k} \\
        &= \frac{(\ell-2)k}{(\ell-1)^2 - k} \\
        &= \frac{v}{\ell}. \qedhere
    \end{align*}
\end{proof}

\begin{corollary}\label{cor: bds on lambda in terms of k and ell}
    If there exists a $0$-ULSE $\ell$-coloured $(v,k,\lambda)$-BIBD, then $\lambda \geq \frac{k((\ell-1)^2 - k)}{(\ell-1)^2}$.
\end{corollary}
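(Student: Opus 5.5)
The corollary reduces to two facts: Fisher's Inequality, Lemma~\ref{lem: Fisher's ineq.}, which says $r \geq k$, and the fact that in a $0$-ULSE coloured design $\lambda$ is a nondecreasing function of $r$. That monotonicity comes from Equation~\eqref{eq: k=0 mod l-1, soln for v}. The symmetric case $r=k$ then gives exactly the value computed in Theorem~\ref{thm: symmetric 0-ULSE lambda value}.

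First I express $\lambda$ through $r$. By Lemma~\ref{lem: replication num of a BIBD}, $\lambda = \frac{r(k-1)}{v-1}$. Theorem~\ref{thm: strong necessary condition for 0-LSE ell-col with ell-1 | k} gives $v = \frac{\ell(\ell-2)k}{(\ell-1)^2-k}$, which depends only on $k$ and $\ell$. Hence
\[v - 1 = \frac{\ell(\ell-2)k - (\ell-1)^2 + k}{(\ell-1)^2-k} = \frac{(k-1)(\ell-1)^2}{(\ell-1)^2-k},\]
which is the same simplification used in Equation~\eqref{eq: lambda symmetric formula}. Substituting, I obtain
\[\lambda = \frac{r\big((\ell-1)^2-k\big)}{(\ell-1)^2}.\]

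Next I apply Fisher's Inequality. The factor $(\ell-1)^2-k$ is positive, as shown in the proof of Lemma~\ref{lem: k=0 mod l-1, bounds on k}, because $v>0$. So the expression for $\lambda$ is increasing in $r$, and $r \geq k$ immediately gives
\[\lambda \geq \frac{k\big((\ell-1)^2-k\big)}{(\ell-1)^2}.\]

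In the trivial case $\ell = v$, we have $v = k+1$. Equation~\eqref{eq: k=0 mod l-1, soln for v} still holds there, since the derivation of Theorem~\ref{thm: strong necessary condition for 0-LSE ell-col with ell-1 | k} does not assume nontriviality, so the same computation applies unchanged.

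The only place needing care is the positivity of $(\ell-1)^2-k$, which justifies the direction of the inequality. This follows because the numerator $\ell(\ell-2)k$ in Equation~\eqref{eq: k=0 mod l-1, soln for v} is positive for $\ell \geq 3$ and $v>0$, so the denominator $(\ell-1)^2-k$ must be positive as well.
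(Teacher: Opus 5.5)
Your proposal is correct and is essentially the paper's argument: Fisher's inequality $r \geq k$ combined with the symmetric value of $\lambda$. You make explicit the linear relation $\lambda = \frac{r((\ell-1)^2-k)}{(\ell-1)^2}$ (from Equation~\eqref{eq: k=0 mod l-1, soln for v}) and the positivity of $(\ell-1)^2-k$, both of which the paper's one-line proof leaves implicit.
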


\begin{proof}
    Since $\lambda = \frac{k((\ell-1)^2 - k)}{(\ell-1)^2}$ when the $0$-ULSE $\ell$-coloured $(v,k,\lambda)$-BIBD is symmetric, then choosing $\lambda < \frac{k((\ell-1)^2 - k)}{(\ell-1)^2}$ will contradict Lemma~\ref{lem: Fisher's ineq.}.
\end{proof}

Following a conference presentation given by Kellough in December 2025 on the preliminary contents of this paper, Brett Stevens posed the following question: Is the smallest possible candidate for $\lambda$ in a $0$-ULSE $\ell$-coloured $(v,k,\lambda)$-BIBD always the same as the value for $\lambda$ that would make such a BIBD symmetric? We will answer this question in the affirmative. By Theorem~\ref{thm: symmetric 0-ULSE lambda value}, we know the value of $\lambda$ that makes a $0$-ULSE $\ell$-coloured $(v,k,\lambda)$-BIBD symmetric is $\lambda = \frac{k((\ell-1)^2 - k)}{(\ell-1)^2}$. First, we show that $\frac{k((\ell-1)^2 - k)}{(\ell-1)^2}$ is always a positive integer. 

\begin{lemma}\label{lem: lower bd on lambda is always an int}
    If there exists a $0$-ULSE $\ell$-coloured $(v,k,\lambda)$-BIBD, then $\frac{k((\ell-1)^2 - k)}{(\ell-1)^2} \in \mathbb{Z}^+$.
\end{lemma}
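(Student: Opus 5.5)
Write $k=q(\ell-1)$ with $q=\frac{k}{\ell-1}\in\mathbb{Z}^+$; this is allowed because the colouring is uniform. Then
\[
\frac{k((\ell-1)^2-k)}{(\ell-1)^2}=\frac{q(\ell-1)\big((\ell-1)^2-q(\ell-1)\big)}{(\ell-1)^2}=q(\ell-1-q).
\]
This is visibly an integer, so the whole statement reduces to showing $q(\ell-1-q)>0$. Since $q\geq 1$, it suffices to show $q<\ell-1$, that is, $k<(\ell-1)^2$.

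For the strict inequality I would use Theorem~\ref{thm: strong necessary condition for 0-LSE ell-col with ell-1 | k}. There $v=\frac{\ell(\ell-2)k}{(\ell-1)^2-k}$ is a positive integer, and the numerator $\ell(\ell-2)k$ is positive because $\ell\geq 3$ by Theorem~\ref{thm: nontrivial LSE cols bounds on ell}. In the trivial case the paper shows $v=k+1=\ell$, so again $\ell\geq 3$. A positive $v$ with a positive numerator forces the denominator $(\ell-1)^2-k$ to be positive. Hence $k<(\ell-1)^2$, which gives $q\leq \ell-2$ and $\ell-1-q\geq 1$.

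Alternatively, in the nontrivial case Lemma~\ref{lem: k=0 mod l-1, bounds on k} gives $k\leq(\ell-1)(\ell-2)$ directly, so $q\leq\ell-2$. In the trivial case $k=\ell-1$, so $q=1$ and the value is $\ell-2\geq 1$.

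The only delicate point is making sure $\ell\geq 3$ holds in every case, including the trivial one, so that the sign argument works. The earlier lemma ruling out $0$-LSE $2$-colourings, together with $\ell>1$, covers this.
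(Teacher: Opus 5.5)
Your proof is correct and is essentially the paper's argument. Integrality comes from $\frac{k}{\ell-1}\in\mathbb{Z}$: your factorisation $q(\ell-1-q)$ is the paper's product $\frac{k}{\ell-1}\cdot\frac{(\ell-1)^2-k}{\ell-1}$. Positivity comes from $k\le(\ell-1)(\ell-2)$, which is exactly what the proof of Lemma~\ref{lem: k=0 mod l-1, bounds on k} extracts from Equation~\eqref{eq: k=0 mod l-1, soln for v}. Your separate treatment of the trivial case ($k=\ell-1$, value $\ell-2\geq 1$) is a small improvement, since that lemma is stated only for nontrivial colourings while the present statement is not.
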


\begin{proof}
    By the definition of uniform, $\frac{k}{\ell-1}\in \mathbb{Z}^+$. So $\frac{(\ell-1)^2 - k}{\ell-1}\in \mathbb{Z}$. Since $k\leq (\ell-1)(\ell-2)$ by Lemma~\ref{lem: k=0 mod l-1, bounds on k}, \[\frac{(\ell-1)^2 - k}{\ell-1} \geq \frac{(\ell-1)^2 - (\ell-1)(\ell-2)}{\ell-1} = 1.\] Therefore $\frac{k((\ell-1)^2 - k)}{(\ell-1)^2} \in \mathbb{Z}^+$.
\end{proof}

\begin{theorem}\label{thm: smallest lambda in 0-ULSE colouring}
    Let $v, k, \ell\in \mathbb{Z}^+$ satisfy Equation~\eqref{eq: k=0 mod l-1, soln for v}. Then the smallest index $\lambda_{\min}$ such that Lemmas \ref{lem: replication num of a BIBD} and \ref{lem: num of blocks in a BIBD} are satisfied for a BIBD on $v$ points and block size $k$ with a $0$-ULSE $\ell$-colouring is $\frac{k(k-1)}{v-1} = \frac{k((\ell-1)^2 - k)}{(\ell-1)^2}$. 
\end{theorem}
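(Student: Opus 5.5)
Fix $v,k,\ell$ satisfying Equation~\eqref{eq: k=0 mod l-1, soln for v}, and set $q=\frac{k}{\ell-1}$ and $m=\frac{(\ell-1)^2-k}{\ell-1}=(\ell-1)-q$. By Lemma~\ref{lem: lower bd on lambda is always an int}, $q,m\in\mathbb{Z}^+$. The plan has three steps: identify the value $\lambda_0=\frac{k(k-1)}{v-1}$, prove it is a positive integer equal to $\frac{k((\ell-1)^2-k)}{(\ell-1)^2}=qm$, and then show that every admissible $\lambda$ is a multiple of $\lambda_0$.

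\emph{Step 1: evaluate $v-1$.} Writing $(\ell-1)^2-k=(\ell-1)m$, Equation~\eqref{eq: k=0 mod l-1, soln for v} becomes $v=\frac{\ell(\ell-2)q}{m}$. Then
\[v-1=\frac{\ell(\ell-2)k-(\ell-1)^2+k}{(\ell-1)^2-k}=\frac{(k-1)(\ell-1)^2}{(\ell-1)^2-k}.\]
This is the same simplification already used in Equation~\eqref{eq: lambda symmetric formula}. It gives directly
\[\frac{k(k-1)}{v-1}=\frac{k((\ell-1)^2-k)}{(\ell-1)^2}=qm,\]
which is a positive integer by Lemma~\ref{lem: lower bd on lambda is always an int}.

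\emph{Step 2: divisibility of $\lambda$.} We need that every $\lambda$ making $r$ and $b$ from Lemmas~\ref{lem: replication num of a BIBD} and \ref{lem: num of blocks in a BIBD} integral (together with the integrality the colouring forces) is a multiple of $\lambda_0$. Put $g=\gcd(k(k-1),v-1)$. Integrality of $b=\frac{\lambda v(v-1)}{k(k-1)}$ is weaker than what we need, so instead I will use $r=\frac{\lambda(v-1)}{k-1}$ together with the colouring condition. By Lemma~\ref{lem: l-1 divides k, colour classes the same size, calculating alpha}, $|C_i|=\frac{v}{\ell}$ is an integer. Equation~\eqref{eq: k=0 mod l-1, alpha q = rCi} gives $\alpha=\frac{r|C_i|}{q}$, and Equation~\eqref{eq: gamma formula} together with Theorem~\ref{thm: symmetric 0-ULSE lambda value} give further integrality constraints. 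The cleanest route is probably to show that $\frac{\lambda}{\lambda_0}=\frac{r}{k}$ and then argue that $k\mid r$. From the colouring, the sum over blocks containing $x$ of the sizes of their intersections with its own colour class gives $r(q-1)=\lambda(|C_i|-1)$, which is Equation~\eqref{eq: formula for |Ci| in terms of krlambda} rearranged. Also, Lemma~\ref{lem: points in colour class are 'evenly' distributed} gives $\frac{r}{\ell-1}\in\mathbb{Z}$. I would combine these, using $|C_i|-1=\frac{v}{\ell}-1$ computed from Step 1, to express $\lambda=\frac{r(q-1)}{v/\ell-1}$. The aim is then to show the ratio $\frac{r}{k}$ is forced to be integral.

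\emph{Main obstacle.} Without the colouring, Lemmas~\ref{lem: replication num of a BIBD} and \ref{lem: num of blocks in a BIBD} alone need not force $\lambda_0\mid\lambda$, so the integrality must come from the colour-class structure. A weaker statement is enough, though: we only need $\lambda\ge\lambda_0$ for admissible $\lambda$, not divisibility. That inequality already follows from Corollary~\ref{cor: bds on lambda in terms of k and ell} via Fisher's inequality (Lemma~\ref{lem: Fisher's ineq.}). So the fallback plan is to take the lower bound from Corollary~\ref{cor: bds on lambda in terms of k and ell} and combine it with attainability. At $\lambda=\lambda_0=qm$ we get $r=k$ and $b=v$, both integers. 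We also need the coloured quantities to be integers at $\lambda_0$: $\frac{r}{\ell-1}=q$, $\alpha=(\ell-1)\frac{v}{\ell}$, and $\gamma=\frac{v}{\ell}$, the last by Theorem~\ref{thm: symmetric 0-ULSE lambda value}. Given $\frac{v}{\ell}=\frac{(\ell-2)q}{m}\in\mathbb{Z}$, these make $\lambda_0$ admissible, and hence it is the minimum. The step needing the most care is confirming that $\frac{v}{\ell}$ is an integer, which follows from the hypothesis that $v,\ell$ are integers satisfying Equation~\eqref{eq: k=0 mod l-1, soln for v} together with the colour-class count.
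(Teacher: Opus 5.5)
Your fallback argument is exactly the paper's proof: the lower bound $\lambda\ge\lambda_0$ from Corollary~\ref{cor: bds on lambda in terms of k and ell} (Fisher's inequality), integrality of $\lambda_0=qm$ from Lemma~\ref{lem: lower bd on lambda is always an int}, and the check that $\lambda_0$ gives $r=k$ and $b=v$. Your explicit computation of $v-1$ supplies the ``easily checked'' identity $\frac{k(k-1)}{v-1}=\frac{k((\ell-1)^2-k)}{(\ell-1)^2}$ that the paper leaves to the reader. You should delete Step 2, because the divisibility it aims for is not forced by these integrality conditions: for $(v,k,\ell)=(16,6,4)$ one has $\lambda_0=2$, yet $\lambda=3$ gives integral $r=9$, $b=24$, $\frac{r}{\ell-1}=3$, $\alpha=18$ and $\gamma=6$.
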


\begin{proof}
    It can be easily checked that $\lambda = \frac{k(k-1)}{v-1} = \frac{k((\ell-1)^2 - k)}{(\ell-1)^2}$ satisfies Lemmas \ref{lem: replication num of a BIBD} and \ref{lem: num of blocks in a BIBD} for a $(v,k,\lambda)$-BIBD. Suppose $\lambda_{\min}$ is the smallest integer described in the statement of the theorem. From Corollary~\ref{cor: bds on lambda in terms of k and ell}, $\lambda_{\min} \geq \frac{k((\ell-1)^2 - k)}{(\ell-1)^2}$. From Lemma~\ref{lem: lower bd on lambda is always an int}, the lower bound from Corollary~\ref{cor: bds on lambda in terms of k and ell} is always an integer. Therefore, $\frac{k((\ell-1)^2 - k)}{(\ell-1)^2}$ is the smallest possible value for the index of a $0$-ULSE coloured BIBD. So $\lambda_{\min} = \frac{k((\ell-1)^2 - k)}{(\ell-1)^2}$ as required.
\end{proof}

By the work done in the proof of Lemma~\ref{lem: lower bd on lambda is always an int}, $\frac{(\ell-1)^2 - k}{\ell-1} \in \mathbb{Z}^+$. Furthermore, if $\frac{k}{\ell - 1} = 1$, then a $0$-ULSE $\ell$-colouring of a $(v,k,\lambda)$-BIBD is trivial. Thus, in any nontrivial $0$-ULSE $\ell$-colouring, $\frac{k}{\ell - 1} \geq 2$. Since $\lambda \geq \frac{k((\ell-1)^2 - k)}{(\ell-1)^2}$ in any $0$-ULSE coloured $(v,k,\lambda)$-BIBD by Theorem~\ref{thm: smallest lambda in 0-ULSE colouring}, it follows that $\lambda \geq 2$ in any nontrivial $0$-ULSE coloured BIBD. 

To finish this section, we show that $0$-ULSE colourings are unique in the sense that if a BIBD $\mathcal{D}$ admits a nontrivial $0$-ULSE $\ell$-colouring, then there is no $\ell^\prime \in \mathbb{Z}^+\backslash\{\ell\}$ such that $\mathcal{D}$ also admits a nontrivial $0$-ULSE $\ell^\prime$-colouring. 

\begin{theorem}\label{thm: uniqueness}
    If a $(v,k,\lambda)$-BIBD exists, then there is at most one $\ell\in \mathbb{Z}^+$ such that a nontrivial $0$-ULSE $\ell$-colouring of the BIBD exists. Furthermore, if $\mathcal{D}_1$ is a $(v,k,\lambda_1)$-BIBD with a nontrivial $0$-ULSE $\ell_1$-colouring and $\mathcal{D}_2$ is a $(v,k,\lambda_2)$-BIBD with a nontrivial $0$-ULSE $\ell_2$-colouring, then $\ell_1 = \ell_2$. 
\end{theorem}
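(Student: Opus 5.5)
The plan is to show that Equation~\eqref{eq: k=0 mod l-1, soln for v} determines $\ell$ uniquely from $v$ and $k$. Theorem~\ref{thm: strong necessary condition for 0-LSE ell-col with ell-1 | k} says that any $0$-ULSE $\ell$-colouring of a $(v,k,\lambda)$-BIBD forces
\[v = \frac{\ell(\ell-2)k}{(\ell-1)^2-k}.\]
The right-hand side depends only on $k$ and $\ell$, not on $\lambda$. So it suffices to show that, for fixed $k$, this expression is injective in $\ell$ over the admissible range. That single fact gives both assertions of the theorem at once. Two distinct nontrivial colourings of one design, or colourings of a $(v,k,\lambda_1)$-BIBD and a $(v,k,\lambda_2)$-BIBD, would produce $\ell_1\neq\ell_2$ with the same value of the expression, contradicting injectivity.

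First I will pin down the admissible range of $\ell$. By Theorem~\ref{thm: nontrivial LSE cols bounds on ell} (or Theorem~\ref{thm: 0-ULSE bounds on ell}), a nontrivial colouring has $\ell\ge 3$, so the numerator $\ell(\ell-2)k$ is positive. Since $v>0$, the denominator must also be positive, giving $(\ell-1)^2>k$; this matches the proof of Lemma~\ref{lem: k=0 mod l-1, bounds on k}.

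Next I will substitute $m=(\ell-1)^2$ and use the identity $\ell(\ell-2)=m-1$, which rewrites the equation as
\[v = g(m) := \frac{k(m-1)}{m-k}, \qquad m>k.\]
I will show $g$ is strictly decreasing on $m>k$. Either differentiate to get $g'(m)=\frac{k(1-k)}{(m-k)^2}$, or compare $g(m_1)$ and $g(m_2)$ directly. Cross-multiplying gives
\[(m_1-1)(m_2-k)-(m_2-1)(m_1-k)=(k-1)(m_1-m_2),\]
which is nonzero whenever $m_1\neq m_2$, because $k\ge 2$. Since $m=(\ell-1)^2$ is strictly increasing in $\ell$ for $\ell\ge 1$, the map $\ell\mapsto v$ is strictly monotone, hence injective. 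That finishes both statements.

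There is no serious obstacle here. The only care needed is to check that nontriviality is exactly what licenses the use of Equation~\eqref{eq: k=0 mod l-1, soln for v} with a positive denominator. The trivial case $\ell=v$ is excluded by hypothesis, which is why the statement restricts to nontrivial colourings.
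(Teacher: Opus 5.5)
Your argument is correct and is essentially the paper's: both rely on Equation~\eqref{eq: k=0 mod l-1, soln for v} determining $\ell$ from $v$ and $k$ alone, independently of $\lambda$. The paper solves the quadratic $(v-k)\ell^2-2(v-k)\ell-v(k-1)=0$ and discards the root $1-\frac{\sqrt{k(v-1)(v-k)}}{v-k}\le 0$, while your substitution $m=(\ell-1)^2$ is the same computation in another guise, since solving $v=g(m)$ gives $(\ell-1)^2=\frac{k(v-1)}{v-k}$ directly. Two cosmetic points: the cross-multiplied difference is $(k-1)(m_2-m_1)$, not $(k-1)(m_1-m_2)$, which is harmless because you only need it to be nonzero; and nontriviality is not what makes the denominator positive, since the paper's lemma that $\ell>2$ for every $0$-LSE colouring already gives this.
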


\begin{proof}
    Fix $v,k\in \mathbb{Z}^+$ such that a $(v,k,\lambda)$-BIBD exists for some $\lambda\in \mathbb{Z}^+$. Suppose there exists a $0$-ULSE $\ell$-colouring of this BIBD. By Theorem~\ref{thm: strong necessary condition for 0-LSE ell-col with ell-1 | k}, \[v = \frac{\ell(\ell-2)k}{(\ell-1)^2 - k}.\] Rearranging gives \[(v-k)\ell^2 - 2(v-k)\ell - v(k-1) = 0.\] Applying the quadratic formula and simplifying yields 
    \begin{equation}\label{eq: ell in terms of vk}
        \ell = 1\pm \frac{2\sqrt{k(v-1)(v-k)}}{2(v-k)}.
    \end{equation}
    Since $k\in \mathbb{Z}^+$, \[1 - \frac{2\sqrt{k(v-1)(v-k)}}{2(v-k)} \leq 1 - \frac{2\sqrt{(v-k)(v-k)}}{2(v-k)} = 1-1 = 0\] which is not an admissible number of colours. Therefore we have at most one solution for $\ell$.

    Notably, our proof works independently of the value of $\lambda$. Therefore for any two BIBDs on the same number of points with the same block size, there is at most one $\ell$ such that they each have a $0$-ULSE $\ell$-colouring. In particular, $\mathcal{D}_1$ and $\mathcal{D}_2$ cannot have a $0$-ULSE $\ell_1$-colouring and a $0$-ULSE $\ell_2$-colouring respectively with $\ell_1 \neq \ell_2$. 
\end{proof}

As a consequence of the calculations done in the proof of Theorem~\ref{thm: uniqueness}, we have the following.

\begin{corollary}
    If a $0$-ULSE $\ell$-coloured $(v,k,\lambda)$-BIBD exists, then $k(v-1)(v-k)$ is a perfect square.
\end{corollary}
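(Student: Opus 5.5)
We use Equation~\eqref{eq: ell in terms of vk} from the proof of Theorem~\ref{thm: uniqueness}. Since a $0$-ULSE $\ell$-coloured $(v,k,\lambda)$-BIBD exists, Theorem~\ref{thm: strong necessary condition for 0-LSE ell-col with ell-1 | k} gives $v = \frac{\ell(\ell-2)k}{(\ell-1)^2-k}$. Clearing denominators, this becomes the quadratic $(v-k)\ell^2 - 2(v-k)\ell - v(k-1) = 0$ in $\ell$, and $v-k>0$.

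The cleanest route is to complete the square rather than quote the quadratic formula. Setting $m=\ell-1$, the equation becomes $(v-k)m^2 = (v-k) + v(k-1) = k(v-1)$, since $v - k + vk - v = vk - k$. Hence
\[(v-k)^2(\ell-1)^2 = k(v-1)(v-k).\]
The left-hand side is the square of the integer $(v-k)(\ell-1)$, so $k(v-1)(v-k)$ is a perfect square.

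There is no real obstacle. The only point needing care is the algebra showing that the discriminant simplifies to $4(v-k)\cdot k(v-1)$, and the completed-square form above makes that check immediate. The trivial case $\ell = v$ also fits this argument, so no separate case analysis is needed: the identity holds whenever Equation~\eqref{eq: k=0 mod l-1, soln for v} does.
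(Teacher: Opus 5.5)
Your proof is correct and is essentially the paper's argument: both start from $v = \frac{\ell(\ell-2)k}{(\ell-1)^2-k}$, rearrange it into the quadratic $(v-k)\ell^2 - 2(v-k)\ell - v(k-1) = 0$, and use the integrality of $\ell$. The paper applies the quadratic formula, whereas you complete the square to get $(v-k)^2(\ell-1)^2 = k(v-1)(v-k)$; this is slightly cleaner because it exhibits the integer square root $(v-k)(\ell-1)$ explicitly, a step the paper leaves implicit.
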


\begin{proof}
    Equation~\eqref{eq: ell in terms of vk} gives a formula for $\ell$ in terms of $v$ and $k$. Since $\ell\in \mathbb{Z}^+$, \[1 + \frac{2\sqrt{k(v-1)(v-k)}}{2(v-k)}\] is also an integer. Therefore $k(v-1)(v-k)$ is a perfect square.
\end{proof}

\section{A General Construction}\label{sec: Construction}

In this section we give a framework for constructing $0$-ULSE coloured BIBDs. This framework requires two designs as input ingredients: resolvable BIBDs and transversal designs.

A BIBD $(V, \mathcal{B})$ is \emph{resolvable} if its blocks can be partitioned into sets $\Pi_1, \dots, \Pi_r$ such that for each $1\leq i\leq r$, $\cup_{B \in \Pi_i} B = V$ and $B_1 \cap B_2 = \emptyset$ for any distinct $B_1, B_2 \in \Pi_i$. The sets $\Pi_1, \dots, \Pi_r$ are called \emph{resolution classes}. We abbreviate resolvable BIBD to RBIBD.

If $k,n,\lambda \in \mathbb{Z}^+$ and $n > 2$, then a \emph{transversal design} TD$_\lambda(k,n)$ is a triple $(V, \mathcal{G}, \mathcal{B})$ where $V$ is a set of $kn$ points, $\mathcal{G}$ is a partition of $V$ into $k$ $n$-element subsets called \emph{groups}, $\mathcal{B}$ is a collection of $k$-element subsets, called \emph{blocks}, that contain exactly one point from each group, and for every pair of distinct points $x,y \in V$ that are not in the same group, there are exactly $\lambda$ blocks containing both $x$ and $y$. In particular, no two points in the same group are in the same block. If $\lambda = 1$, then we write TD$(k,n)$ instead of TD$_1(k,n)$. The following result follows from simple counting arguments.

\begin{lemma}\label{lem: r and b for TDs}
    If a TD$_\lambda (k,n)$ exists, then each point occurs in exactly $\lambda n$ blocks and there are exactly $\lambda n^2$ blocks.
\end{lemma}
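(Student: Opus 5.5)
We prove Lemma~\ref{lem: r and b for TDs} by two double-counting arguments, in the same style as Lemmas~\ref{lem: replication num of a BIBD} and \ref{lem: num of blocks in a BIBD}. Let $(V,\mathcal{G},\mathcal{B})$ be a TD$_\lambda(k,n)$.

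\textbf{Replication number.} Fix a point $x\in V$, and let $r_x$ be the number of blocks containing $x$. Count the pairs $(y,B)$ with $y\in V$ not in the group of $x$ and $B\in\mathcal{B}$ a block containing both $x$ and $y$.

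The points not in the group of $x$ lie in the other $k-1$ groups, so there are $(k-1)n$ such points $y$. Each of them lies in exactly $\lambda$ blocks with $x$, which gives $\lambda(k-1)n$ pairs.

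On the other hand, each block $B$ containing $x$ meets every group in exactly one point. Hence $B$ contains exactly $k-1$ points outside the group of $x$, all distinct from $x$. This gives $r_x(k-1)$ pairs.

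Equating the two counts, $r_x(k-1)=\lambda(k-1)n$. Since $k\geq 2$ is implicit in the definition (blocks contain a pair of points from distinct groups), we may cancel $k-1$ and obtain $r_x=\lambda n$, independent of $x$.

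\textbf{Number of blocks.} Count the pairs $(x,B)$ with $x\in B\in\mathcal{B}$. Each of the $kn$ points lies in $\lambda n$ blocks, giving $kn\cdot\lambda n$ pairs. Each block has exactly $k$ points, giving $k|\mathcal{B}|$ pairs. Therefore
\[k|\mathcal{B}| = \lambda k n^2, \qquad\text{so}\qquad |\mathcal{B}|=\lambda n^2.\]

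\textbf{Main obstacle.} The only point needing care is the first count: we must use that a block meets each group in exactly one point. This is what guarantees that every block through $x$ contributes exactly $k-1$ admissible partners $y$, with none of them in the group of $x$.
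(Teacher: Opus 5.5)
Your double-counting proof is correct, and it is the ``simple counting argument'' the paper has in mind: the paper states this lemma without proof and only says it follows from simple counting. Your remark that $k\ge 2$ is needed to cancel $k-1$ is well placed, since for $k=1$ the pair condition is vacuous and the lemma fails. Note, though, that $k\ge 2$ is an implicit convention (the paper always uses $k=\ell-1\ge 3$), not something stated in the paper's definition of a transversal design.
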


Transversal designs and resolvable BIBDs can be used to construct $0$-ULSE coloured BIBDs if the parameters are chosen in a specific way. The following is the main result of this paper.

\begin{theorem}\label{thm: general construction for nice 0-LSE l-coloured BIBDs}
    Let $v,k,\lambda \in \mathbb{Z}^+$ and $\ell\geq 4$. If there exist a TD$_{\frac{\lambda}{\ell-2}}\left(\ell-1, \frac{v(\ell-1)}{k\ell}\right)$ and a $\left(\frac{v}{\ell}, \frac{k}{\ell-1}, \frac{k\ell(\ell-2)}{v(\ell-1)}\right)$-RBIBD, then there exists a $0$-ULSE $\ell$-coloured $(v,k,\lambda)$-BIBD.
\end{theorem}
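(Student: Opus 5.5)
The plan is to build the design directly: take $\ell$ disjoint copies of the RBIBD, one per colour class, and for each colour $j$ use a copy of the transversal design to glue together blocks from the other $\ell-1$ copies into blocks of size $k$ that avoid colour $j$. Write $m=\frac{v}{\ell}$, $q=\frac{k}{\ell-1}$, $n=\frac{v(\ell-1)}{k\ell}=\frac{m}{q}$, $\mu=\frac{k\ell(\ell-2)}{v(\ell-1)}$ and $\lambda'=\frac{\lambda}{\ell-2}$. Each resolution class of the $(m,q,\mu)$-RBIBD consists of exactly $m/q=n$ blocks, which is also the group size of the TD$_{\lambda'}(\ell-1,n)$. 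The construction is built on this coincidence.

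\textbf{Step 1 (parameter bookkeeping).} I will show that the RBIBD has exactly $R=\ell-1$ resolution classes and that $\mu n=\ell-2$. The identity $\mu n=\ell-2$ is immediate from the definitions. For $R=\frac{\mu(m-1)}{q-1}$, I would first derive $v=\frac{\ell(\ell-2)k}{(\ell-1)^2-k}$ from the existence of the RBIBD. This follows from $\mu(m-1)=R(q-1)$, $R=$ replication number, and $b'=Rn$; alternatively, I would use $\frac{\mu m(m-1)}{q(q-1)}=Rn$ together with the replication count. Setting $D=(\ell-1)^2-k$, one gets
\begin{itemize}
\item $\mu=\frac{D}{\ell-1}$,
\item $m-1=\frac{(\ell-1)(k-\ell+1)}{D}$,
\item $q-1=\frac{k-\ell+1}{\ell-1}$,
\end{itemize}
so $R=\ell-1$. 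I expect this algebra, in particular justifying that the relation for $v$ really follows from the hypotheses, to be the main technical obstacle. If it does not follow cleanly, I would fall back on checking the pair counts below with general $R$ and deducing the needed identity from the requirement that the two counts agree.

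\textbf{Step 2 (construction).} Let the colour classes $C_1,\dots,C_\ell$ each carry a copy of the RBIBD, with resolution classes $\Pi^{(i)}_1,\dots,\Pi^{(i)}_{\ell-1}$. For each colour $i$, fix a bijection $\sigma_i$ from $\{1,\dots,\ell\}\setminus\{i\}$ to the resolution classes of copy $i$. For each colour $j$:
\begin{itemize}
\item take a copy of the TD$_{\lambda'}(\ell-1,n)$ whose groups are indexed by the colours $i\neq j$;
\item identify group $i$ with the $n$ blocks of the resolution class $\sigma_i(j)$ of copy $i$;
\item for every TD block, which picks one such RBIBD block for each $i\neq j$, form the union of the chosen blocks.
\end{itemize}
Each resulting set has size $(\ell-1)q=k$, misses colour $j$, and contains every other colour exactly $q$ times. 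Hence, once the BIBD property is verified, the colouring is $0$-ULSE.

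\textbf{Step 3 (pair counts).} Take $x,y$ of the same colour $i$. They lie together in exactly $\mu$ blocks of copy $i$. Each such block lies in a unique resolution class, hence is used for a unique $j=\sigma_i^{-1}(\cdot)$. There it occurs in $\lambda' n$ TD blocks by Lemma~\ref{lem: r and b for TDs}. The total is therefore $\mu\lambda' n=\lambda'(\ell-2)=\lambda$.

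Now take $x\in C_i$ and $y\in C_{i'}$ with $i\neq i'$. The pair can only appear in blocks missing some $j\notin\{i,i'\}$, which gives $\ell-2$ choices. For each such $j$, $x$ lies in exactly one block of the class $\sigma_i(j)$ and $y$ in exactly one block of $\sigma_{i'}(j)$. These are points in distinct groups of the TD, so they occur together in $\lambda'$ TD blocks. The total is $(\ell-2)\lambda'=\lambda$.

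Thus we obtain a $(v,k,\lambda)$-BIBD with the required $0$-ULSE $\ell$-colouring.
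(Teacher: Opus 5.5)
Your Steps 2 and 3 are the paper's construction and pair counting. The paper fixes a cyclic bijection: in $\mathcal{T}_m$ the colour $c_{m+i}$ is glued to resolution class $\Pi_i$, so colour $c_M$ uses $\Pi_{M-m}$ when $c_m$ is the missing colour. These steps are correct provided the RBIBD has exactly $\ell-1$ resolution classes.

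The gap is Step 1: that fact cannot be derived from the hypotheses. The relations you cite, $\mu(m-1)=R(q-1)$ and $b'=Rn$, hold in every RBIBD. Together with $\mu m=q(\ell-2)$ they only give $R=\frac{q(\ell-2)(m-1)}{m(q-1)}$. A short computation shows that $R=\ell-1$ is equivalent to $v=\frac{\ell(\ell-2)k}{(\ell-1)^2-k}$, which is an extra condition.

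The statement in fact fails without it. Take $\ell=5$, $v=30$, $k=8$, $\lambda=3$. The required ingredients are a TD$_1(4,3)$ and a $(6,2,1)$-RBIBD (a one-factorization of $K_6$), and both exist. However, there is no $(30,8,3)$-BIBD, since $r=\frac{3\cdot 29}{7}\notin\mathbb{Z}$. Here $R=5>\ell-1$, so your bijections $\sigma_i$ do not exist, and gluing only four of the five one-factors leaves same-coloured pairs uncovered. Your fallback, deducing the identity from agreement of the two pair counts, is circular: that agreement is what has to be proved, and it fails in this example.

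The paper's own proof has the same hole. It obtains $b_{\mathcal{R}}/s_{\mathcal{R}}=\ell-1$ by substituting Equation~\eqref{eq: k=0 mod l-1, soln for v}. That equation was established only as a necessary condition for a $0$-ULSE coloured BIBD to exist, i.e.\ for the very object being constructed.

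The repair, for you and for the paper, is to add $v=\frac{\ell(\ell-2)k}{(\ell-1)^2-k}$ to the hypotheses. Equivalently, require the RBIBD to have replication number $\ell-1$. This condition holds in every application in the paper (the Hadamard and affine plane families). With it added, your Steps 2 and 3 form a complete proof.
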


In advance of proving Theorem~\ref{thm: general construction for nice 0-LSE l-coloured BIBDs}, we illustrate the proof techniques by constructing a small example: a $0$-ULSE $4$-coloured $(16,6,2)$-BIBD. The existence of such a BIBD proves that the bounds in Theorem~\ref{thm: 0-ULSE bounds on ell} are tight.

We begin with a $(4,3,2)$-BIBD, which is illustrated in Figure~\ref{fig: (4,3,2)-BIBD}. The points in this $(4,3,2)$-BIBD will act as the colours at the end of this construction. Let $D_0$, $D_1$, $D_2$, and $D_3$ be the blocks of the $(4,3,2)$-BIBD where the blocks are labelled from top to bottom in Figure~\ref{fig: (4,3,2)-BIBD}.

\begin{figure}
    \begin{center}
\begin{tikzpicture}
        \tikzstyle{circ}=[circle, very thick, fill=red!25, draw=red, minimum size=15pt,inner sep=0pt]
        \tikzstyle{square} = [draw=PineGreen, fill=green!25, regular polygon, regular polygon sides=4, very thick, minimum size=18pt, inner sep=0pt]
        \tikzstyle{pent} = [draw=blue, fill=blue!25, regular polygon, regular polygon sides=5, very thick, minimum size=15pt, inner sep=0pt]
        \tikzstyle{hex} = [draw=Bittersweet, fill=YellowOrange!25, regular polygon, regular polygon sides=6, very thick, minimum size=15pt, inner sep=0pt]

        \node at (-3.5,-0.75) {Points:};
        \node[circ] at (-5,-1.5) {};
        \node[square] at (-4,-1.5) {};
        \node[pent] at (-3,-1.5) {};
        \node[hex] at (-2,-1.5) {};

        \node at (2,0.75) {Blocks:};

        \node[square, xshift=1cm] at (0,0) {};
        \node[pent, xshift=1cm] at (1,0) {};
        \node[hex, xshift=1cm] at (2,0) {};

        \node[pent, xshift=1cm] at (0,-0.75) {};
        \node[hex, xshift=1cm] at (1,-0.75) {};
        \node[circ, xshift=1cm] at (2,-0.75) {};

        \node[hex, xshift=1cm] at (0,-1.5) {};
        \node[circ, xshift=1cm] at (1,-1.5) {};
        \node[square, xshift=1cm] at (2,-1.5) {};

        \node[circ, xshift=1cm] at (0,-2.25) {};
        \node[square, xshift=1cm] at (1,-2.25) {};
        \node[pent, xshift=1cm] at (2,-2.25) {};

        \node at (0.5,0) {\large $\{$};
        \node at (3.5,0) {\large $\}$};

        \node at (0.5,-0.75) {\large $\{$};
        \node at (3.5,-0.75) {\large $\}$};

        \node at (0.5,-1.5) {\large $\{$};
        \node at (3.5,-1.5) {\large $\}$};

        \node at (0.5,-2.25) {\large $\{$};
        \node at (3.5,-2.25) {\large $\}$};

        \node at (1.5,-0.25) {,};
        \node at (2.5,-0.25) {,};

        \node at (1.5,-1) {,};
        \node at (2.5,-1) {,};

        \node at (1.5,-1.75) {,};
        \node at (2.5,-1.75) {,};

        \node at (1.5,-2.5) {,};
        \node at (2.5,-2.5) {,};
        \end{tikzpicture}
\end{center}
    \caption{A $(4,3,2)$-BIBD.}
    \label{fig: (4,3,2)-BIBD}
\end{figure}

\begin{table}[ht]
    \begin{center}
        \begin{tabular}{c|c|c}
            $\Pi_1$ & $\Pi_2$ & $\Pi_3$ \\
            \hline
            $A_1 = \{0,1\}$ & $A_3 = \{0,2\}$ & $A_5 = \{0,3\}$ \\
            $A_2 = \{2,3\}$ & $A_4 = \{1,3\}$ & $A_6 = \{1,2\}$
        \end{tabular}
    \end{center}
    \caption{A $(4,2,1)$-RBIBD with resolution classes $\Pi_1$, $\Pi_2$, and $\Pi_3$.}
    \label{tab: (4,2,1)-RBIBD}
\end{table}

Next, we consider the $(4,2,1)$-RBIBD with its blocks listed in Table~\ref{tab: (4,2,1)-RBIBD}. Note that the $(4,2,1)$-RBIBD has three resolution classes; this number coincides with the size of each $D_i$. Thus, for each $D_i$, we can assign a resolution class to each point in $D_i$. We will make this assignment based on the order of the elements in each $D_i$ as displayed in Figure~\ref{fig: (4,3,2)-BIBD}. In $D_0$ the square is paired with the resolution class $\Pi_1 = \{A_1,A_2\}$, the pentagon is paired with the resolution class $\Pi_2 = \{A_3,A_4\}$, and the hexagon is paired with the resolution class $\Pi_3 = \{A_5,A_6\}$. We repeat this similarly for the other $D_i$. For every \{shape, resolution class\} pair, we create two new points which are themselves pairs: \{shape, $A_i$\} and \{shape, $A_j$\} where $\{A_i, A_j\}$ is the resolution class. This produces $24$ points which are illustrated in Figure~\ref{fig: BIBD + RBIBD}.

\begin{figure}
    \begin{center}
\begin{tikzpicture}
        \tikzstyle{circ}=[circle, very thick, fill=red!25, draw=red, minimum size=15pt,inner sep=0pt]
        \tikzstyle{square} = [draw=PineGreen, fill=green!25, regular polygon, regular polygon sides=4, very thick, minimum size=15pt, inner sep=0pt]
        \tikzstyle{pent} = [draw=blue, fill=blue!25, regular polygon, regular polygon sides=5, very thick, minimum size=15pt, inner sep=0pt]
        \tikzstyle{hex} = [draw=Bittersweet, fill=YellowOrange!25, regular polygon, regular polygon sides=6, very thick, minimum size=15pt, inner sep=0pt]

        \node[square] at (1,-2) {$A_1$};
        \node[square] at (1,-3) {$A_2$};
        \node[pent] at (3,-2) {$A_3$};
        \node[pent] at (3,-3) {$A_4$};
        \node[hex] at (5,-2) {$A_5$};
        \node[hex] at (5,-3) {$A_6$};

        \node[pent] at (8,-2) {$A_1$};
        \node[pent] at (8,-3) {$A_2$};
        \node[hex] at (10,-2) {$A_3$};
        \node[hex] at (10,-3) {$A_4$};
        \node[circ] at (12,-2) {$A_5$};
        \node[circ] at (12,-3) {$A_6$};

        \node[hex] at (1,-5) {$A_1$};
        \node[hex] at (1,-6) {$A_2$};
        \node[circ] at (3,-5) {$A_3$};
        \node[circ] at (3,-6) {$A_4$};
        \node[square] at (5,-5) {$A_5$};
        \node[square] at (5,-6) {$A_6$};

        \node[circ] at (8,-5) {$A_1$};
        \node[circ] at (8,-6) {$A_2$};
        \node[square] at (10,-5) {$A_3$};
        \node[square] at (10,-6) {$A_4$};
        \node[pent] at (12,-5) {$A_5$};
        \node[pent] at (12,-6) {$A_6$};
        \end{tikzpicture}
\end{center}
    \caption{Combining points from the $(4,3,2)$-BIBD and the $(4,2,1)$-RBIBD.}
    \label{fig: BIBD + RBIBD}
\end{figure}

For each set of six points that are formed from a block of the $(4,3,2)$-BIBD, we form a TD$_1(3,2)$ where each shape gets its own group. We form the blocks of each TD as shown in Figure~\ref{fig: TD_1(3,2)}.

\begin{figure}
    \begin{center}
\begin{tikzpicture}
        \tikzstyle{circ}=[circle, very thick, fill=red!25, draw=red, minimum size=15pt,inner sep=0pt]
        \tikzstyle{square} = [draw=PineGreen, fill=green!25, regular polygon, regular polygon sides=4, very thick, minimum size=15pt, inner sep=0pt]
        \tikzstyle{pent} = [draw=blue, fill=blue!25, regular polygon, regular polygon sides=5, very thick, minimum size=15pt, inner sep=0pt]
        \tikzstyle{hex} = [draw=Bittersweet, fill=YellowOrange!25, regular polygon, regular polygon sides=6, very thick, minimum size=15pt, inner sep=0pt]

        \draw[ultra thick, black, dotted] (0,-2)--(6,-2);
        \draw[ultra thick, red] (0,-1.5)--(3,-3)--(6,-3);
        \draw[ultra thick, cyan, dashed] (0,-3)--(3,-3)--(6,-1.5);
        \draw[ultra thick, orange, dashdotted] (0,-3.5)--(3,-2)--(6,-3.5);

        \node[square] at (1,-2) {$A_1$};
        \node[square] at (1,-3) {$A_2$};
        \node[pent] at (3,-2) {$A_3$};
        \node[pent] at (3,-3) {$A_4$};
        \node[hex] at (5,-2) {$A_5$};
        \node[hex] at (5,-3) {$A_6$};

        \draw[ultra thick, black, dotted] (7,-2)--(13,-2);
        \draw[ultra thick, red] (7,-1.5)--(10,-3)--(13,-3);
        \draw[ultra thick, cyan, dashed] (7,-3)--(10,-3)--(13,-1.5);
        \draw[ultra thick, orange, dashdotted] (7,-3.5)--(10,-2)--(13,-3.5);

        \node[pent] at (8,-2) {$A_1$};
        \node[pent] at (8,-3) {$A_2$};
        \node[hex] at (10,-2) {$A_3$};
        \node[hex] at (10,-3) {$A_4$};
        \node[circ] at (12,-2) {$A_5$};
        \node[circ] at (12,-3) {$A_6$};

        \draw[ultra thick, black, dotted] (0,-5)--(6,-5);
        \draw[ultra thick, red] (0,-4.5)--(3,-6)--(6,-6);
        \draw[ultra thick, cyan, dashed] (0,-6)--(3,-6)--(6,-4.5);
        \draw[ultra thick, orange, dashdotted] (0,-6.5)--(3,-5)--(6,-6.5);

        \node[hex] at (1,-5) {$A_1$};
        \node[hex] at (1,-6) {$A_2$};
        \node[circ] at (3,-5) {$A_3$};
        \node[circ] at (3,-6) {$A_4$};
        \node[square] at (5,-5) {$A_5$};
        \node[square] at (5,-6) {$A_6$};

        \draw[ultra thick, black, dotted] (7,-5)--(13,-5);
        \draw[ultra thick, red] (7,-4.5)--(10,-6)--(13,-6);
        \draw[ultra thick, cyan, dashed] (7,-6)--(10,-6)--(13,-4.5);
        \draw[ultra thick, orange, dashdotted] (7,-6.5)--(10,-5)--(13,-6.5);

        \node[circ] at (8,-5) {$A_1$};
        \node[circ] at (8,-6) {$A_2$};
        \node[square] at (10,-5) {$A_3$};
        \node[square] at (10,-6) {$A_4$};
        \node[pent] at (12,-5) {$A_5$};
        \node[pent] at (12,-6) {$A_6$};
        \end{tikzpicture}
\end{center}
    \caption{Forming transversal designs from the points constructed in Figure~\ref{fig: BIBD + RBIBD}.}
    \label{fig: TD_1(3,2)}
\end{figure}

The construction ends by taking each block in Figure~\ref{fig: TD_1(3,2)} and expanding it into one of size six in the following way. For every block in Figure~\ref{fig: TD_1(3,2)} and every point \{shape, $A_i$\} in the block, we replace \{shape, $A_i$\} with two points: \{shape, $a_1$\} and \{shape, $a_2$\} where $A_i = \{a_1, a_2\}$. The resulting $(16,6,2)$-BIBD is shown in Figure~\ref{fig: 0-ULSE 4-col (16,6,2)-BIBD} and this concludes the example.

\begin{figure}
    \begin{center}
    \begin{tikzpicture}[scale=1]
        \tikzstyle{circ}=[circle, very thick, fill=red!25, draw=red, minimum size=15pt,inner sep=0pt]
        \tikzstyle{square} = [draw=PineGreen, fill=green!25, regular polygon, regular polygon sides=4, very thick, minimum size=15pt, inner sep=0pt]
        \tikzstyle{pent} = [draw=blue, fill=blue!25, regular polygon, regular polygon sides=5, very thick, minimum size=15pt, inner sep=0pt]
        \tikzstyle{hex} = [draw=Bittersweet, fill=YellowOrange!25, regular polygon, regular polygon sides=6, very thick, minimum size=15pt, inner sep=0pt]

        \foreach \x in {-0.5, 6.5}{
            \foreach \y in {0, -1, -2, -3, -5, -6, -7, -8}{
                \draw[ultra thick] (\x, \y)--(\x + 6, \y);
            }
        }

        \node[square] at (0,0) {\Large$0$};
        \node[square] at (1,0) {\Large$1$};
        \node[pent] at (2,0) {\Large$0$};
        \node[pent] at (3,0) {\Large$2$};
        \node[hex] at (4,0) {\Large$0$};
        \node[hex] at (5,0) {\Large$3$};

        \node[square] at (0,-1) {\Large$0$};
        \node[square] at (1,-1) {\Large$1$};
        \node[pent] at (2,-1) {\Large$1$};
        \node[pent] at (3,-1) {\Large$3$};
        \node[hex] at (4,-1) {\Large$1$};
        \node[hex] at (5,-1) {\Large$2$};

        \node[square] at (0,-2) {\Large$2$};
        \node[square] at (1,-2) {\Large$3$};
        \node[pent] at (2,-2) {\Large$1$};
        \node[pent] at (3,-2) {\Large$3$};
        \node[hex] at (4,-2) {\Large$0$};
        \node[hex] at (5,-2) {\Large$3$};

        \node[square] at (0,-3) {\Large$2$};
        \node[square] at (1,-3) {\Large$3$};
        \node[pent] at (2,-3) {\Large$0$};
        \node[pent] at (3,-3) {\Large$2$};
        \node[hex] at (4,-3) {\Large$1$};
        \node[hex] at (5,-3) {\Large$2$};

        \node[pent] at (0,-5) {\Large$0$};
        \node[pent] at (1,-5) {\Large$1$};
        \node[hex] at (2,-5) {\Large$0$};
        \node[hex] at (3,-5) {\Large$2$};
        \node[circ] at (4,-5) {\Large$0$};
        \node[circ] at (5,-5) {\Large$3$};

        \node[pent] at (0,-6) {\Large$0$};
        \node[pent] at (1,-6) {\Large$1$};
        \node[hex] at (2,-6) {\Large$1$};
        \node[hex] at (3,-6) {\Large$3$};
        \node[circ] at (4,-6) {\Large$1$};
        \node[circ] at (5,-6) {\Large$2$};

        \node[pent] at (0,-7) {\Large$2$};
        \node[pent] at (1,-7) {\Large$3$};
        \node[hex] at (2,-7) {\Large$1$};
        \node[hex] at (3,-7) {\Large$3$};
        \node[circ] at (4,-7) {\Large$0$};
        \node[circ] at (5,-7) {\Large$3$};

        \node[pent] at (0,-8) {\Large$2$};
        \node[pent] at (1,-8) {\Large$3$};
        \node[hex] at (2,-8) {\Large$0$};
        \node[hex] at (3,-8) {\Large$2$};
        \node[circ] at (4,-8) {\Large$1$};
        \node[circ] at (5,-8) {\Large$2$};


        \node[hex] at (7,0) {\Large$0$};
        \node[hex] at (8,0) {\Large$1$};
        \node[circ] at (9,0) {\Large$0$};
        \node[circ] at (10,0) {\Large$2$};
        \node[square] at (11,0) {\Large$0$};
        \node[square] at (12,0) {\Large$3$};

        \node[hex] at (7,-1) {\Large$0$};
        \node[hex] at (8,-1) {\Large$1$};
        \node[circ] at (9,-1) {\Large$1$};
        \node[circ] at (10,-1) {\Large$3$};
        \node[square] at (11,-1) {\Large$1$};
        \node[square] at (12,-1) {\Large$2$};

        \node[hex] at (7,-2) {\Large$2$};
        \node[hex] at (8,-2) {\Large$3$};
        \node[circ] at (9,-2) {\Large$1$};
        \node[circ] at (10,-2) {\Large$3$};
        \node[square] at (11,-2) {\Large$0$};
        \node[square] at (12,-2) {\Large$3$};

        \node[hex] at (7,-3) {\Large$2$};
        \node[hex] at (8,-3) {\Large$3$};
        \node[circ] at (9,-3) {\Large$0$};
        \node[circ] at (10,-3) {\Large$2$};
        \node[square] at (11,-3) {\Large$1$};
        \node[square] at (12,-3) {\Large$2$};

        \node[circ] at (7,-5) {\Large$0$};
        \node[circ] at (8,-5) {\Large$1$};
        \node[square] at (9,-5) {\Large$0$};
        \node[square] at (10,-5) {\Large$2$};
        \node[pent] at (11,-5) {\Large$0$};
        \node[pent] at (12,-5) {\Large$3$};

        \node[circ] at (7,-6) {\Large$0$};
        \node[circ] at (8,-6) {\Large$1$};
        \node[square] at (9,-6) {\Large$1$};
        \node[square] at (10,-6) {\Large$3$};
        \node[pent] at (11,-6) {\Large$1$};
        \node[pent] at (12,-6) {\Large$2$};

        \node[circ] at (7,-7) {\Large$2$};
        \node[circ] at (8,-7) {\Large$3$};
        \node[square] at (9,-7) {\Large$1$};
        \node[square] at (10,-7) {\Large$3$};
        \node[pent] at (11,-7) {\Large$0$};
        \node[pent] at (12,-7) {\Large$3$};

        \node[circ] at (7,-8) {\Large$2$};
        \node[circ] at (8,-8) {\Large$3$};
        \node[square] at (9,-8) {\Large$0$};
        \node[square] at (10,-8) {\Large$2$};
        \node[pent] at (11,-8) {\Large$1$};
        \node[pent] at (12,-8) {\Large$2$};
    \end{tikzpicture}
\end{center}
    \caption{The $16$ blocks of a $0$-ULSE $4$-coloured $(16,6,2)$-BIBD.}
    \label{fig: 0-ULSE 4-col (16,6,2)-BIBD}
\end{figure}

We are now ready to prove Theorem~\ref{thm: general construction for nice 0-LSE l-coloured BIBDs}.

\begin{proof}[Proof of Theorem~\ref{thm: general construction for nice 0-LSE l-coloured BIBDs}]
    Let $P =\{c_0, \dots, c_{\ell-1}\}$ be a set of points and for each $0\leq m\leq \ell-1$, let $E_m=P\backslash\{c_m\}$. Note that the set of blocks $\{E_0, \dots, E_{\ell-1}\}$ form an $(\ell, \ell-1, \ell-2)$-BIBD which we will call $\mathcal{E}$.

    Let $\mathcal{R}$ be a $\left(\frac{v}{\ell}, \frac{k}{\ell-1}, \frac{k\ell(\ell-2)}{v(\ell-1)}\right)$-RBIBD. Then $\mathcal{R}$ contains \[b_{\mathcal{R}} := \frac{k\ell(\ell-2)}{v(\ell-1)}\left( \frac{\frac{v}{\ell}\left( \frac{v}{\ell} - 1\right)}{\frac{k}{\ell-1}\left( \frac{k}{\ell-1} - 1 \right)} \right) = \frac{(v-\ell)(\ell-1)(\ell-2)}{\ell(k-\ell+1)}\] blocks, resolution classes of size \[s_{\mathcal{R}} := \frac{v(\ell-1)}{k\ell},\] and $\frac{b_{\mathcal{R}}}{s_{\mathcal{R}}}$ resolution classes. If we substitute Equation~\eqref{eq: k=0 mod l-1, soln for v} into the above values for $b_{\mathcal{R}}$ and $s_{\mathcal{R}}$ we obtain \[b_{\mathcal{R}} = \frac{(\ell-1)^2(\ell-2)}{(\ell-1)^2 - k}\] and \[s_{\mathcal{R}} = \frac{(\ell-1)(\ell-2)}{(\ell-1)^2 - k}.\] Consequently, \[\frac{b_{\mathcal{R}}}{s_{\mathcal{R}}} = \ell -1.\]
    
    Let $\mathcal{A} = \{A_1, \dots, A_{b_{\mathcal{R}}}\}$ be the blocks of $\mathcal{R}$. Without loss of generality, assume that $\{A_i, A_{i+1}, \dots, A_{i + s_{\mathcal{R}} - 1}\}$ is a resolution class for each $i\equiv 1 \pmod{s_{\mathcal{R}}}$. For each $j\equiv i \pmod{s_{\mathcal{R}}}$ where $1\leq i\leq s_{\mathcal{R}} - 1$, let $A_j = \{a_j^{(i-1)\frac{k}{\ell-1} + 1}, \dots, a_j^{i\frac{k}{\ell-1}}\}$. For $j\equiv 0 \pmod{s_{\mathcal{R}}}$, let $A_j = \{a_j^{(s_{\mathcal{R}}-1)\frac{k}{\ell-1} + 1},\dots, a_j^{\frac{v}{\ell}}\}$. Note that if $a_{j_1}^{q_1} \in A_{j_1}$ and  $a_{j_2}^{q_2} \in A_{j_2}$, then it may be true that $a_{j_1}^{q_1} = a_{j_2}^{q_2}$ since blocks in $\mathcal{R}$ are not necessarily disjoint. In the case where $A_{j_1}$ and $A_{j_2}$ are in the same resolution class, then $a_{j_1}^{q_1} \neq a_{j_2}^{q_2}$.

    For each $0\leq m\leq \ell-1$, let $\mathcal{T}_m$ be a TD$_{\frac{\lambda}{\ell-2}}\left(\ell-1, \frac{v(\ell-1)}{k\ell}\right)$ constructed from the set of points \[\left\{ (c_{m+i},A_{(i-1)s_{\mathcal{R}} + j}) \mid 1\leq i \leq \ell-1, 1\leq j\leq s_{\mathcal{R}} \right\}\] with groups of the form $\{(c_{m+i},A_{(i-1)s_{\mathcal{R}} + 1}), \dots, (c_{m+i},A_{is_{\mathcal{R}}})\}$ where $1\leq i\leq \ell-1$ and addition in the subscripts of each $c_{m+i}$ is done modulo $\ell$. Note that the first coordinates of the points of $\mathcal{T}_m$ are precisely the points of $E_m$. From Lemma~\ref{lem: r and b for TDs} we know that such a design contains exactly $\frac{\lambda}{\ell-2}\left( \frac{v(\ell-1)}{k\ell} \right)^2 = \frac{\lambda s_{\mathcal{R}}^2}{\ell-2}$ blocks. For each $0\leq m\leq \ell-1$, let $\mathcal{B}_m$ be the set of $\frac{\lambda s_{\mathcal{R}}^2}{\ell-2}$ blocks of $\mathcal{T}_m$. 

    Now, for each $0\leq m\leq \ell-1$ and for each $B\in \mathcal{B}_m$, say $B = \{(c_{m+1},A_{j_0}), \dots, (c_{m + \ell - 1},A_{j_{\ell -2}})\}$, define 
    \begin{align*}
        B^\prime = \Big\{ &(c_{m+1}, a_{j_0}^{q_0}), \dots, (c_{m+1},a_{j_0}^{q_0 + \frac{k}{\ell-1} - 1}),  \\
        &(c_{m+2}, a_{j_1}^{q_1}), \dots, (c_{m+2},a_{j_1}^{q_1 + \frac{k}{\ell-1} - 1}), \\
        &\vdots\\
        &(c_{m+\ell-1}, a_{j_{\ell-2}}^{q_{\ell-2}}), \dots, (c_{m+\ell-1},a_{j_{\ell-2}}^{q_{\ell-2} + \frac{k}{\ell-1} - 1})\Big\}
    \end{align*}
    where for each $1\leq p\leq s_{\mathcal{R}} - 1$ and $j_t \equiv p \pmod{s_{\mathcal{R}}}$, $q_t = (p-1)\frac{k}{\ell-1}+1$, and for $j_t \equiv 0 \pmod{s_{\mathcal{R}}}$, $q_t = \left( s_\mathcal{R} - 1 \right)\frac{k}{\ell-1} + 1$. Let \[\mathcal{B}^\prime_m = \{B^\prime \mid B\in \mathcal{B}_m\}\] and let \[\mathcal{B}^\prime = \bigcup_{0\leq m\leq \ell-1} \mathcal{B}^\prime_m.\] Let \[X = \left\{ (c_i,a_j^q) \mid 0\leq i\leq \ell-1, a_j^q \text{ is a point in } \mathcal{R} \right\}.\] We claim that $(X, \mathcal{B}^\prime)$ is a $0$-ULSE $\ell$-colourable $(v,k,\lambda)$-BIBD.

    For any $(c_i,a_j^q) \in X$, there are $\ell$ options for $c_i$ and $\frac{v}{\ell}$ options for $a_j^q$. So $|X| = (\ell)\frac{v}{\ell} = v$. Each block contains $(\ell-1)(\frac{k}{\ell-1}) = k$ points. 
    
    Now, consider two distinct points $(c_{m_1},a_{j_1}^{q_1})$ and $(c_{m_2},a_{j_2}^{q_2})$. Suppose first, $m_1 \neq m_2$. There are $\ell-2$ blocks in $\mathcal{E}$ containing the pair $c_{m_1} c_{m_2}$. For any $0\leq m\leq \ell-1$ such that $E_m$ contains the pair $c_{m_1} c_{m_2}$, the transversal design $\mathcal{T}_m$ has index $\frac{\lambda}{\ell-2}$. Thus, the pair of points $(c_{m_1}, A_{j_1})$ and $(c_{m_2}, A_{j_2})$ appears in exactly $\frac{\lambda}{\ell-2}$ blocks of $\mathcal{T}_m$. Recall that the set of groups of $\mathcal{T}_m$ is \[\left\{ \{(c_{m+i}, A_{(i-1)s_{\mathcal{R}}+1}), (c_{m+i}, A_{(i-1)s_{\mathcal{R}}+2}) \dots, (c_{m+i}, A_{is_{\mathcal{R}}})\} \mid 1\leq i\leq \ell-1 \right\}\] where for each $1\leq i\leq \frac{b_\mathcal{R}}{s_\mathcal{R}} = \ell - 1$, $\{A_{(i-1)s_{\mathcal{R}}+1}, A_{(i-1)s_{\mathcal{R}}+2}, \dots, A_{is_{\mathcal{R}}}\}$ is a resolution class of $\mathcal{R}$. Therefore, for every block $B \in \mathcal{B}_m$ containing the pair $(c_{m_1},A_{j_1})(c_{m_2},A_{j_2})$, the pair $(c_{m_1}, a_{j_1}^{q_1})(c_{m_2}, a_{j_2}^{q_2})$ appears exactly once in the block $B^\prime \in \mathcal{B}_m^\prime$. Furthermore, for any block $B\in \mathcal{B}_m$ not containing the pair $(c_{m_1},A_{j_1})(c_{m_2},A_{j_2})$, the block $B^\prime \in \mathcal{B}_m^\prime$ does not contain the pair $(c_{m_1},a_{j_1}^{q_1}) (c_{m_2},a_{j_2}^{q_2})$. So the pair $(c_{m_1},a_{j_1}^{q_1}) (c_{m_2},a_{j_2}^{q_2})$ appears in exactly $\frac{\lambda}{\ell - 2}$ blocks in $\mathcal{B}_m^\prime$. Consequently, the pair $(c_{m_1},a_{j_1}^{q_1}) (c_{m_2},a_{j_2}^{q_2})$ appears in exactly $(\ell-2) \frac{\lambda}{(\ell-2)} = \lambda$ blocks in $\mathcal{B}^\prime$. 

    Suppose instead that $m_1 = m_2 = M$ for some $0\leq M\leq \ell-1$. Then, for $(c_{M},a_{j_1}^{q_1})$ and $(c_{M},a_{j_2}^{q_2})$ to be distinct, we must have $a_{j_1}^{q_1} \neq a_{j_2}^{q_2}$. For each $0\leq m\leq \ell-1$, $\mathcal{T}_m$ contains blocks of the form \[B = \{(c_{m+1}, A_{j_0}), (c_{m+2}, A_{j_1}), \dots, (c_{m+\ell-1}, A_{j_{\ell-2}})\}.\] Thus, there is no point in $\mathcal{T}_M$ that has $c_M$ as its first coordinate, and so the pair $(c_{M},a_{j_1}^{q_1})(c_{M},a_{j_2}^{q_2})$ does not appear in any block of $\mathcal{T}_M$. Fix $0\leq m\leq \ell - 1$ such that $m\neq M$. If $B$ is a block in $\mathcal{T}_m$, then $B$ contains exactly one of $(c_M, A_1)$, $(c_M, A_2)$, \dots, $(c_M, A_{b_\mathcal{R}})$. Thus $B^\prime$ contains exactly one of the following subsets:
    \begin{align*}
        &\left\{(c_M,a_1^1), \dots, (c_M,a_1^{\frac{k}{\ell-2}})\right\},\\
        &\left\{(c_M,a_2^{\frac{k}{\ell-2} + 1}), \dots, (c_M,a_2^{(2)\frac{k}{\ell-2}})\right\},\\
        &\vdots\\
        &\left\{(c_M,a_{b_{\mathcal{R}}}^{(s_{\mathcal{R}} - 1)\frac{k}{\ell-2} + 1}), \dots, (c_M,a_{b_{\mathcal{R}}}^{\frac{v}{\ell}})\right\}.
    \end{align*}

    Recall that the pair $a_{j_1}^{q_1} a_{j_2}^{q_2}$ appears in exactly $\frac{k\ell(\ell-2)}{v(\ell-1)}$ blocks of $\mathcal{R}$. So, out of the above subsets, there are $\frac{k\ell(\ell-2)}{v(\ell-1)}$ subsets that contain the pair $(c_M, a_{j_1}^{q_1}) (c_M, a_{j_2}^{q_2})$. Furthermore, for any point $(c_M, A_j)$ in $\mathcal{T}_m$ such that $a_{j_1}^{q_1}, a_{j_2}^{q_2}\in A_j$, $(c_M, A_j)$ occurs in exactly $\frac{\lambda v (\ell-1)}{k\ell(\ell-2)}$ blocks by Lemma~\ref{lem: r and b for TDs}. Thus, the pair $(c_{M},a_{j_1}^{q_1}) (c_{M},a_{j_2}^{q_2})$ is contained in exactly \[\left(\frac{k\ell(\ell-2)}{v(\ell-1)}\right)\left( \frac{\lambda v (\ell-1)}{k\ell(\ell-2)} \right) = \lambda\] blocks. 

    Therefore, we have constructed a $(v,k,\lambda)$-BIBD. To see that this BIBD admits a $0$-ULSE $\ell$-colouring, treat $\{c_0, \dots, c_{\ell-1}\}$ as a set of colours. For each point of the form $(c_i,a_j^q) \in X$, assign it the colour $c_i$. Then every block will contain exactly $\ell-1$ colours with each of these colours being represented exactly $\frac{k}{\ell-1}$ times. 
\end{proof}

To further illustrate how the proof of Theorem~\ref{thm: general construction for nice 0-LSE l-coloured BIBDs} works, we now give an example of how to construct a $0$-ULSE $5$-coloured $(45,12,3)$-BIBD, using the notation from the theorem's proof. Let $P = \{c_0, c_1, c_2, c_3, c_4\}$ and $E_m = P\backslash \{c_m\}$ for each $m\in \{0,1,2,3,4\}$. Let $\mathcal{R}$ be the $(9,3,1)$-RBIBD shown in Table~\ref{tab: (9,3,1)-RBIBD}.

\begin{table}[ht]
    \centering
        \begin{tabular}{c|c|c|c}
            $\Pi_1$ & $\Pi_2$ & $\Pi_3$ & $\Pi_4$ \\
            \hline
            $A_1 = \{0,1,2\}$ & $A_4 = \{0,3,6\}$ & $A_7 = \{0,4,8\}$ & $A_{10} = \{0,5,7\}$\\
            $A_2 = \{3,4,5\}$ & $A_5 = \{1,4,7\}$ & $A_8 = \{1,5,6\}$ & $A_{11} = \{1,3,8\}$\\
            $A_3 = \{6,7,8\}$ & $A_6 = \{2,5,8\}$ & $A_9 = \{2,3,7\}$ & $A_{12} = \{2,4,6\}$
        \end{tabular}
    \caption{A $(9,3,1)$-RBIBD on the point set $V = \{0,1,2,3,4,5,6,7,8\}$.}
    \label{tab: (9,3,1)-RBIBD}
\end{table}

For each $m \in \{0,1,2,3,4\}$, we construct the TD$_1(4,3)$ $\mathcal{T}_m$ using the following set of points:
\begin{align*}
    \{& (c_{m+1}, A_1), (c_{m+1}, A_2), (c_{m+1}, A_3), \\
    &(c_{m+2}, A_4), (c_{m+2}, A_5), (c_{m+2}, A_6), \\
    &(c_{m+3}, A_7), (c_{m+3}, A_8), (c_{m+3}, A_9), \\
    &(c_{m+4}, A_{10}), (c_{m+4}, A_{11}), (c_{m+4}, A_{12})\}
\end{align*}
where addition in the subscripts is done modulo $5$. The blocks $\mathcal{B}_m$ of $\mathcal{T}_m$ are illustrated in Figure~\ref{fig: blocks of T_m}.

\begin{figure}
    \centering
    \begin{tikzpicture}[scale = 2]
        \tikzstyle{vertex}=[circle, draw=black, fill=white, very thick, minimum size=60pt,inner sep=0pt]
            \node[vertex] at (0,0) (0) {$(c_{m+1}, A_1)$};
            \node[vertex] at (0,-2) (1) {$(c_{m+1}, A_2)$};
            \node[vertex] at (0,-4) (2) {$(c_{m+1}, A_3)$};
            \node[vertex] at (2,0) (3) {$(c_{m+2}, A_4)$};
            \node[vertex] at (2,-2) (4) {$(c_{m+2}, A_5)$};
            \node[vertex] at (2,-4) (5) {$(c_{m+2}, A_6)$};
            \node[vertex] at (4,0) (6) {$(c_{m+3}, A_7)$};
            \node[vertex] at (4,-2) (7) {$(c_{m+3}, A_8)$};
            \node[vertex] at (4,-4) (8) {$(c_{m+3}, A_9)$};
            \node[vertex] at (6,0) (9) {$(c_{m+4}, A_{10})$};
            \node[vertex] at (6,-2) (10) {$(c_{m+4}, A_{11})$};
            \node[vertex] at (6,-4) (11) {$(c_{m+4}, A_{12})$};

            \draw[ultra thick] (0)--(3)--(6)--(9);
            \draw[ultra thick] (0)--(4)--(7)--(10);
            \draw[ultra thick] (0)--(5)--(8)--(11);

            \draw[ultra thick, cyan, dotted] (1)--(3)--(7)--(11);
            \draw[ultra thick, cyan, dotted] (1)--(4)--(8)--(9);
            \draw[ultra thick, cyan, dotted] (1)--(5)--(6)--(10);

            \draw[ultra thick, red, dashed] (2)--(3)--(8)--(10);
            \draw[ultra thick, red, dashed] (2)--(4)--(6)--(11);
            \draw[ultra thick, red, dashed] (2)--(5)--(7)--(9);
        \end{tikzpicture}
    \caption{The set of blocks $\mathcal{B}_m$ for the TD$_1(4,3)$, $\mathcal{T}_m$.}
    \label{fig: blocks of T_m}
\end{figure}

We now construct $\mathcal{B}^\prime_m$ as follows. Let $B \in \mathcal{B}_m$; for example, \[B = \{(c_{m+1},A_1), (c_{m+2},A_{4}), (c_{m+3},A_{7}), (c_{m+4},A_{10})\}.\] We let $B^\prime = \{(c_{m+i}, a) \mid 1\leq i \leq 4, a\in A_1 \cup A_4 \cup A_7 \cup A_{10}, (c_{m+i}, A_j) \in B\}$. Listing out all of the elements yields
\begin{align*}
    B^\prime = \{&(c_{m+1},0), (c_{m+1},1), (c_{m+1},2), \\ 
    &(c_{m+2},0),(c_{m+2},3), (c_{m+2},6), \\
    &(c_{m+3},0), (c_{m+3},4), (c_{m+3},8), \\
    &(c_{m+4},0), (c_{m+4},5), (c_{m+4},7)\}.
\end{align*}
We repeat this transformation for every block in $\mathcal{B}_m$ and the set of all of these transformed blocks gives us $\mathcal{B}^\prime_m$. This concludes the construction of the $0$-ULSE $5$-coloured $(45,12,3)$-BIBD.

For the rest of this section, we work towards an alternate version of Theorem~\ref{thm: general construction for nice 0-LSE l-coloured BIBDs} that establishes a way of constructing symmetric $0$-ULSE coloured BIBDs.

Applying Theorem~\ref{thm: symmetric 0-ULSE lambda value} and substituting Equation~\eqref{eq: k=0 mod l-1, soln for v} into the parameters for the input ingredient RBIBD in Theorem~\ref{thm: general construction for nice 0-LSE l-coloured BIBDs} yields the parameters \[\left(\frac{(\ell-2)k}{(\ell-1)^2 - k}, \frac{k}{\ell-1}, \frac{(\ell-1)^2 - k}{\ell-1}\right).\] Observe that a $\left(\frac{(\ell-2)k}{(\ell-1)^2 - k}, \frac{k}{\ell-1}, \frac{(\ell-1)^2 - k}{\ell-1}\right)$-RBIBD satisfies the property that its replication number is equal to the sum of its block size and its index. Any RBIBD that admits this property is called \emph{affine resolvable}. The following lemma concerning the intersection of blocks in affine resolvable BIBDs was proven by Bose \cite{B42}.

\begin{lemma}\label{lem: affine resolvable intersection of blocks}
    Let $\mathcal{D}$ be an affine resolvable $(v,k,\lambda)$-BIBD. If $B_1$ and $B_2$ are two blocks of $\mathcal{D}$ in different resolution classes then $|B_1 \cap B_2| = \frac{k^2}{v}$.
\end{lemma}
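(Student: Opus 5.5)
We plan to prove Bose's lemma by the standard variance (counting) argument, carried out relative to a fixed block. Let $\mathcal{D}$ be affine resolvable with parameters $(v,k,\lambda)$, replication number $r = k+\lambda$, and resolution classes of size $m = v/k$, so that there are $r$ classes. Fix a block $B_1$ lying in some class $\Pi$. For each block $B$ not in $\Pi$, put $x_B = |B\cap B_1|$. Our aim is to compute $\sum x_B$ and $\sum x_B^2$, and then show that the variance of the $x_B$ is zero.

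\textbf{First moment.} Double count the incidences $(p,B)$ with $p\in B_1$ and $B\notin\Pi$. Each point of $B_1$ lies in $r-1$ blocks outside $\Pi$, since its unique block in $\Pi$ is $B_1$ itself. Hence $\sum x_B = k(r-1)$.

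\textbf{Second moment.} Count the triples $(p,q,B)$ with $p\neq q$ both in $B_1$ and $B\notin\Pi$. Each pair in $B_1$ lies in $\lambda$ blocks. Exactly one of these is $B_1$, and no other block of $\Pi$ can contain it because the blocks of $\Pi$ are disjoint. So $\sum x_B(x_B-1) = k(k-1)(\lambda-1)$. The number of blocks outside $\Pi$ is $N=(r-1)m$. Each other class partitions $B_1$, so its intersection sizes with $B_1$ sum to $k$ over $m$ blocks, and the mean is $\bar x = k(r-1)/N = k/m = k^2/v$.

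\textbf{Variance.} We then verify that $\sum x_B^2 = N\bar x^2$, that is,
\[
k(k-1)(\lambda-1)+k(r-1) = (r-1)\frac{k^3}{v}.
\]
This step uses $r=k+\lambda$ together with $\lambda(v-1)=r(k-1)$. Substituting $r=k+\lambda$ into the latter gives $\lambda(v-k) = k(k-1)$, so $v = k(k+\lambda-1)/\lambda$. With this, the left side simplifies to $k\bigl[(k-1)(\lambda-1)+k+\lambda-1\bigr]=k^2\lambda$. The right side equals $(k+\lambda-1)k^3\lambda/\bigl(k(k+\lambda-1)\bigr)=k^2\lambda$, so the two sides agree. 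Hence $\sum (x_B-\bar x)^2=0$, and every block outside $\Pi$ meets $B_1$ in exactly $k^2/v$ points.

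The main obstacle is this algebraic identity, where the affine condition $r=k+\lambda$ is essential. Without it the variance is generally positive and the intersection sizes are not constant. The remaining steps are routine double counting.
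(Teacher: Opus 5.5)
Your proof is correct and complete. Both double counts are right: the disjointness of the blocks of $\Pi$ is exactly what gives $\sum x_B = k(r-1)$ and $\sum x_B(x_B-1)=k(k-1)(\lambda-1)$. Both sides of your identity do reduce to $k^2\lambda$.

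The paper does not prove this lemma; it simply cites Bose for it. Your second-moment argument therefore supplies a self-contained, elementary justification where the paper relies on the literature.

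Your method also gives more than the lemma. If you run the same computation without imposing $r=k+\lambda$, using only $\lambda(v-1)=r(k-1)$, the variance comes out as
\[
\sum_{B\notin\Pi}(x_B-\bar x)^2=\frac{k(k-1)(r-\lambda)(r-\lambda-k)}{\lambda v}.
\]
So the same two counts prove Bose's inequality $r\ge k+\lambda$ (equivalently $b\ge v+r-1$) for every resolvable BIBD. They also show that constant intersection $k^2/v$ between blocks of different classes holds exactly in the affine case. The citation is shorter, but your route makes the lemma and its converse checkable directly from the definitions.
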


\begin{lemma}\label{lem: ingredient RBIBD is an affine resolvable BIBD}
    If $k,\ell\in \mathbb{Z}^+$ and $\ell\geq 4$, then an $\left(\frac{(\ell-2)k}{(\ell-1)^2 - k}, \frac{k}{\ell-1}, \frac{(\ell-1)^2 - k}{\ell-1}\right)$-RBIBD is affine resolvable. Furthermore, for any two blocks $B_i$ and $B_j$ in different resolution classes, \[|B_i \cap B_j| = \frac{k((\ell-1)^2 - k)}{(\ell-1)^2(\ell-2)}.\]
\end{lemma}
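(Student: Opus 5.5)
Write $V' = \frac{(\ell-2)k}{(\ell-1)^2-k}$, $K' = \frac{k}{\ell-1}$ and $\Lambda' = \frac{(\ell-1)^2-k}{\ell-1}$ for the parameters of the RBIBD. The plan is to compute its replication number $R'$ from Lemma~\ref{lem: replication num of a BIBD} and check directly that $R' = K' + \Lambda'$, which is the definition of affine resolvable. Then Lemma~\ref{lem: affine resolvable intersection of blocks} applies, and the intersection size is $K'^2/V'$, which we simplify.

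\textbf{Computing $R'$.} By Lemma~\ref{lem: replication num of a BIBD}, $R' = \Lambda'(V'-1)/(K'-1)$. We need two differences:
\[
V' - 1 = \frac{(\ell-2)k - (\ell-1)^2 + k}{(\ell-1)^2 - k} = \frac{(\ell-1)(k - \ell + 1)}{(\ell-1)^2 - k},
\]
using $(\ell-2)k + k = (\ell-1)k$, and
\[
K' - 1 = \frac{k - \ell + 1}{\ell - 1}.
\]
Substituting these into the formula for $R'$, the factor $(\ell-1)^2 - k$ cancels against $\Lambda'$, and the factor $k - \ell + 1$ cancels between numerator and denominator. This leaves $R' = \ell - 1$. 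Meanwhile
\[
K' + \Lambda' = \frac{k + (\ell-1)^2 - k}{\ell - 1} = \ell - 1,
\]
so $R' = K' + \Lambda'$ and the design is affine resolvable.

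The cancellation of $k - \ell + 1$ needs $k \neq \ell - 1$, i.e.\ $K' \ne 1$. This is the only delicate point. If $K' = 1$, the blocks are singletons and the object is not a genuine BIBD, since the definition requires $2 \le k$. So in any existing RBIBD with these parameters we have $K' \geq 2$, and the cancellation is valid. The hypothesis $\ell \ge 4$ is used only to keep the denominators $\ell - 1$ and $\ell - 2$ nonzero.

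\textbf{Computing the intersection size.} For blocks $B_i, B_j$ in different resolution classes, Lemma~\ref{lem: affine resolvable intersection of blocks} gives
\[
|B_i \cap B_j| = \frac{K'^2}{V'} = \frac{k^2}{(\ell-1)^2} \cdot \frac{(\ell-1)^2 - k}{(\ell-2)k} = \frac{k\bigl((\ell-1)^2 - k\bigr)}{(\ell-1)^2(\ell-2)},
\]
which is the claimed value.

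The main obstacle is minor: the algebraic simplification of $V' - 1$, together with justifying that the cancelled factor $k - \ell + 1$ is nonzero.
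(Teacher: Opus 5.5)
Your proposal is correct and follows the paper's proof: compute the replication number $\ell-1$, note that it equals $\frac{k}{\ell-1}+\frac{(\ell-1)^2-k}{\ell-1}$, then apply Bose's lemma (Lemma~\ref{lem: affine resolvable intersection of blocks}) and simplify $\frac{k^2}{(\ell-1)^2}\cdot\frac{(\ell-1)^2-k}{(\ell-2)k}$. You also check explicitly that $k\neq \ell-1$, so the cancellation is valid; the paper leaves this implicit.
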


\begin{proof}
    The replication number of the RBIBD is \[\ell-1 = \frac{k}{\ell-1} + \frac{(\ell-1)^2 - k}{\ell-1}\] and so it is affine resolvable. By Lemma~\ref{lem: affine resolvable intersection of blocks}, if $B_i$ and $B_j$ are in different resolution classes, \[|B_i \cap B_j| = \frac{\left(\frac{k}{\ell-1}\right)^2}{\left(\frac{(\ell-2)k}{(\ell-1)^2 - k}\right)} = \frac{k((\ell-1)^2 - k)}{(\ell-1)^2(\ell-2)}. \qedhere\]
\end{proof}

If we use Theorem~\ref{thm: general construction for nice 0-LSE l-coloured BIBDs} to construct a symmetric $0$-LSE $\ell$-coloured BIBD, then the transversal design from Theorem~\ref{thm: general construction for nice 0-LSE l-coloured BIBDs} is a \[\text{TD}_{\frac{k((\ell-1)^2 - k)}{(\ell-1)^2(\ell-2)}}\left(\ell-1, \frac{(\ell-1)(\ell-2)}{(\ell-1)^2 - k}\right).\] We now prove that the RBIBD and TD from Theorem~\ref{thm: general construction for nice 0-LSE l-coloured BIBDs} are duals of each other in the case where the resulting $0$-ULSE $\ell$-coloured BIBD in Theorem~\ref{thm: general construction for nice 0-LSE l-coloured BIBDs} is symmetric.

\begin{lemma}\label{lem: ingredients are duals in symmetric case}
    There exists an $\left(\frac{(\ell-2)k}{(\ell-1)^2 - k}, \frac{k}{\ell-1}, \frac{(\ell-1)^2 - k}{\ell-1}\right)$-RBIBD if and only if there exists a TD$_{\frac{k((\ell-1)^2 - k)}{(\ell-1)^2(\ell-2)}}\left(\ell-1, \frac{(\ell-1)(\ell-2)}{(\ell-1)^2 - k}\right)$. 
\end{lemma}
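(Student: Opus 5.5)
Set $n = \frac{(\ell-1)(\ell-2)}{(\ell-1)^2-k}$, $\mu = \frac{k((\ell-1)^2-k)}{(\ell-1)^2(\ell-2)}$, $q=\frac{k}{\ell-1}$ and $\lambda' = \frac{(\ell-1)^2-k}{\ell-1}$. The plan is to use the standard duality between affine resolvable designs and transversal designs: take the dual incidence structure, swapping the roles of points and blocks.

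First I record the parameters of the RBIBD. It has $v' = \frac{(\ell-2)k}{(\ell-1)^2-k} = nq$ points and block size $q$. Each resolution class has $v'/q = n$ blocks, and there are $r' = \ell-1$ resolution classes (Lemma~\ref{lem: ingredient RBIBD is an affine resolvable BIBD}), hence $(\ell-1)n$ blocks in all. The TD$_\mu(\ell-1,n)$ has $(\ell-1)n$ points, group size $n$, block size $\ell-1$, and $\mu n^2$ blocks (Lemma~\ref{lem: r and b for TDs}). A short check shows $\mu n^2 = nq = v'$: indeed $\mu n = q(\ell-1)/((\ell-1)^2-k)\cdot((\ell-1)^2-k)/(\ell-1)$, which simplifies to $q$. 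The counts therefore match as required for a duality.

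($\Rightarrow$) Given the RBIBD, I take as TD points its blocks, and as groups its resolution classes, so there are $\ell-1$ groups each of size $n$. For each point $x$ of the RBIBD, let the TD block $\hat x$ be the set of RBIBD blocks containing $x$. Since $x$ lies in exactly one block of each resolution class, $\hat x$ has size $r'=\ell-1$ and is a transversal of the groups. Two RBIBD blocks in the same class are disjoint, so they never occur together in any $\hat x$. Two RBIBD blocks $B_i,B_j$ in different classes occur together in exactly $|B_i\cap B_j|$ of the TD blocks, and by Lemma~\ref{lem: ingredient RBIBD is an affine resolvable BIBD} this equals $\mu$. So we obtain a TD$_\mu(\ell-1,n)$. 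I also note that distinct points give distinct TD blocks only up to repetition, which is harmless because the TD definition allows a multiset of blocks.

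($\Leftarrow$) Given a TD$_\mu(\ell-1,n)$, I take as RBIBD points its $\mu n^2 = v'$ blocks. For each TD point $p$, the new block $\check p$ is the set of TD blocks containing $p$, which has size $\mu n = q$. For each group $G$, the blocks $\{\check p : p\in G\}$ partition the TD blocks, since every TD block meets $G$ exactly once. This gives $\ell-1$ resolution classes.

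The main obstacle is balance: showing that two TD blocks $T,T'$ share exactly $\lambda'$ points, which is the number of $p$ with $T,T'\in\check p$. Duality alone does not give this; it is essentially the converse of Bose's result. My approach is to count intersection sizes. Fix $T$ and let $x_{T'} = |T\cap T'|$ for $T'\neq T$. Counting pairs $(p,T')$ with $p\in T\cap T'$ gives $\sum x = (\ell-1)(\mu n-1)$. Counting triples $(p,p',T')$ with distinct $p,p'\in T\cap T'$ gives $\sum x(x-1) = (\ell-1)(\ell-2)(\mu-1)$. There are $\mu n^2-1$ other blocks. I then compute the variance of the $x_{T'}$ and expect it to come out to $0$ precisely under the given parameter relation, using $\mu n = q$ and $n=(\ell-1)(\ell-2)/((\ell-1)^2-k)$. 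That forces every $x_{T'}$ to equal the common value $(\ell-1)(q-1)/(nq-1)$, which should simplify to $\lambda'$. If the variance does not vanish identically, I would fall back on the Fisher/Bose-type bound for TDs: show $\mu n^2 \ge$ a quantity that is attained here, and use the equality case to force constant intersections.
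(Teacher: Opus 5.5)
Your proposal is correct, and it takes a different and more complete route than the paper in the converse direction.

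\textbf{Forward direction.} This matches the paper's argument: dualize the RBIBD, let resolution classes become groups, and use Bose's intersection result (Lemma~\ref{lem: ingredient RBIBD is an affine resolvable BIBD}) to get index $\mu$.

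\textbf{Converse direction.} The paper handles $(\Leftarrow)$ only by remarking that the dual of the dual is the original design. That remark recovers $\mathcal{R}$ from a TD already known to be the dual of an RBIBD. It does not show that an arbitrary TD$_\mu(\ell-1,n)$ dualizes to a BIBD. You correctly identified that this needs a converse of Bose's theorem, and your second-moment count supplies it.

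You left the variance computation as an expectation, but it does vanish, so the fallback is unnecessary. Write $\lambda'=\ell-1-q$, so that $\lambda'+q=\ell-1$. Then $\lambda' n=\ell-2$, $(\ell-2)\mu=q\lambda'$, and $\mu n=q$. It follows that
\[
\lambda'(nq-1)=(\ell-1)(q-1)=S_1 ,
\]
so the mean of the $x_{T'}$ is $\lambda'$. Moreover,
\[
\sum_{T'\neq T}(x_{T'}-\lambda')^2 \;=\; S_2-\lambda' S_1 \;=\; (\ell-1)\bigl[(\ell-2)(\mu-1)-(\lambda'-1)(q-1)\bigr] \;=\; (\ell-1)\bigl[(q\lambda'-\ell+2)-(q\lambda'-\ell+2)\bigr] \;=\; 0 .
\]
Hence any two TD blocks meet in exactly $\lambda'$ points. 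Combined with your block-size check ($\mu n=q$) and your observation that each group yields a parallel class, the dual is an $(nq,q,\lambda')$-RBIBD.

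\textbf{Comparison.} The paper's route is shorter, but it only establishes the ``only if'' direction. Your counting argument actually proves the ``if'' direction for every transversal design with these parameters.
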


\begin{proof}
    Let $\mathcal{R}$ be an $\left(\frac{(\ell-2)k}{(\ell-1)^2 - k}, \frac{k}{\ell-1}, \frac{(\ell-1)^2 - k}{\ell-1}\right)$-RBIBD. Let $x_1, \dots, x_{\frac{(\ell-2)k}{(\ell-1)^2 - k}}$ be the points of $\mathcal{R}$ and let $B_1, \dots, B_{\frac{(\ell-1)^2(\ell-2)}{(\ell-1)^2 - k}}$ be the blocks of $\mathcal{R}$. We construct the dual design $\mathcal{T}$ on the points $y_1, \dots, y_{\frac{(\ell-1)^2(\ell-2)}{(\ell-1)^2 - k}}$ and with blocks $D_1, \dots, D_{\frac{(\ell-2)k}{(\ell-1)^2 - k}}$ where $y_i \in D_j$ if and only if $x_j \in B_i$. We claim that $\mathcal{T}$ is a TD$_{\frac{k((\ell-1)^2 - k)}{(\ell-1)^2(\ell-2)}}\left(\ell-1, \frac{(\ell-1)(\ell-2)}{(\ell-1)^2 - k}\right)$. 

    Since $\mathcal{R}$ has $\frac{(\ell-1)^2(\ell-2)}{(\ell-1)^2 - k}$ blocks, $\mathcal{T}$ has $\frac{(\ell-1)^2(\ell-2)}{(\ell-1)^2 - k}$ points. Since $y_i \in D_j$ if and only if $x_j \in B_i$, $|D_j|$ is equal to the number of blocks containing $x_j$ in $\mathcal{R}$. The replication number of $\mathcal{R}$ is 
    \begin{align*}
        \frac{(\ell-1)^2 - k}{\ell-1} \left( \frac{(\ell-2)k}{(\ell-1)^2 - k} - 1 \right)\left( \frac{1}{\frac{k}{\ell-1} - 1} \right) &= \frac{\ell k - k - \ell^2 + 2\ell - 1}{k - \ell + 1} \\
        &= \frac{(\ell-1)(k-\ell+1)}{k-\ell+1} \\
        &=\ell-1.
    \end{align*}
    Thus the block size of $\mathcal{T}$ is also $\ell - 1$. 

    Consider the $\ell-1$ resolution classes of the RBIBD. There are $\frac{(\ell-1)(\ell-2)}{(\ell-1)^2 - k}$ blocks in each resolution class. The blocks in each resolution class are pairwise disjoint and the union of all of the blocks in a resolution class is the entire point set of the RBIBD. Without loss of generality, let $\left\{B_1, \dots, B_{\frac{(\ell-1)(\ell-2)}{(\ell-1)^2 - k}}\right\}$ be a resolution class. Translating into $\mathcal{T}$, we observe that no block will contain any two of the points $y_1, \dots, y_{\frac{(\ell-1)(\ell-2)}{(\ell-1)^2 - k}}$ and every block will contain at least one of the points $y_1, \dots, y_{\frac{(\ell-1)(\ell-2)}{(\ell-1)^2 - k}}$. Therefore the points $y_1, \dots, y_{\frac{(\ell-1)(\ell-2)}{(\ell-1)^2 - k}}$ form a group. Since there are $\ell-1$ resolution classes in the RBIBD, there are $\ell-1$ groups in $\mathcal{T}$, as we desired.

    All that remains to show is that every pair of points that do not share a group share exactly $\frac{k((\ell-1)^2 - k)}{(\ell-1)^2(\ell-2)}$ blocks. Let $y_i$ and $y_j$ be two points from different groups in $\mathcal{T}$. Then the pair $y_i y_j$ is contained in exactly $\frac{k((\ell-1)^2 - k)}{(\ell-1)^2(\ell-2)}$ blocks if and only if $|B_i \cap B_j| = \frac{k((\ell-1)^2 - k)}{(\ell-1)^2(\ell-2)}$ where $B_i$ and $B_j$ are blocks in different resolution classes. We know this to be true by Lemma~\ref{lem: ingredient RBIBD is an affine resolvable BIBD} and so $\mathcal{T}$ is a TD$_{\frac{k((\ell-1)^2 - k)}{(\ell-1)^2(\ell-2)}}(\ell-1, \frac{(\ell-1)(\ell-2)}{(\ell-1)^2 - k})$. 

    Since the dual of the dual of a design is the original design, the dual of $\mathcal{T}$ is $\mathcal{R}$. This proves the result.
\end{proof}

We now state an alternate version of Theorem~\ref{thm: general construction for nice 0-LSE l-coloured BIBDs} where only the existence of the RBIBD ingredient design is needed for constructing a symmetric $0$-ULSE $\ell$-coloured BIBD.

\begin{theorem}
    If there exists an $\left(\frac{(\ell-2)k}{(\ell-1)^2 - k}, \frac{k}{\ell-1}, \frac{(\ell-1)^2 - k}{\ell-1}\right)$-RBIBD and $\lambda$ is a multiple of $\frac{k((\ell-1)^2 - k)}{(\ell-1)^2}$, then there exists a $0$-ULSE $\ell$-coloured $(v,k,\lambda)$-BIBD. In particular, if there exists an $\left(\frac{(\ell-2)k}{(\ell-1)^2 - k}, \frac{k}{\ell-1}, \frac{(\ell-1)^2 - k}{\ell-1}\right)$-RBIBD, then there exists a symmetric $0$-ULSE $\ell$-coloured $(v,k,\lambda)$-BIBD.
\end{theorem}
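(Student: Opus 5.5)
We will deduce the statement from Theorem~\ref{thm: general construction for nice 0-LSE l-coloured BIBDs}, Lemma~\ref{lem: ingredients are duals in symmetric case} and Theorem~\ref{thm: duplicating blocks}. Throughout, $v$ is the quantity determined by Equation~\eqref{eq: k=0 mod l-1, soln for v}, i.e.\ $v = \frac{\ell(\ell-2)k}{(\ell-1)^2-k}$. Set
\[
\lambda_0 = \frac{k((\ell-1)^2-k)}{(\ell-1)^2}.
\]
We first build a symmetric $0$-ULSE $\ell$-coloured $(v,k,\lambda_0)$-BIBD and then pass to multiples of $\lambda_0$.

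\textbf{Step 1: the RBIBD ingredient.} Theorem~\ref{thm: general construction for nice 0-LSE l-coloured BIBDs} requires a $\left(\frac{v}{\ell}, \frac{k}{\ell-1}, \frac{k\ell(\ell-2)}{v(\ell-1)}\right)$-RBIBD. Substituting the value of $v$ gives $\frac{v}{\ell} = \frac{(\ell-2)k}{(\ell-1)^2-k}$. For the index,
\[
\frac{k\ell(\ell-2)}{v(\ell-1)} = \frac{k\ell(\ell-2)\bigl((\ell-1)^2-k\bigr)}{\ell(\ell-2)k(\ell-1)} = \frac{(\ell-1)^2-k}{\ell-1}.
\]
So the required RBIBD is exactly the hypothesised $\left(\frac{(\ell-2)k}{(\ell-1)^2 - k}, \frac{k}{\ell-1}, \frac{(\ell-1)^2 - k}{\ell-1}\right)$-RBIBD.

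\textbf{Step 2: the TD ingredient.} Theorem~\ref{thm: general construction for nice 0-LSE l-coloured BIBDs} also requires a TD$_{\frac{\lambda_0}{\ell-2}}\left(\ell-1, \frac{v(\ell-1)}{k\ell}\right)$. The group size is
\[
\frac{v(\ell-1)}{k\ell} = \frac{(\ell-1)(\ell-2)}{(\ell-1)^2-k},
\]
and the index is $\frac{\lambda_0}{\ell-2} = \frac{k((\ell-1)^2-k)}{(\ell-1)^2(\ell-2)}$. This is precisely the transversal design that Lemma~\ref{lem: ingredients are duals in symmetric case} produces as the dual of the RBIBD. The index is a positive integer, because by Lemma~\ref{lem: ingredient RBIBD is an affine resolvable BIBD} it equals the intersection size of two blocks from different resolution classes.

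\textbf{Step 3: the symmetric design.} With both ingredients in hand, Theorem~\ref{thm: general construction for nice 0-LSE l-coloured BIBDs} gives a $0$-ULSE $\ell$-coloured $(v,k,\lambda_0)$-BIBD. It is symmetric because $\lambda_0 = \frac{k(k-1)}{v-1}$ (Theorem~\ref{thm: smallest lambda in 0-ULSE colouring}), so $r = \frac{\lambda_0(v-1)}{k-1} = k$. This settles the ``in particular'' clause.

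\textbf{Step 4: multiples of $\lambda_0$.} For $\lambda = m\lambda_0$ with $m \in \mathbb{Z}^+$, apply Theorem~\ref{thm: duplicating blocks} to the design from Step 3. Taking $m$ copies of each block preserves the $0$-ULSE colouring and gives a $(v,k,\lambda)$-BIBD.

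\textbf{Main obstacle.} The only real work is the parameter bookkeeping in Steps 1 and 2: checking that the values forced by Equation~\eqref{eq: k=0 mod l-1, soln for v} and $\lambda = \lambda_0$ match the hypotheses of Theorem~\ref{thm: general construction for nice 0-LSE l-coloured BIBDs} exactly, in particular that the TD index $\frac{\lambda_0}{\ell-2}$ is an integer. We also take $\ell \geq 4$ implicitly, as Theorem~\ref{thm: general construction for nice 0-LSE l-coloured BIBDs} requires; this is automatic for any nontrivial parameters, by the remark following Theorem~\ref{thm: 0-ULSE bounds on ell}.
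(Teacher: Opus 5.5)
Your proposal is correct and follows the paper's own proof. The duality lemma supplies the TD from the RBIBD, the general construction then yields the symmetric $0$-ULSE $\ell$-coloured $\bigl(v,k,\frac{k((\ell-1)^2-k)}{(\ell-1)^2}\bigr)$-BIBD, block duplication handles the multiples, and your explicit parameter checks fill in what the paper leaves as ``easily verified.'' The one thing to tidy is the justification of $\ell\geq 4$: the remark you cite is a necessary condition on the design you are building, so it is better to argue from the RBIBD hypothesis itself, since block size $\frac{k}{\ell-1}\geq 2$ and index $\frac{(\ell-1)^2-k}{\ell-1}\geq 1$ give $2(\ell-1)\leq k\leq (\ell-1)(\ell-2)$, which forces $\ell\geq 4$.
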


\begin{proof}
    From Lemma~\ref{lem: ingredients are duals in symmetric case}, the existence of a $\left(\frac{(\ell-2)k}{(\ell-1)^2 - k}, \frac{k}{\ell-1}, \frac{(\ell-1)^2 - k}{\ell-1}\right)$-RBIBD implies the existence of a TD$_{\frac{k((\ell-1)^2 - k)}{(\ell-1)^2(\ell-2)}}\left(\ell-1, \frac{(\ell-1)(\ell-2)}{(\ell-1)^2 - k}\right)$. From Theorem~\ref{thm: general construction for nice 0-LSE l-coloured BIBDs}, we obtain a $0$-ULSE $\ell$-coloured  \[\left(\frac{\ell(\ell-2)k}{(\ell-1)^2 - k}, k, \frac{k((\ell-1)^2 - k)}{(\ell-1)^2}\right)\!\text{-BIBD.}\] Recall from Equation~\eqref{eq: k=0 mod l-1, soln for v} that $v = \frac{\ell(\ell-2)k}{(\ell-1)^2 - k}$. It can be easily verified that the above is a symmetric BIBD. If $\lambda = m\frac{k((\ell-1)^2 - k)}{(\ell-1)^2}$ for some $m\in \mathbb{Z}^+$, then a $0$-ULSE $\ell$-coloured $(v,k,\lambda)$-BIBD can be obtained by duplicating each block of the above $\left(\frac{\ell(\ell-2)k}{(\ell-1)^2 - k}, k, \frac{k((\ell-1)^2 - k)}{(\ell-1)^2}\right)\!\text{-BIBD}$ $m$ times, similar to the proof of Theorem~\ref{thm: duplicating blocks}. 
\end{proof}

\section{Applications of Theorem~\ref{thm: general construction for nice 0-LSE l-coloured BIBDs}}\label{sec: Applications of general construction}

Theorem~\ref{thm: general construction for nice 0-LSE l-coloured BIBDs} states that given a specific transversal design and resolvable BIBD, we can construct a $0$-ULSE coloured BIBD. We now discuss two ways of furnishing these ingredient designs: Hadamard matrices and affine planes.

\subsection{Hadamard Matrices}\label{sec: Hadamard matrices}

A \emph{Hadamard matrix} of order $n\in \mathbb{Z}^+$, denoted $H(n)$, is an $n\times n$ matrix with entries in $\{-1, 1\}$ such that the rows are pairwise orthogonal. It is commonly known that if a Hadamard matrix of order $n$ exists, then $n\in \{1,2\}$ or $n\equiv 0 \pmod{4}$. Hadamard \cite{H1893} conjectured that this necessary condition is also sufficient and the conjecture is still open. See Part V of \cite{HCD} for more on Hadamard matrices. A Hadamard matrix $H(4n)$ exists if and only if a $(4n-1, 2n-1, n-1)$-BIBD exists; a proof of this fact can be found in Chapter 4 of \cite{S04}.

A Hadamard matrix is \emph{regular} if the sum of the entries in each row and the sum of the entries in each column of the matrix are the same. A \emph{Menon design} is a BIBD with parameters either $(4u^2, 2u^2 + u, u^2 + u)$ or $(4u^2, 2u^2 - u, u^2 - u)$. It is well-known that there exists a regular $H(n)$ if and only if a Menon design on $n$ points exists (see Chapter 4 of \cite{S04}). Menon designs of the form $(4u^2, 2u^2 - u, u^2 - u)$ satisfy Equation~\eqref{eq: k=0 mod l-1, soln for v} when substituting $\ell = 2u$. This substitution yields a design with parameters $\left(\ell^2, \binom{\ell}{2}, \frac{\ell(\ell - 2)}{4}\right)$. 

Using Theorem~\ref{thm: general construction for nice 0-LSE l-coloured BIBDs} and the following lemmas, we can construct $0$-ULSE $\ell$-coloured $\left(\ell^2, \binom{\ell}{2}, \frac{\ell(\ell - 2)}{4}\right)$-BIBDs using Hadamard matrices of order $\ell$.

\begin{lemma}\label{lem: H(4n) = TD_ell/4(ell - 1, 2)}
    For $\ell \equiv 0 \pmod{4}$, a Hadamard matrix $H(\ell)$ exists if and only if a TD$_{\frac{\ell}{4}}(\ell - 1, 2)$ exists.
\end{lemma}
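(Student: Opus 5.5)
I would prove both directions by passing through the standard equivalence, quoted above, between $H(\ell)$ and a $(\ell-1, \frac{\ell}{2}-1, \frac{\ell}{4}-1)$-BIBD. The cleaner route, though, is a direct matrix correspondence.

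Normalize $H(\ell)$ so that its first row is all ones; this uses row and column negations, which preserve the Hadamard property. Let the remaining $\ell-1$ rows be $R_1,\dots,R_{\ell-1}$. Orthogonality with the first row gives that each $R_i$ has exactly $\frac{\ell}{2}$ entries equal to $+1$ and $\frac{\ell}{2}$ equal to $-1$. Orthogonality of $R_i$ and $R_j$ for $i\neq j$ gives that the $\ell$ columns split into four types $(\pm,\pm)$. A short count, using the column sums in rows $i$ and $j$ together with $R_i\cdot R_j=0$, shows that each of the four sign patterns $(\epsilon_i,\epsilon_j)$ occurs in exactly $\frac{\ell}{4}$ columns.

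For the forward direction, I would build the TD as follows. The points are $(i,\epsilon)$ with $1\le i\le \ell-1$ and $\epsilon\in\{+1,-1\}$, and the groups are $G_i=\{(i,+1),(i,-1)\}$, giving $\ell-1$ groups of size $2$. Each of the $\ell$ columns $c$ yields the block $\{(i,H_{i+1,c}) : 1\le i\le \ell-1\}$, which meets every group exactly once. A pair $(i,\epsilon),(j,\delta)$ with $i\ne j$ lies in exactly the blocks coming from columns with pattern $(\epsilon,\delta)$ in rows $i+1$ and $j+1$. By the count above there are $\frac{\ell}{4}$ such columns, so we have a TD$_{\frac{\ell}{4}}(\ell-1,2)$. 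As a consistency check, Lemma~\ref{lem: r and b for TDs} predicts $\frac{\ell}{4}\cdot 4=\ell$ blocks, which matches the $\ell$ columns.

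For the converse, take a TD$_{\frac{\ell}{4}}(\ell-1,2)$. By Lemma~\ref{lem: r and b for TDs} it has exactly $\ell$ blocks. Label the two points of each group $G_i$ by $+1$ and $-1$. Form the $(\ell-1)\times\ell$ matrix whose entry $(i,c)$ is the label of the point of $G_i$ lying in block $c$, and adjoin an all-ones row on top.
\begin{itemize}
\item Each point lies in $\frac{\ell}{4}\cdot 2=\frac{\ell}{2}$ blocks, so every non-top row has zero sum and is orthogonal to the all-ones row.
\item For rows $i\ne j$, each of the four label patterns occurs in exactly $\frac{\ell}{4}$ columns by the TD index. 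The inner product is therefore $\frac{\ell}{4}(1-1-1+1)=0$.
\end{itemize}
Hence the matrix is an $H(\ell)$.

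The only step needing care is the four-way count in the forward direction. Write $a,b,c,d$ for the number of columns with patterns $(+,+)$, $(+,-)$, $(-,+)$, $(-,-)$. Then the equations $a+b=a+c=\frac{\ell}{2}$, $a+b+c+d=\ell$ and $a-b-c+d=0$ force $a=b=c=d=\frac{\ell}{4}$. This is routine, and I expect no real obstacle; the hypothesis $\ell\equiv 0\pmod 4$ is exactly what makes $\frac{\ell}{4}$ integral.
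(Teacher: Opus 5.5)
Your proof is correct, but it does not follow the paper's route. The paper gives no argument of its own for this lemma. It only says the result follows by combining \cite{H74, T33}, presumably by chaining the equivalence between $H(\ell)$ and an $(\ell-1,\frac{\ell}{2}-1,\frac{\ell}{4}-1)$-BIBD (quoted just before the lemma) with a known equivalence between that design and a TD$_{\frac{\ell}{4}}(\ell-1,2)$. That is essentially the route you mention in your first sentence and then set aside.

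Your direct correspondence (normalize, delete the all-ones row, read rows as groups and columns as blocks) is the classical identification of a normalized Hadamard matrix with an orthogonal array of strength $2$ and index $\frac{\ell}{4}$. Every step checks out:
\begin{itemize}
\item Column negations alone achieve the normalization.
\item Your linear system does force $a=b=c=d=\frac{\ell}{4}$.
\item In the converse, Lemma~\ref{lem: r and b for TDs} gives exactly $\ell$ blocks and replication number $\frac{\ell}{2}$, so the bordered matrix is square with zero-sum lower rows and pairwise orthogonal rows.
\end{itemize}

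What your route buys is a self-contained proof with an explicit bijection. It produces the TD ingredient needed in Theorem~\ref{thm: Hadamard matrix gives 0-ULSE coloured BIBD} concretely from the matrix. It also shows in passing that the four-way count forces $4\mid\ell$ once $\ell\ge 3$. The paper's citation buys brevity and keeps the lemma tied to the Hadamard $2$-design equivalence that it quotes in Section~\ref{sec: Hadamard matrices} and reuses in Section~\ref{sec: TPP Construction}.
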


\begin{lemma}\label{lem: H(4n) = (4n, 4n/2, 4n/2 - 1)-RBIBD}
    For $\ell \equiv 0 \pmod{4}$, a Hadamard matrix $H(\ell)$ exists if and only if an $\left(\ell, \frac{\ell}{2}, \frac{\ell}{2} - 1\right)$-RBIBD exists.
\end{lemma}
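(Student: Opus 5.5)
We prove both directions by passing through a normalized Hadamard matrix, using the classical equivalence between $H(\ell)$ and a $(\ell-1,\frac{\ell}{2}-1,\frac{\ell}{4}-1)$-BIBD as a guide, but working directly with the rows of $H$.

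\emph{From $H(\ell)$ to an RBIBD.} Normalize $H$ (negating columns and rows) so that its first row is all ones. Index the columns by a set $V$ of size $\ell$. For each of the remaining $\ell-1$ rows $h_i$, orthogonality to the first row means $h_i$ has exactly $\frac{\ell}{2}$ entries $+1$ and $\frac{\ell}{2}$ entries $-1$. Let $P_i$ and $N_i$ be the corresponding column sets; then $\Pi_i=\{P_i,N_i\}$ is a partition of $V$ into two blocks of size $\frac{\ell}{2}$.

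We must check that every pair of columns $x\neq y$ lies together in exactly $\frac{\ell}{2}-1$ of the blocks. The standard argument uses column orthogonality: $H^T$ is also Hadamard since $HH^T=\ell I$ implies $H^TH=\ell I$. Consequently
\[\sum_{i=1}^{\ell} h_{ix}h_{iy}=0.\]
The first row contributes $1$, so among the other $\ell-1$ rows the number of agreements $a$ and disagreements $d$ satisfy $a-d=-1$ and $a+d=\ell-1$. This gives $a=\frac{\ell}{2}-1$. Since $x$ and $y$ share a block of $\Pi_i$ exactly when $h_{ix}=h_{iy}$, this is the required index. Hence we obtain an $(\ell,\frac{\ell}{2},\frac{\ell}{2}-1)$-RBIBD.

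\emph{From the RBIBD to $H(\ell)$.} The replication number is $r=\frac{(\ell/2-1)(\ell-1)}{\ell/2-1}=\ell-1$, so there are $\ell-1$ resolution classes, each consisting of two complementary blocks. For each class $\{P_i,N_i\}$ define a $\pm1$ row that is $+1$ on $P_i$ and $-1$ on $N_i$, and prepend the all-ones row. The counting above reverses: each pair of columns agrees in exactly $\frac{\ell}{2}-1$ of these rows plus the first row, so the column inner products vanish. Thus $H^TH=\ell I$, and $H$ is a Hadamard matrix.

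The main care point is in this converse: the design gives orthogonality of \emph{columns}, not rows, so we must invoke $H^TH=\ell I\Rightarrow HH^T=\ell I$ for square matrices. That step is routine. The only other check is that the block size $\frac{\ell}{2}$ forces each resolution class to contain exactly two blocks, which is immediate since $|V|=\ell$.
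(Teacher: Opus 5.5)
Your proof is correct. The paper gives no argument of its own for this lemma and simply cites \cite{BS70}, so your proposal is a self-contained substitute for that citation.

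You give the standard direct correspondence in both directions:
\begin{itemize}
\item Normalize $H$ so that its first row is all ones. Each remaining row then splits the $\ell$ columns into two complementary halves, and these halves form a resolution class.
\item Column orthogonality, which you correctly derive via $HH^T=\ell I \Rightarrow H^TH=\ell I$, says exactly that each pair of columns agrees in $\frac{\ell}{2}-1$ of the other $\ell-1$ rows. That agreement count is the required index.
\item The converse reverses this count. You again correctly invoke the square-matrix fact, this time in the direction $H^TH=\ell I \Rightarrow HH^T=\ell I$, to get the row orthogonality required by the paper's definition.
\end{itemize}

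The citation is shorter; your argument makes the correspondence explicit. It also shows that the congruence hypothesis is nearly automatic. Your converse produces a genuine $H(\ell)$, so an $(\ell,\frac{\ell}{2},\frac{\ell}{2}-1)$-RBIBD with $\ell\geq 4$ can only exist when $\ell\equiv 0 \pmod 4$. The hypothesis is therefore really needed only to exclude $\ell=2$, where $(2,1,0)$ is not an admissible BIBD.

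One small point you should state explicitly in the first direction: $\ell\equiv 0\pmod 4$ gives $\ell\geq 4$. Hence $2\le \frac{\ell}{2}<\ell$ and $\frac{\ell}{2}-1\geq 1$, so the object you build meets the paper's definition of a BIBD.
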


Lemma~\ref{lem: H(4n) = TD_ell/4(ell - 1, 2)} can be obtained by combining results from \cite{H74, T33}. A proof for Lemma~\ref{lem: H(4n) = (4n, 4n/2, 4n/2 - 1)-RBIBD} can be found in \cite{BS70}.

\begin{theorem}\label{thm: Hadamard matrix gives 0-ULSE coloured BIBD}
    Let $\ell \equiv 0 \pmod{4}$. If there exists a Hadamard matrix $H(\ell)$ then there exists a $\left(\ell^2, \binom{\ell}{2}, \frac{\ell(\ell - 2)}{4}\right)$-BIBD with a $0$-ULSE $\ell$-colouring. 
\end{theorem}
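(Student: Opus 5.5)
The plan is to apply Theorem~\ref{thm: general construction for nice 0-LSE l-coloured BIBDs} with $v=\ell^2$, $k=\binom{\ell}{2}=\frac{\ell(\ell-1)}{2}$ and $\lambda=\frac{\ell(\ell-2)}{4}$. Both ingredient designs will come from the Hadamard matrix $H(\ell)$, via Lemmas~\ref{lem: H(4n) = TD_ell/4(ell - 1, 2)} and \ref{lem: H(4n) = (4n, 4n/2, 4n/2 - 1)-RBIBD}.

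First I verify the hypotheses of Theorem~\ref{thm: general construction for nice 0-LSE l-coloured BIBDs}. Since $\ell\equiv 0\pmod 4$, we have $\ell\geq 4$. Also $\lambda$ is an integer, and $\frac{\lambda}{\ell-2}=\frac{\ell}{4}\in\mathbb{Z}^+$. For the TD group size we get
\[\frac{v(\ell-1)}{k\ell}=\frac{\ell^2(\ell-1)}{\frac{\ell(\ell-1)}{2}\cdot \ell}=2,\]
so the required transversal design is a TD$_{\frac{\ell}{4}}(\ell-1,2)$. For the RBIBD, the parameters are $\frac{v}{\ell}=\ell$ and $\frac{k}{\ell-1}=\frac{\ell}{2}$. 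The index is
\[\frac{k\ell(\ell-2)}{v(\ell-1)}=\frac{\frac{\ell(\ell-1)}{2}\cdot\ell(\ell-2)}{\ell^2(\ell-1)}=\frac{\ell-2}{2}=\frac{\ell}{2}-1.\]
So the required RBIBD is an $\left(\ell,\frac{\ell}{2},\frac{\ell}{2}-1\right)$-RBIBD.

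Next I invoke Lemma~\ref{lem: H(4n) = TD_ell/4(ell - 1, 2)} to obtain the TD$_{\frac{\ell}{4}}(\ell-1,2)$ from $H(\ell)$. I invoke Lemma~\ref{lem: H(4n) = (4n, 4n/2, 4n/2 - 1)-RBIBD} to obtain the $\left(\ell,\frac{\ell}{2},\frac{\ell}{2}-1\right)$-RBIBD. Theorem~\ref{thm: general construction for nice 0-LSE l-coloured BIBDs} then yields a $0$-ULSE $\ell$-coloured $\left(\ell^2,\binom{\ell}{2},\frac{\ell(\ell-2)}{4}\right)$-BIBD.

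As a sanity check, Equation~\eqref{eq: k=0 mod l-1, soln for v} holds. We have $(\ell-1)^2-\frac{\ell(\ell-1)}{2}=\frac{(\ell-1)(\ell-2)}{2}$, which gives $v=\frac{\ell(\ell-2)\ell(\ell-1)/2}{(\ell-1)(\ell-2)/2}=\ell^2$.

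There is no real obstacle here; the only care needed is the arithmetic matching the parameters, which is routine.
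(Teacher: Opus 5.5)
Your proposal is correct and is exactly the paper's argument, which simply combines Lemma~\ref{lem: H(4n) = TD_ell/4(ell - 1, 2)}, Lemma~\ref{lem: H(4n) = (4n, 4n/2, 4n/2 - 1)-RBIBD}, and Theorem~\ref{thm: general construction for nice 0-LSE l-coloured BIBDs}; you have written out the parameter matching that the paper leaves implicit. Keep your check of Equation~\eqref{eq: k=0 mod l-1, soln for v}: the proof of the general construction uses that equation to obtain exactly $\ell-1$ resolution classes, and indeed the $\left(\ell,\frac{\ell}{2},\frac{\ell}{2}-1\right)$-RBIBD has replication number $\ell-1$.
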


\begin{proof}
    The result follows directly from Lemma~\ref{lem: H(4n) = TD_ell/4(ell - 1, 2)}, Lemma~\ref{lem: H(4n) = (4n, 4n/2, 4n/2 - 1)-RBIBD}, and Theorem~\ref{thm: general construction for nice 0-LSE l-coloured BIBDs}.
\end{proof}

\subsection{Affine Planes}

An \emph{affine plane} of order $n$ is an $(n^2, n, 1)$-BIBD. Using well known results on affine planes, we can obtain the ingredients for Theorem~\ref{thm: general construction for nice 0-LSE l-coloured BIBDs}.

\begin{lemma}[See Theorems 6.32 and 6.44 in \cite{S04}]\label{lem: affine plane implies TD}
    There exists an affine plane of order $n$ if and only if there exists a TD$(n+1,n)$.
\end{lemma}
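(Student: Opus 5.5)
The plan is to prove both directions by passing to the dual structure, in the same spirit as the proof of Lemma~\ref{lem: ingredients are duals in symmetric case}: the blocks of a TD$(n+1,n)$ play the role of the points of an affine plane of order $n$, and vice versa.

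For the forward direction, let $(V,\mathcal{L})$ be an $(n^2,n,1)$-BIBD. By Lemmas~\ref{lem: replication num of a BIBD} and \ref{lem: num of blocks in a BIBD}, each point lies on $r=n+1$ lines and there are $n(n+1)$ lines.

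First I establish the parallel axiom. Take a line $L$ and a point $x\notin L$. Each of the $n$ points of $L$ lies on a unique line with $x$, and these $n$ lines are distinct, since two of them sharing $x$ and two points of $L$ would contradict $\lambda=1$. Hence exactly one of the $n+1$ lines through $x$ is disjoint from $L$.

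Next, call two lines parallel if they are equal or disjoint. This relation is an equivalence relation. For transitivity, suppose $L_1\parallel L_2$ and $L_2\parallel L_3$ but $L_1$ and $L_3$ meet in a point $x$. Then $x\notin L_2$, and we get two distinct lines through $x$ missing $L_2$, a contradiction. A parallel class therefore consists of pairwise disjoint lines. Through every point there is exactly one line of a given class, so each class has $n$ lines covering $V$. Consequently there are $(n+1)$ parallel classes, and the plane is resolvable.

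Now I form the dual. Its points are the $n(n+1)$ lines, its groups are the $n+1$ parallel classes (each of size $n$), and for each $x\in V$ there is a block consisting of the $n+1$ lines through $x$. A block meets each group exactly once. Two lines in different groups are non-parallel, so they meet in exactly one point, which gives index $1$. This yields a TD$(n+1,n)$.

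For the converse, start from a TD$(n+1,n)$. I take the dual whose points are its $n^2$ blocks and whose lines are its $n(n+1)$ points. A line is the set of blocks through the corresponding point. By Lemma~\ref{lem: r and b for TDs} each point lies in $n$ blocks, so every line has size $n$.

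Two distinct TD-blocks share at most one point, since sharing two would violate $\lambda=1$; also no two points in a common block lie in the same group. To show every pair of blocks meets exactly once, I double count. The number of pairs (point, unordered pair of blocks through it) is $n(n+1)\binom{n}{2}=\binom{n^2}{2}$. Since each pair of blocks is counted at most once, and this total equals the number of pairs of blocks, every pair of blocks shares exactly one point. Hence the dual is an $(n^2,n,1)$-BIBD.

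I expect the main care to be needed in the resolvability step of the forward direction, namely the Playfair argument and the transitivity of parallelism. The rest is routine counting.
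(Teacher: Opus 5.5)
Your proof is correct, and it takes a different route from the one the paper relies on. The paper gives no argument of its own for this lemma, only the pointer to Theorems 6.32 and 6.44 of \cite{S04}. As far as I can tell, the textbook route behind that citation goes through mutually orthogonal Latin squares: a TD$(k,n)$ is equivalent to $k-2$ MOLS of order $n$, and $n-1$ MOLS of order $n$ are equivalent to an affine plane of order $n$.

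You bypass Latin squares and work by incidence duality. Groups of the TD correspond to parallel classes, and blocks of the TD correspond to points of the plane. The MOLS route buys generality, since that equivalence holds for every $k$ and connects to projective planes. Yours is shorter for the case $k=n+1$, gives an explicit correspondence, and uses only counting facts already in the paper (Lemmas~\ref{lem: replication num of a BIBD}, \ref{lem: num of blocks in a BIBD} and \ref{lem: r and b for TDs}).

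Your forward direction reproves Lemma~\ref{lem: affine planes are resolvable} along the way. It is also exactly the index-one case of the construction in Lemma~\ref{lem: ingredients are duals in symmetric case}: taking $k=(\ell-1)(\ell-2)$ there turns the RBIBD into an affine plane of order $\ell-2$ and the TD into a TD$(\ell-1,\ell-2)$.

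Your converse is the half with real content. Remarking that the dual of the dual is the original design is all that Lemma~\ref{lem: ingredients are duals in symmetric case} offers for its reverse direction. That remark would by itself only handle transversal designs that already came from a plane. Your double count handles an arbitrary TD$(n+1,n)$: any two blocks share at most one point, and $n(n+1)\binom{n}{2}=\binom{n^2}{2}$, so every pair of blocks meets exactly once. The same count shows $k\le n+1$ for every TD$(k,n)$, and you are using precisely its equality case.
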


A \emph{projective plane} of order $n$ is an $(n^2 + n + 1, n+1, 1)$-BIBD. It is well known that a projective plane of order $n$ exists if and only if an affine plane of order $n$ exists; see \cite{S04} for a proof. It was originally proven in \cite{VB1906} that there exists a projective plane of order $n$ when $n$ is a power of a prime. Consequently, we have the following existence result for affine planes.

\begin{lemma}\label{lem: affine planes of prime power order}
    If $n\geq 2$ is a prime power, then there exists an affine plane of order $n$.
\end{lemma}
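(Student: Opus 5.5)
This follows from Veblen--Bussey \cite{VB1906}: for every prime power $n$ there is a projective plane of order $n$. Combined with the equivalence between projective and affine planes of order $n$, the lemma follows at once. For the reader who wants a self-contained argument, I would instead give the direct field construction, as follows.

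Let $n=p^a\geq 2$ be a prime power and let $\mathbb{F}_n$ be the finite field of order $n$. Take $V=\mathbb{F}_n^2$, so $|V|=n^2$. The blocks are the lines of $\mathbb{F}_n^2$, in two families:
\begin{itemize}
\item for each slope $m\in\mathbb{F}_n$ and intercept $c\in\mathbb{F}_n$, the set $L_{m,c}=\{(x,mx+c)\mid x\in\mathbb{F}_n\}$;
\item for each $c\in\mathbb{F}_n$, the vertical line $L_{\infty,c}=\{(c,y)\mid y\in\mathbb{F}_n\}$.
\end{itemize}
Each line has exactly $n$ points, so blocks have size $n$. Since $n\geq 2$, we have $2\leq n<n^2$, as the definition of a BIBD requires.

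It remains to check that $\lambda=1$. Take distinct points $(x_1,y_1)$ and $(x_2,y_2)$.
\begin{itemize}
\item If $x_1=x_2$, the points lie on $L_{\infty,x_1}$. They lie on no line $L_{m,c}$, because such a line contains only one point with a given first coordinate.
\item If $x_1\neq x_2$, no vertical line contains both points. A line $L_{m,c}$ contains both exactly when $y_1=mx_1+c$ and $y_2=mx_2+c$. Subtracting gives $m=(y_2-y_1)(x_2-x_1)^{-1}$, which is defined because $\mathbb{F}_n$ is a field. Then $c=y_1-mx_1$ is forced, so exactly one such line exists.
\end{itemize}

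Hence $(V,\{L_{m,c}\})$ is an $(n^2,n,1)$-BIBD, which is an affine plane of order $n$. The only non-elementary ingredient is the existence of $\mathbb{F}_n$ for prime powers $n$, and invertibility of nonzero elements is exactly what forces uniqueness of the line through two points. No step is a real obstacle.
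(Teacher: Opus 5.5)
Your proposal is correct. Your first paragraph is exactly the paper's argument: the paper cites Veblen--Bussey for projective planes of prime power order, then applies the well-known equivalence between projective and affine planes of the same order.

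Your self-contained alternative, which builds the lines of $\mathbb{F}_n^2$ directly, is also correct. It avoids both the Veblen--Bussey citation and the projective--affine equivalence, and needs only the existence of $\mathbb{F}_n$. It also gives something extra: the lines of a fixed slope, and separately the vertical lines, visibly partition $\mathbb{F}_n^2$. This exhibits the resolvability directly, which the paper instead imports from a citation (Lemma~\ref{lem: affine planes are resolvable}) when proving Theorem~\ref{thm: affine plane gives 0-LSE l-col BIBD}.
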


Determining if there exists a projective plane of an order that is not a power of a prime is a long-standing open problem. Proving that there is no projective plane of order $6$ is credited to Tarry \cite{T1900} and proving that there is no projective plane of order $10$ was done by Lam, Thiel, and Swiercz \cite{LTS89}. It remains open whether there exists a projective plane of order $12$, which is the smallest open case. 

Lemma~\ref{lem: affine plane implies TD} shows how we obtain one of the ingredient designs in Theorem~\ref{thm: general construction for nice 0-LSE l-coloured BIBDs} from an affine plane. The following known property of affine planes shows that affine planes can be used as the second ingredient. 

\begin{lemma}[See Theorem 5.9 in \cite{S04}]\label{lem: affine planes are resolvable}
    If there exists an affine plane of order $n$, then it is resolvable. 
\end{lemma}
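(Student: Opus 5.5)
The plan is to exhibit an explicit resolution of an affine plane of order $n$ by parallel classes. View the plane as an $(n^2,n,1)$-BIBD $(V,\mathcal{B})$. Its replication number is $r=\frac{n^2-1}{n-1}=n+1$ by Lemma~\ref{lem: replication num of a BIBD}, and its number of blocks is $b=\frac{n^2(n+1)}{n}=n(n+1)$ by Lemma~\ref{lem: num of blocks in a BIBD}. Call two blocks \emph{parallel} if they are equal or disjoint. The resolution classes will be the equivalence classes of this relation, and the proof reduces to three claims: parallelism is an equivalence relation, each class partitions $V$, and the classes exhaust $\mathcal{B}$.

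\begin{enumerate}
\item[(a)] \emph{Two distinct blocks meet in at most one point.} Since $\lambda=1$, two distinct blocks sharing two points would both contain that pair, which is impossible.
\item[(b)] \emph{The Playfair property.} Fix a block $B$ and a point $x\notin B$. Each of the $n$ points $y\in B$ lies on exactly one block through $x$ (the one containing the pair $xy$). By (a), these blocks are pairwise distinct, since a block through $x$ meeting $B$ in two points would share two points with $B$. So exactly $n$ of the $r=n+1$ blocks through $x$ meet $B$, and exactly one block through $x$ is disjoint from $B$.
\item[(c)] \emph{Transitivity.} Suppose $B_1\parallel B_2$ and $B_2\parallel B_3$ with $B_1\neq B_3$, and suppose $B_1\cap B_3\neq\emptyset$, say $x\in B_1\cap B_3$. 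Then $x\notin B_2$, and $B_1,B_3$ are two distinct blocks through $x$ disjoint from $B_2$, contradicting (b). Reflexivity and symmetry are clear.
\end{enumerate}

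Within an equivalence class the blocks are pairwise disjoint. For any block $B$ in the class and any $x\notin B$, (b) gives a block through $x$ parallel to $B$, so the class covers $V$. Hence each class is a partition of $V$ into $n$ blocks of size $n$. Since every block lies in exactly one class, the classes partition $\mathcal{B}$ into $\frac{n(n+1)}{n}=n+1$ resolution classes $\Pi_1,\dots,\Pi_{n+1}$, which is exactly the definition of resolvable.

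The only step needing real care is (b). Its counting depends on $r=n+1$ and on the distinctness of the lines $xy$, which comes from (a); this is where the specific parameters $(n^2,n,1)$ are used. Everything else is formal.
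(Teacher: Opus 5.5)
Your proof is correct and is the standard argument for this result; the paper gives no proof of its own and only cites Stinson, where the proof also establishes the Playfair property by counting the $r=n+1$ blocks through $x$ and then shows that ``equal or disjoint'' is an equivalence relation whose classes are the parallel classes. One small thing to tidy: in (c), the step ``$x\notin B_2$'' tacitly assumes $B_1\neq B_2$, but the cases $B_1=B_2$ or $B_2=B_3$ are immediate, since then $B_1\parallel B_3$ follows directly from the hypotheses.
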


\begin{theorem}\label{thm: affine plane gives 0-LSE l-col BIBD}
    If $\ell \in \mathbb{Z}^+$ such that there exists an affine plane of order $\ell-2$, then there exists a $(\ell(\ell-2)^2, (\ell-1)(\ell-2), \ell-2)$-BIBD with a $0$-ULSE $\ell$-colouring.
\end{theorem}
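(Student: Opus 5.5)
We apply Theorem~\ref{thm: general construction for nice 0-LSE l-coloured BIBDs} with $v = \ell(\ell-2)^2$, $k = (\ell-1)(\ell-2)$ and $\lambda = \ell-2$, so the only work is to identify the two ingredient designs it requires and obtain both from the affine plane of order $n := \ell-2$.

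First we compute the required parameters. We have $\frac{v}{\ell} = (\ell-2)^2 = n^2$ and $\frac{k}{\ell-1} = \ell-2 = n$. The index of the RBIBD is
\[\frac{k\ell(\ell-2)}{v(\ell-1)} = \frac{(\ell-1)(\ell-2)\ell(\ell-2)}{\ell(\ell-2)^2(\ell-1)} = 1,\]
so the RBIBD ingredient is an $(n^2,n,1)$-RBIBD, that is, a resolvable affine plane of order $n$. For the transversal design we have $\frac{v(\ell-1)}{k\ell} = \frac{\ell(\ell-2)^2(\ell-1)}{(\ell-1)(\ell-2)\ell} = \ell-2 = n$ and $\frac{\lambda}{\ell-2} = 1$, so the TD ingredient is a TD$(\ell-1,\ell-2) = $ TD$(n+1,n)$.

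Existence then follows from earlier results. The affine plane of order $n$ is resolvable by Lemma~\ref{lem: affine planes are resolvable}, which supplies the RBIBD. Lemma~\ref{lem: affine plane implies TD} supplies a TD$(n+1,n)$.

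It remains to check the hypotheses of Theorem~\ref{thm: general construction for nice 0-LSE l-coloured BIBDs}. We need $\ell \geq 4$, which holds because an affine plane of order $n$ requires $n\geq 2$ (a BIBD needs $2\le k<v$). The definition of a transversal design requires $n>2$, so the case $n=2$, $\ell=4$ needs a remark. This is the only real obstacle, and it is minor. A TD$(3,2)$ does exist: it was already used in the $(16,6,2)$ example, where $\ell=4$ yields exactly $(4\cdot 4,\,6,\,2)$. Either the TD definition is read as permitting $n=2$ there, or that case is handled directly by the explicit example. We also confirm that $v$ satisfies Equation~\eqref{eq: k=0 mod l-1, soln for v}: $(\ell-1)^2-k = \ell-1$, so $\frac{\ell(\ell-2)k}{\ell-1} = \ell(\ell-2)^2$, as required. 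Theorem~\ref{thm: general construction for nice 0-LSE l-coloured BIBDs} then yields a $0$-ULSE $\ell$-coloured $(\ell(\ell-2)^2,(\ell-1)(\ell-2),\ell-2)$-BIBD.
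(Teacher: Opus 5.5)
Your proposal is correct and matches the paper's proof, which simply applies Lemmas~\ref{lem: affine plane implies TD} and~\ref{lem: affine planes are resolvable} to Theorem~\ref{thm: general construction for nice 0-LSE l-coloured BIBDs}. The paper also cites Lemma~\ref{lem: affine planes of prime power order}, which is superfluous given the hypothesis, and its one-line proof omits three things you check explicitly: the parameter computations, Equation~\eqref{eq: k=0 mod l-1, soln for v} (which the general construction tacitly needs so the RBIBD has exactly $\ell-1$ resolution classes), and the $n>2$ clause in the TD definition when $\ell=4$.
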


\begin{proof}
    The result follows by applying Lemmas~\ref{lem: affine plane implies TD}, \ref{lem: affine planes of prime power order}, and \ref{lem: affine planes are resolvable} to Theorem~\ref{thm: general construction for nice 0-LSE l-coloured BIBDs}.
\end{proof}

In \cite{S69}, it is proven that if a Hadamard matrix of order $4n$ exists, then a $((4n)^2, 2n(4n-1), 2n(2n-1))$-BIBD exists. It was shown in \cite{AS69} that given a prime power $q$, a $((q+2)q^2, (q+1)q, q)$-BIBD can be constructed. Theorems~\ref{thm: Hadamard matrix gives 0-ULSE coloured BIBD} and \ref{thm: affine plane gives 0-LSE l-col BIBD} strengthen these results by adding a colouring condition to the output BIBDs in the results from \cite{AS69, S69}.

\section{Twin Prime Powers}\label{sec: TPP Construction}

Not every $0$-ULSE $\ell$-coloured BIBD is captured by Theorem~\ref{thm: general construction for nice 0-LSE l-coloured BIBDs}. In this section, we construct an infinite family that cannot be constructed using Theorem~\ref{thm: general construction for nice 0-LSE l-coloured BIBDs}. To do this, we start by stating an elementary result concerning finite fields.

\begin{lemma}\label{lem: finite fields result}
    If $\mathbb{F}_q$ is a finite field of order $q = p^n$ where $p$ is an odd prime and $n\in \mathbb{Z}^+$, then there are $\frac{q-1}{2}$ nonzero squares in $\mathbb{F}_q$ and $\frac{q-1}{2}$ nonsquares. 
\end{lemma}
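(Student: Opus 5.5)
We prove the statement by looking at the squaring homomorphism on the multiplicative group. Let $\mathbb{F}_q^*$ be the multiplicative group of $\mathbb{F}_q$; it has order $q-1$. Define $\varphi:\mathbb{F}_q^*\to\mathbb{F}_q^*$ by $\varphi(x)=x^2$. Since $\mathbb{F}_q$ is commutative, $(xy)^2=x^2y^2$, so $\varphi$ is a group homomorphism. Its image is exactly the set of nonzero squares in $\mathbb{F}_q$.

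The next step is to compute the kernel. An element $x$ lies in $\ker\varphi$ exactly when $x^2=1$, that is, $(x-1)(x+1)=0$. A field has no zero divisors, so this forces $x=1$ or $x=-1$. Because $p$ is odd, the characteristic of $\mathbb{F}_q$ is not $2$, and hence $1\neq -1$. Therefore $|\ker\varphi|=2$.

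The first isomorphism theorem now gives $|\operatorname{im}\varphi| = (q-1)/2$, so there are exactly $\frac{q-1}{2}$ nonzero squares. Every element of $\mathbb{F}_q^*$ that is not a square is a nonsquare, so the nonsquares number $(q-1)-\frac{q-1}{2}=\frac{q-1}{2}$. (The element $0$ is a square, $0=0^2$, but it is excluded from the count of nonzero squares and so does not affect either total.)

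The only step that needs care is the kernel computation. It uses two facts: a field has no zero divisors, and $p$ odd guarantees $1\neq-1$. The latter is exactly where the hypothesis that $p$ is odd enters; in characteristic $2$ the squaring map is a bijection and the counts fail. An equivalent route avoids homomorphisms: each nonzero square $y=x^2$ has exactly the two square roots $\pm x$, which are distinct when $p$ is odd, so the $q-1$ nonzero elements pair off two-to-one onto the squares.
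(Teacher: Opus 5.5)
Your proof is correct and complete. The squaring map on $\mathbb{F}_q^*$ has kernel $\{1,-1\}$ of size $2$, since $p$ odd gives $1\neq -1$, so its image (the nonzero squares) has size $\frac{q-1}{2}$. Because $0=0^2$ is a square, the nonsquares are exactly the remaining $\frac{q-1}{2}$ elements of $\mathbb{F}_q^*$.

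The paper gives no proof of this lemma; it states it as an elementary fact about finite fields. Your argument is the standard proof that the paper implicitly relies on.
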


A positive integer $n$ is called a \emph{prime power} if $n = p^\omega$ where $p$ is a prime and $\omega \in \mathbb{Z}^+$. Two integers $n_1$ and $n_2$ are called \emph{twin prime powers} if $n_1$ and $n_2$ are prime powers, and $|n_1 - n_2| = 2$.

Let $G$ be a finite group of order $v$ with identity element $0$. A $(v,k,\lambda)$ \emph{difference set} $D\subseteq G$ is a set of $k$ group elements such that the multiset $\{x-y \mid x,y\in D, x\neq y\}$ is the multiset of all elements of $G\backslash \{0\}$ repeated $\lambda$ times. Difference sets can be used to construct symmetric BIBDs (see Chapter 3 of \cite{S04}). Given twin prime powers, $\ell$ and $\ell - 2$, Stanton and Sprott in \cite{SS58} prove the existence of an $\left(\ell(\ell - 2), \frac{\ell^2 - 1}{2} - \ell, \frac{(\ell - 3)(\ell +1)}{4}\right)$ difference set.

\begin{lemma}[\cite{SS58}]\label{lem: twin prime power difference set}
    Let $\ell$ and $\ell - 2$ be prime powers. The \emph{twin prime power difference set} is defined to be \[\{ (x,y) \in \mathbb{F}_{\ell -2}\times \mathbb{F}_{\ell} \mid \text{$x$ and $y$ are both nonzero squares, or both nonsquares, or $y=0$} \}\] where the differences are calculated over the additive group of the field. This set generates an $\left(\ell(\ell - 2), \frac{\ell^2 - 1}{2} - \ell, \frac{(\ell - 3)(\ell +1)}{4}\right)$-BIBD.
\end{lemma}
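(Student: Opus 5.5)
Since the set lives in the abelian group $G=\mathbb{F}_{\ell-2}\times\mathbb{F}_{\ell}$ of order $v=\ell(\ell-2)$, the plan is to show directly that $D$ is a difference set with the stated parameters; the symmetric BIBD then comes from the standard development $\{D+g \mid g\in G\}$. Write $q=\ell-2$, so $q$ and $q+2$ are odd prime powers. Let $\chi_1$ and $\chi_2$ be the quadratic characters of $\mathbb{F}_q$ and $\mathbb{F}_{q+2}$, with $\chi(0)=0$.

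Then $D$ consists of the pairs $(x,y)$ with $x,y\neq 0$ and $\chi_1(x)\chi_2(y)=1$, together with the line $\mathbb{F}_q\times\{0\}$. By Lemma~\ref{lem: finite fields result},
\[
|D| = \frac{(q-1)(q+1)}{2} + q = \frac{\ell^2-1}{2}-\ell ,
\]
which is the claimed block size. The index will then be forced by the counting identity
\[
k(k-1)=\lambda(v-1),
\]
so the real task is to show that every nonzero $g=(a,b)$ has the same number $N(a,b)$ of representations $g=d_1-d_2$ with $d_1,d_2\in D$.

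First I will encode membership in $D$ by the indicator
\[
1_D(x,y)=\tfrac12\bigl(1+\chi_1(x)\chi_2(y)\bigr)\,[x\neq0][y\neq0] \;+\; [y=0].
\]
Next I will expand $N(a,b)=\sum_{(x,y)} 1_D(x,y)\,1_D(x+a,y+b)$. I will split into three cases: $a\neq0,b\neq0$; $a=0,b\neq0$; and $a\neq0,b=0$. In each case the sum factors into sums over $\mathbb{F}_q$ and $\mathbb{F}_{q+2}$ separately. These factors are handled by the classical evaluations
\[
\sum_x \chi(x)=0, \qquad \sum_x \chi(x)\chi(x+c)=-1 \quad (c\neq0),
\]
plus boundary corrections coming from the terms where $x$, $y$, $x+a$ or $y+b$ vanishes.

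The key cancellation in the first case is the product term $\sum\chi_1(x)\chi_1(x+a)\cdot\sum\chi_2(y)\chi_2(y+b)=(-1)(-1)=1$. This has to combine with the corrections involving $\chi_1(-a)$, $\chi_2(b)$ and the line $y=0$. Those sign-dependent corrections should cancel, because $q(q+2)\equiv 3\pmod 4$ forces exactly one of $-1\in\mathbb{F}_q$, $-1\in\mathbb{F}_{q+2}$ to be a square. This bookkeeping of the boundary terms and of the sign of $\chi(-1)$ is the step I expect to be the main obstacle. I would organise it by writing each case's answer as the constant $\frac{(\ell-3)(\ell+1)}{4}$ plus a linear combination of $\chi_1(a)$, $\chi_1(-a)$, $\chi_2(b)$, $\chi_2(-b)$, and then checking that the coefficients vanish using $\chi_1(-1)\chi_2(-1)=-1$.

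The two degenerate cases reduce to one-field sums, using the fact that the line $y=0$ contributes exactly its own differences. Finally, I will verify that all three cases give the same value, and that this value matches $\lambda=\frac{k(k-1)}{v-1}$.
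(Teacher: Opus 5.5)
The paper does not prove this lemma. It is quoted from Stanton and Sprott~\cite{SS58} and used as a black box. Your proposal therefore replaces a citation with a self-contained argument rather than offering a different route to an in-paper proof, and it is correct.

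Write $A=[x\neq0][y\neq0]$, $E=\chi_1(x)\chi_2(y)$ and $L=[y=0]$, so that $1_D=\frac12(A+E)+L$. Let $A',E',L'$ denote the same functions evaluated at $(x+a,y+b)$. Expanding $N(a,b)$ gives
\begin{align*}
a\neq 0,\ b\neq 0:&\quad \tfrac14\bigl(q(q-2)+\chi_1(a)\chi_2(b)+\chi_1(-a)\chi_2(-b)+1\bigr)+(q-1),\\
a=0,\ b\neq 0:&\quad \tfrac14\bigl(q(q-1)-(q-1)\bigr)+(q-1),\\
a\neq 0,\ b=0:&\quad \tfrac14\bigl((q-2)(q+1)-(q+1)\bigr)+q.
\end{align*}
Each of these equals $\frac{(q-1)(q+3)}{4}=\frac{(\ell-3)(\ell+1)}{4}$. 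In the first case this uses $\chi_1(-a)\chi_2(-b)=\chi_1(-1)\chi_2(-1)\chi_1(a)\chi_2(b)=-\chi_1(a)\chi_2(b)$, exactly as you anticipated.

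Two points of your bookkeeping should be sharpened.

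First, the sign-dependent quantities are not a linear combination of $\chi_1(\pm a)$ and $\chi_2(\pm b)$. They are the two products above, coming from $\sum AE'$ and $\sum EA'$; for example, $\sum_{x\neq0}\chi_1(x+a)=-\chi_1(a)$.
\begin{itemize}
\item The line terms $\frac12\sum(A+E)L'+\frac12\sum L(A'+E')$ carry no characters at all. Their $E$-parts contain a full sum $\sum_x\chi_1(x)=0$.
\item These line terms contribute exactly $q-1$ when $b\neq0$ and vanish when $b=0$.
\item When $b=0$, the term $\sum LL'=q$ takes over; this is the only case where your remark that the line "contributes exactly its own differences" applies.
\end{itemize}

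Second, state explicitly why $q$ and $q+2$ may be taken odd. The hypothesis as written admits $\ell=4$ (with $\ell-2=2$), where the claimed $k=\frac72$ is not an integer. So the lemma is really about odd $\ell\geq5$, which is the only range in which the paper applies it.

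Your computation also shows where the twin hypothesis enters, which the bare citation hides. It enters in two places:
\begin{itemize}
\item the identity $\chi_1(-1)\chi_2(-1)=-1$;
\item the counts $|\mathbb{F}_{q+2}|-2=q$ and $|\mathbb{F}_{q+2}|-1=q+1$. With general field orders $q_1,q_2$, the cases $a=0$ and $b=0$ agree only when $q_2=q_1+2$.
\end{itemize}
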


\begin{lemma}\label{lem: l-2 SE l coloured twin prime power difference BIBDs}
    Let $\ell$ and $\ell - 2$ be twin prime powers. The $\left(\ell(\ell - 2), \frac{\ell^2 - 1}{2} - \ell, \frac{(\ell - 3)(\ell +1)}{4}\right)$-BIBDs with $\ell \geq 5$ generated by twin prime power difference sets are $(\ell-2)$-ULSE $\ell$-colourable.
\end{lemma}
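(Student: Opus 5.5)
The plan is to exhibit the colouring explicitly: colour each point $(x,y)\in \mathbb{F}_{\ell-2}\times\mathbb{F}_\ell$ by its second coordinate $y\in\mathbb{F}_\ell$. This gives $\ell$ colour classes, each of size $\ell-2$. The blocks of the BIBD are the translates $D+(a,b)$ of the twin prime power difference set $D$ from Lemma~\ref{lem: twin prime power difference set}. Translating by $(a,b)$ simply relabels colours via $y\mapsto y+b$. Hence it suffices to check the $(\ell-2)$-ULSE condition for the single block $D$.

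For $D$ itself we count how many points of each colour $y$ it contains.
\begin{itemize}
\item If $y=0$, then every $x\in\mathbb{F}_{\ell-2}$ qualifies, so colour $0$ appears exactly $\ell-2$ times. This is the distinguished colour with $n_i=\ell-2$.
\item If $y$ is a nonzero square in $\mathbb{F}_\ell$, then $x$ must be a nonzero square in $\mathbb{F}_{\ell-2}$. By Lemma~\ref{lem: finite fields result} there are $\frac{\ell-3}{2}$ such $x$.
\item If $y$ is a nonsquare, then $x$ must be a nonsquare, and again there are $\frac{\ell-3}{2}$ such $x$.
\end{itemize}
Here $\ell\ge 5$ ensures that both $\ell$ and $\ell-2$ are odd prime powers, so Lemma~\ref{lem: finite fields result} applies to both fields. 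Consequently, every colour other than $0$ appears exactly $\frac{\ell-3}{2}$ times in $D$.

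The uniformity check is the arithmetic identity
\[
\frac{k-(\ell-2)}{\ell-1}=\frac{\frac{\ell^2-1}{2}-2\ell+2}{\ell-1}=\frac{(\ell-1)(\ell-3)}{2(\ell-1)}=\frac{\ell-3}{2}.
\]
This is an integer, and it matches the counts above. Every block $D+(a,b)$ therefore has one colour, namely $b$, appearing $\ell-2$ times and all other colours appearing exactly $\frac{\ell-3}{2}$ times. This is an $(\ell-2)$-ULSE $\ell$-colouring, and the colouring is surjective.

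The only delicate point is the translation step. We must confirm that the blocks are exactly the translates under the additive group, so that $(a,b)$ acts on colours by a bijection. We also need $\ell$ and $\ell-2$ odd so that Lemma~\ref{lem: finite fields result} applies, which is where the hypothesis $\ell\ge5$ enters. Neither step presents a real obstacle.
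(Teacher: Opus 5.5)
Your proposal is correct and follows essentially the same argument as the paper. You colour each point by its $\mathbb{F}_\ell$-coordinate and note that each translate contains one full colour class plus $\frac{\ell-3}{2}$ points of every other colour, by the square/nonsquare count. You also make explicit two things the paper leaves implicit: the uniformity identity $\frac{k-(\ell-2)}{\ell-1}=\frac{\ell-3}{2}$, and why $\ell\ge 5$ forces both fields to have odd order.
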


\begin{proof}

    For each $y \in \mathbb{F}_\ell$, let $C_y = \{(x,y) \mid x\in \mathbb{F}_{\ell-2}\}$ be a colour class. For each $(z_1, z_2) \in \mathbb{F}_{\ell-2} \times \mathbb{F}_\ell$, let
    \[B_{(z_1, z_2)} = \{(x+z_1, y+z_2) \mid \text{$(x,y)$ is in the twin prime power difference set}\}.\] The set of blocks $\{B_{(z_1, z_2)} \mid (z_1, z_2) \in \mathbb{F}_{\ell-2} \times \mathbb{F}_\ell\}$ form an $\left(\ell(\ell - 2), \frac{\ell^2 - 1}{2} - \ell, \frac{(\ell - 3)(\ell +1)}{4}\right)$-BIBD.

    Fix $(z_1, z_2) \in \mathbb{F}_{\ell-2} \times \mathbb{F}_\ell$. Then $C_{z_2} \subseteq B_{(z_1, z_2)}$. Let $m\in \mathbb{F}_\ell \backslash \{z_2\}$. Either $m - z_2 \in \mathbb{F}_{\ell}$ is a nonzero square or a nonsquare. Suppose $m-z_2$ is a nonzero square. By Lemma~\ref{lem: finite fields result} there are $\frac{\ell-3}{2}$ nonzero squares in $\mathbb{F}_{\ell-2}$. Therefore there are $\frac{\ell-3}{2}$ elements $x\in \mathbb{F}_{\ell-2}$ such that $(x, m-z_2)$ is in the twin prime power difference set. So $|C_m \cap B_{(z_1, z_2)}| = \frac{\ell-3}{2}$. By a nearly identical argument, if $m - z_2$ is a nonsquare, then $|C_m \cap B{(z_1, z_2)}| = \frac{\ell-3}{2}$. 

    Thus, within every block of the $\left(\ell(\ell - 2), \frac{\ell^2 - 1}{2} - \ell, \frac{(\ell - 3)(\ell +1)}{4}\right)$-BIBD, one colour appears $\ell-2$ times and the rest appear exactly $\frac{\ell-3}{2}$ times. This forms an $(\ell-2)$-ULSE $\ell$-colouring.
\end{proof}

By taking the complement of the BIBDs from Lemma~\ref{lem: l-2 SE l coloured twin prime power difference BIBDs}, we obtain a family of BIBDs that admit $0$-ULSE colourings.

\begin{theorem}\label{thm: twin prime powers give 0-ULSE BIBD}
    If $\ell\in \mathbb{Z}$ such that $\ell \geq 5$, $\ell$ is a prime power, and $\ell - 2$ is a prime power, then there exists a $0$-ULSE $\ell$-coloured $\left(\ell(\ell - 2), \frac{(\ell - 1)^2}{2}, \frac{(\ell - 1)^2}{4}\right)$-BIBD.
\end{theorem}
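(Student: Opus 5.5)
The plan is to take the complement of the design from Lemma~\ref{lem: l-2 SE l coloured twin prime power difference BIBDs} and apply Corollary~\ref{cor: complements of t-LSE colourings} with $t=\ell-2$.

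First, Lemma~\ref{lem: twin prime power difference set} and Lemma~\ref{lem: l-2 SE l coloured twin prime power difference BIBDs} give a $(v,k,\lambda)$-BIBD $\mathcal{D}$ with
\[
v=\ell(\ell-2),\qquad k=\frac{\ell^2-1}{2}-\ell,\qquad \lambda=\frac{(\ell-3)(\ell+1)}{4},
\]
together with an $(\ell-2)$-ULSE $\ell$-colouring whose colour classes $C_y=\mathbb{F}_{\ell-2}\times\{y\}$, $y\in\mathbb{F}_\ell$, all have size $\ell-2=\frac{v}{\ell}$. So $\ell$ divides $v$ and every colour class has the same size, as the corollary requires.

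Next, I check $k\leq v-2$. We have $v-k=\ell^2-2\ell-\frac{\ell^2-1}{2}+\ell=\frac{\ell^2-2\ell+1}{2}=\frac{(\ell-1)^2}{2}$, and this is at least $2$ for $\ell\geq 5$. Here $\ell$ is odd, since $\ell$ and $\ell-2$ are twin prime powers with $\ell\geq 5$. This computation also shows that the complementary block size is $\frac{(\ell-1)^2}{2}$, matching the statement.

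Corollary~\ref{cor: complements of t-LSE colourings} then yields a $\left(\frac{v}{\ell}-t\right)$-LSE $\ell$-colouring of a $(v,v-k,b-2r+\lambda)$-BIBD, and $\frac{v}{\ell}-(\ell-2)=0$. By the proof of Theorem~\ref{thm: complements of SE colourings}, in each complementary block the distinguished colour is absent. Each other colour appears $(\ell-2)-\frac{\ell-3}{2}=\frac{\ell-1}{2}$ times. Since $(\ell-1)\cdot\frac{\ell-1}{2}=\frac{(\ell-1)^2}{2}$, the quantity $\frac{v-k}{\ell-1}$ is an integer, so the colouring is uniform, i.e.\ $0$-ULSE.

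The only real work is the index. The design is symmetric, being generated by a difference set, so $b=v$ and $r=k$. Then
\[
b-2r+\lambda=\ell(\ell-2)-(\ell^2-1-2\ell)+\frac{(\ell-3)(\ell+1)}{4}=1+\frac{\ell^2-2\ell-3}{4}=\frac{(\ell-1)^2}{4}.
\]
This is an integer because $\ell$ is odd. As a cross-check, it agrees with Theorem~\ref{thm: symmetric 0-ULSE lambda value}: with $k'=\frac{(\ell-1)^2}{2}$ we get $\frac{k'((\ell-1)^2-k')}{(\ell-1)^2}=\frac{(\ell-1)^2}{4}$.

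I expect no real obstacle. The main care points are confirming symmetry, so that $r=k$ and $b=v$, and the oddness of $\ell$ needed for integrality.
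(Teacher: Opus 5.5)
Your proposal is correct and is the paper's proof: the paper applies Corollary~\ref{cor: complements of t-LSE colourings} with $t=\ell-2$ to the $(\ell-2)$-ULSE $\ell$-colouring of Lemma~\ref{lem: l-2 SE l coloured twin prime power difference BIBDs} in a single line. You additionally carry out the routine checks the paper leaves implicit: colour classes of size $\ell-2=\frac{v}{\ell}$, $k\le v-2$, symmetry giving $b-2r+\lambda=\frac{(\ell-1)^2}{4}$, and uniformity of the complementary colouring, with each present colour appearing $\frac{\ell-1}{2}$ times, which uses that $\ell$ is odd.
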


\begin{proof}
    Applying Corollary~\ref{cor: complements of t-LSE colourings} to Lemma~\ref{lem: l-2 SE l coloured twin prime power difference BIBDs} immediately gives the result. 
\end{proof}

Regarding the $\left(\ell(\ell - 2), \frac{(\ell - 1)^2}{2}, \frac{(\ell - 1)^2}{4}\right)$-BIBDs from Theorem~\ref{thm: twin prime powers give 0-ULSE BIBD}, if we substitute $4n-1 = \ell(\ell-2)$ into the parameters of the BIBDs we obtain $(4n-1, 2n, n)$. The complement of a $(4n-1, 2n, n)$-BIBD is a $(4n-1, 2n-1, n-1)$-BIBD which, recalling from Section~\ref{sec: Hadamard matrices}, exists if and only if a Hadamard matrix of order $4n$ exists. Pairing this observation with Theorem~\ref{thm: Hadamard matrix gives 0-ULSE coloured BIBD} establishes that $0$-ULSE coloured BIBDs can be constructed from Hadamard matrices, and Hadamard matrices can be constructed from $0$-ULSE coloured BIBDs. It is already known that the twin prime power difference set can be used to construct Hadamard matrices (see for example \cite{OCS10}).

To see that the $0$-LSE $\ell$-coloured $\left(\ell(\ell-2), \frac{(\ell-2)^2}{2}, \frac{(\ell-1)^4}{4}\right)$-BIBDs from Theorem~\ref{thm: twin prime powers give 0-ULSE BIBD} cannot be constructed from Theorem~\ref{thm: general construction for nice 0-LSE l-coloured BIBDs}, consider the ingredients if the construction were possible. Note that $\frac{(\ell-1)^2}{4(\ell-2)}\notin \mathbb{Z}$ since gcd$(\ell-1, \ell-2) = 1$ and $\ell \geq 4$. However, the construction from Theorem~\ref{thm: general construction for nice 0-LSE l-coloured BIBDs} would a require a transversal design of index $\frac{(\ell-1)^2}{4(\ell-2)}$ which is not possible.

As a final note regarding all three infinite families of $0$-ULSE $\ell$-coloured BIBDs from Theorems~\ref{thm: Hadamard matrix gives 0-ULSE coloured BIBD}, \ref{thm: affine plane gives 0-LSE l-col BIBD}, and \ref{thm: twin prime powers give 0-ULSE BIBD}, it can be easily verified that the only BIBD that is contained in more than one family is the $0$-ULSE $4$-coloured $(16,6,2)$-BIBD built by Hadamard matrices and affine planes.

\section{Open Problems}\label{sec: Open Problems}

From Theorems \ref{thm: Hadamard matrix gives 0-ULSE coloured BIBD} and \ref{thm: affine plane gives 0-LSE l-col BIBD}, we have a $0$-ULSE $\ell$-coloured BIBD for every $\ell \equiv 0 \pmod{4}$ where a Hadamard matrix of order $\ell$ exists and also for every $\ell \geq 5$ where $\ell - 2$ is a prime power. If we assume Hadamard's conjecture is true, then it remains open whether there exists a $0$-ULSE $\ell$-coloured BIBD for the following cases: 
\begin{itemize}
    \item[(i)] $\ell \equiv 2 \pmod{4}$ and $\ell \geq 6$, and
    \item[(ii)] $\ell \geq 5$ is odd and $\ell-2$ is not a prime power.
\end{itemize}

Table~\ref{tab: k=0 mod l-1, possible BIBDs} gives a list of all values for $v$, $k$, $\ell$, and $\lambda$ that satisfy Equation~\eqref{eq: k=0 mod l-1, soln for v} and Lemmas~\ref{lem: replication num of a BIBD} and \ref{lem: num of blocks in a BIBD} for $4\leq \ell \leq 14$. The value for the index is the $\lambda_{\min}$ from Theorem~\ref{thm: symmetric 0-ULSE lambda value}. For the families of BIBDs found in this paper, we refer to their corresponding theorems in the rightmost column of Table~\ref{tab: k=0 mod l-1, possible BIBDs}. An entry in the rightmost column of Table~\ref{tab: k=0 mod l-1, possible BIBDs} is left blank if there is no known $0$-ULSE coloured BIBD with the parameters in the corresponding row.

\begin{table}[p]
    \centering
    \begin{tabular}{c|c|c|c|c|c}
$\ell$ & $|C_i|$ & $v$ & $k$ & $\lambda_{\min}$ & Existence? \\ \hline
$4$ & $4$ & $16$ & $6$ & $2$ & Theorems~\ref{thm: Hadamard matrix gives 0-ULSE coloured BIBD} and \ref{thm: affine plane gives 0-LSE l-col BIBD}\\
$5$ & $3$ & $15$ & $8$ & $4$ & Theorem~\ref{thm: twin prime powers give 0-ULSE BIBD}\\
$5$ & $9$ & $45$ & $12$ & $3$ & Theorem~\ref{thm: affine plane gives 0-LSE l-col BIBD}\\
$6$ & $6$ & $36$ & $15$ & $6$ & \\
$6$ & $16$ & $96$ & $20$ & $4$ & Theorem~\ref{thm: affine plane gives 0-LSE l-col BIBD}\\
$7$ & $5$ & $35$ & $18$ & $9$ & Theorem~\ref{thm: twin prime powers give 0-ULSE BIBD}\\
$7$ & $10$ & $70$ & $24$ & $8$ & \\
$7$ & $25$ & $175$ & $30$ & $5$ & Theorem~\ref{thm: affine plane gives 0-LSE l-col BIBD}\\
$8$ & $8$ & $64$ & $28$ & $12$ & Theorem~\ref{thm: Hadamard matrix gives 0-ULSE coloured BIBD}\\
$8$ & $15$ & $120$ & $35$ & $10$ & \\
$8$ & $36$ & $288$ & $42$ & $6$ & \\
$9$ & $7$ & $63$ & $32$ & $16$ & Theorem~\ref{thm: twin prime powers give 0-ULSE BIBD}\\
$9$ & $21$ & $189$ & $48$ & $12$ & \\
$9$ & $49$ & $441$ & $56$ & $7$ & Theorem~\ref{thm: affine plane gives 0-LSE l-col BIBD}\\
$10$ & $4$ & $40$ & $27$ & $18$ & Theorem~\ref{thm: there exists a 0-ULSE (40,27,18)-BIBD}\\
$10$ & $10$ & $100$ & $45$ & $20$ & \\
$10$ & $16$ & $160$ & $54$ & $18$ & \\
$10$ & $28$ & $280$ & $63$ & $14$ & \\
$10$ & $64$ & $640$ & $72$ & $8$ & Theorem~\ref{thm: affine plane gives 0-LSE l-col BIBD}\\
$11$ & $6$ & $66$ & $40$ & $24$ & \\
$11$ & $9$ & $99$ & $50$ & $25$ & Theorem~\ref{thm: twin prime powers give 0-ULSE BIBD}\\
$11$ & $21$ & $231$ & $70$ & $21$ & \\
$11$ & $36$ & $396$ & $80$ & $16$ & \\
$11$ & $81$ & $891$ & $90$ & $9$ & Theorem~\ref{thm: affine plane gives 0-LSE l-col BIBD}\\
$12$ & $12$ & $144$ & $66$ & $30$ & Theorem~\ref{thm: Hadamard matrix gives 0-ULSE coloured BIBD}\\
$12$ & $45$ & $540$ & $99$ & $18$ & \\
$12$ & $100$ & $1200$ & $110$ & $10$ & \\
$13$ & $11$ & $143$ & $72$ & $36$ & Theorem~\ref{thm: twin prime powers give 0-ULSE BIBD}\\
$13$ & $22$ & $286$ & $96$ & $32$ & \\
$13$ & $33$ & $429$ & $108$ & $27$ & \\
$13$ & $55$ & $715$ & $120$ & $20$ & \\
$13$ & $121$ & $1573$ & $132$ & $11$ & Theorem~\ref{thm: affine plane gives 0-LSE l-col BIBD}\\
$14$ & $14$ & $196$ & $91$ & $42$ & \\
$14$ & $27$ & $378$ & $117$ & $36$ & \\
$14$ & $40$ & $560$ & $130$ & $30$ & \\
$14$ & $66$ & $924$ & $143$ & $22$ & \\
$14$ & $144$ & $2016$ & $156$ & $12$ &
    \end{tabular}
    \caption{Admissible parameters for symmetric $0$-ULSE $\ell$-coloured $(v,k,\lambda_{\min})$-BIBDs where $4\leq \ell \leq 14$.}
    \label{tab: k=0 mod l-1, possible BIBDs}
\end{table}

\begin{question}
    How can we find more families of $0$-ULSE coloured BIBDs?
\end{question}

One possible candidate for a new family comes from projective geometries. Consider an odd integer $d\geq 3$. The complement of a projective geometry PG$_d(q)$ is a \[\left(\frac{q^{d+1} - 1}{q-1}, q^d, q^{d-1}(q-1)\right)\text{-BIBD}.\] It is known that there exists a projective geometry PG$_d(q)$ for all $d\geq 2$ and all prime powers $q$; see Section~2.3 of \cite{S04} for a proof. It can be verified that this BIBD satisfies Equation~\eqref{eq: k=0 mod l-1, soln for v} when $\ell = \sqrt{q^{d+1}} + 1$. It is impossible to create the ingredient designs in Theorem~\ref{thm: general construction for nice 0-LSE l-coloured BIBDs} from a $\left(\frac{q^{d+1} - 1}{q-1}, q^d, q^{d-1}(q-1)\right)$-BIBD. The necessary RBIBD ingredient fails to exist since the block size does not divide the number of points, as is required of an RBIBD. So if $0$-ULSE coloured $\left(\frac{q^{d+1} - 1}{q-1}, q^d, q^{d-1}(q-1)\right)$-BIBDs exist, they would constitute another infinite family that is not captured by Theorem~\ref{thm: general construction for nice 0-LSE l-coloured BIBDs}. In support of the hypothesis that this family does exist, we have the following result.

\begin{theorem}\label{thm: there exists a 0-ULSE (40,27,18)-BIBD}
    There exists a $0$-ULSE $\left(\sqrt{q^{d+1}} + 1\right)$-coloured $\left(\frac{q^{d+1} - 1}{q-1}, q^d, q^{d-1}(q-1)\right)$-BIBD for $q = d = 3$.
\end{theorem}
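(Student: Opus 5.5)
The plan is to work in the complement and use Corollary~\ref{cor: complements of t-LSE colourings}. For $q=d=3$ the target parameters are $(40,27,18)$ with $\ell=\sqrt{3^4}+1=10$. The complementary design is then a $(40,13,4)$-BIBD, namely the points and planes of $\mathrm{PG}_3(3)$. In a $0$-ULSE $10$-colouring every colour class has size $v/\ell=4$ by Lemma~\ref{lem: l-1 divides k, colour classes the same size, calculating alpha}, and every colour class has the same size as Corollary~\ref{cor: complements of t-LSE colourings} requires. So it suffices to give a $4$-LSE $10$-colouring of the point–plane design of $\mathrm{PG}_3(3)$ with all classes of size $4$. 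Concretely, this means a partition of the $40$ points into ten $4$-sets such that each plane contains one class entirely and meets each of the other nine classes in exactly one point.

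Such a partition is a \emph{line spread} of $\mathrm{PG}_3(3)$: ten pairwise disjoint lines covering all points. Lines of $\mathrm{PG}_3(3)$ have $q+1=4$ points, which is the right class size. For the construction I would take the regular (Desarguesian) spread. Identify $\mathbb{F}_3^4$ with $\mathbb{F}_9^2$, and take the $10$ one-dimensional $\mathbb{F}_9$-subspaces, i.e.\ the points of $\mathrm{PG}_1(9)$. Each is a $2$-dimensional $\mathbb{F}_3$-subspace, hence a line of $\mathrm{PG}_3(3)$. Two distinct such subspaces meet only in $0$, so these lines are disjoint. Since $10\cdot 4=40$, they cover every point.

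Next comes the intersection pattern. In projective $3$-space a line and a plane meet in either one point or the whole line, so each spread line meets a given plane $\pi$ in $1$ or $4$ points. Let $a$ be the number of spread lines contained in $\pi$. Counting the $13$ points of $\pi$ gives $4a+(10-a)=13$, so $a=1$. Hence every plane contains exactly one colour class, with $4=t$ points of it, and exactly one point of each of the other nine colours. This is a $4$-LSE (indeed $4$-ULSE) $10$-colouring of the $(40,13,4)$-BIBD.

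Finally, apply Corollary~\ref{cor: complements of t-LSE colourings} with $t=4$, $v/\ell=4$, $k=13\le v-2$, $b=r\cdot v/k=40$ and $r=13$. It yields a $0$-LSE $10$-colouring of the $(40,27,40-26+4)=(40,27,18)$-BIBD. In each complementary block one colour is absent and the others each appear $4-1=3=27/9$ times, so the colouring is uniform.

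The main step is recognising that $4$-point classes meeting every plane in $1$ or $4$ points must be lines, i.e.\ that a spread is exactly what is needed. Once the spread is chosen, the counting $4a+(10-a)=13$ and the complement step are routine.
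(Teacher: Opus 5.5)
Your proof is correct, but it reaches the result by a different route than the paper.

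The paper works in cyclic coordinates. It develops the explicit $(40,13,4)$ difference set $\{0,1,2,4,5,8,13,14,17,19,24,26,34\}\subseteq\mathbb{Z}_{40}$ and colours by the cosets $C_i=\{i,10+i,20+i,30+i\}$. The check, which the paper does not spell out, is that the base block contains the whole coset $\{4,14,24,34\}$ and exactly one element of each other residue class modulo $10$. This property is preserved under translation, and the paper then complements exactly as you do.

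You instead recognise the complementary design as the point--plane design of $\mathrm{PG}_3(3)$ and take a regular line spread as the colour classes. You get the intersection pattern from the line--plane dichotomy and the count $4a+(10-a)=13$.

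The underlying objects are closely related. In the Singer model $\mathbb{Z}_{40}\cong\mathbb{F}_{81}^*/\mathbb{F}_3^*$ of $\mathrm{PG}_3(3)$, the cosets of $\{0,10,20,30\}$ are exactly the lines $\alpha\mathbb{F}_9$ of the regular spread. The arguments buy different things, though: the paper's is a concrete, hand-checkable instance, while yours explains why it works and generalizes essentially verbatim.

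For odd $d=2m-1$, take a spread of $\mathrm{PG}_d(q)$ by $(m-1)$-dimensional subspaces, e.g.\ the $q^m+1$ one-dimensional $\mathbb{F}_{q^m}$-subspaces of $\mathbb{F}_{q^m}^2$. Its classes have size $\frac{q^m-1}{q-1}=v/\ell$ with $\ell=q^m+1$. Each class meets a hyperplane in $\frac{q^m-1}{q-1}$ or $\frac{q^{m-1}-1}{q-1}$ points, and the analogous count forces exactly one class inside each hyperplane.

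Complementing via Corollary~\ref{cor: complements of t-LSE colourings} then gives a $0$-ULSE $(q^m+1)$-colouring of the $\left(\frac{q^{d+1}-1}{q-1},q^d,q^{d-1}(q-1)\right)$-BIBD, with each present colour appearing $q^{m-1}$ times per block. So your approach appears to answer the paper's open question about this family affirmatively for all prime powers $q$ and all odd $d\ge 3$.
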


\begin{proof}
    Begin with the BIBD obtained from the $(40, 13, 4)$ difference set \[\{0,1,2,4,5,8,13,14,17,19,24,26,34\}\] in $\mathbb{Z}_{40}$. Next, partition the points in $\mathbb{Z}_{40}$ by the colour classes $C_i = \{i, 10 + i, 20 + i, 30 + i\}$ for each $0\leq i\leq 9$. This construction forms a $4$-ULSE $10$-coloured $(40,13,4)$-BIBD. Taking the complement of this $4$-ULSE $10$-coloured $(40,13,4)$-BIBD produces a $0$-ULSE $10$-coloured $(40,27,18)$-BIBD. 
\end{proof}

\begin{question}
    Does there exist a $0$-ULSE $(\sqrt{q^{d+1}} + 1)$-coloured $\left(\frac{q^{d+1} - 1}{q-1}, q^d, q^{d-1}(q-1)\right)$-BIBD for all prime powers $q$ and for all odd $d\geq 3$?
\end{question}

Another possible candidate is a family of BIBDs with parameters \[\left(\frac{1}{2}\ell(\ell-2)(\ell-3), (\ell-1)(\ell-3), 2(\ell-3)\right)\] where $\ell \geq 5$. It can be verified that such a BIBD satisfies Equation~\eqref{eq: k=0 mod l-1, soln for v}. However, such a family cannot be obtained by Theorem~\ref{thm: general construction for nice 0-LSE l-coloured BIBDs} since it would require a transversal design of index $\frac{2(\ell-3)}{\ell-2}$, which is not an integer if $\ell \geq 5$.

\begin{question}
    Does there exist a $0$-ULSE $\ell$-coloured $\left(\frac{1}{2}\ell(\ell-2)(\ell-3), (\ell-1)(\ell-3), 2(\ell-3)\right)$-BIBD for all $\ell\geq 5$?
\end{question}

Not every BIBD satisfying Equation~\eqref{eq: k=0 mod l-1, soln for v} has a $0$-ULSE colouring. As mentioned in Section~\ref{sec: Necessary Conditions}, there are five $(15,8,4)$-BIBDs up to isomorphism and by computer search we have determined that only one of them has a $0$-ULSE $5$-colouring. However, we have yet to find integers $v,k,\lambda, \ell \in \mathbb{Z}^+$ that satisfy Equation~\eqref{eq: k=0 mod l-1, soln for v} and Lemmas \ref{lem: replication num of a BIBD} and \ref{lem: num of blocks in a BIBD} such that there is no $0$-ULSE $\ell$-coloured symmetric $(v,k,\lambda)$-BIBD. This motivates the following conjecture.

\begin{conjecture}\label{conj: existence of 0-ULSE sym BIBDs}
    For every $v,k,\ell \in \mathbb{Z}^+$ satisfying Equation~\eqref{eq: k=0 mod l-1, soln for v} such that there exists a symmetric BIBD on $v$ points with block size $k$, there exists a $0$-ULSE $\ell$-coloured symmetric BIBD on $v$ points with block size $k$.
\end{conjecture}

The Bruck-Ryser-Chowla Theorem, proven by the work done in \cite{BR49, CR50, S49}, states that if a symmetric $(v,k,\lambda)$-BIBD exists, then $k-\lambda$ is a perfect square if $v$ is even and there exists a nontrivial solution to \[x^2 = (k-\lambda)y^2 + (-1)^{\frac{v-1}{2}}\lambda z^2\] if $v$ is odd. From Theorem~\ref{thm: symmetric 0-ULSE lambda value}, in a symmetric $0$-ULSE $\ell$-coloured $(v,k,\lambda)$-BIBD, $\lambda = \frac{k((\ell-1)^2 - k)}{(\ell-1)^2}$. Therefore $k-\lambda = \left(\frac{k}{\ell-1}\right)^2$. Furthermore, $(x,y,z) = (k, \ell-1, 0)$ is a nonzero solution to the above equation. So, the Bruck-Ryser-Chowla Theorem does not prove the nonexistence of any symmetric $0$-ULSE coloured BIBD.

Our work gives new necessary conditions for the existence of Hadamard matrices, affine planes, and twin prime powers. As mentioned in Section~\ref{sec: Applications of general construction}, determining when Hadamard matrices exist and determining when affine planes exist are two long-standing open problems. Whether there exist infinitely many twin prime powers, i.e. solutions to the equation
\begin{equation}\label{eq: TPP equation}
    p^n - q^m = 2
\end{equation}
where $p$ and $q$ are primes and $n,m\in \mathbb{Z}^+$, is also an open problem. If $n=m=1$ in \eqref{eq: TPP equation}, then $p$ and $q$ are twin primes. For background on the Twin Prime Conjecture, which asserts that there are infinitely many twin primes, see \cite{M19}. Solutions for $p$ when $n = 1$ and $m\geq 1$ are listed in sequence A267945 of the On-Line Encyclopedia of Integer Sequences (OEIS) \cite{OEISc} and solutions for $q$ when $n \geq 1$ and $m=1$ are listed in sequence A267944 in the OEIS \cite{OEISb}. If $n>1$ and $m>1$, then \eqref{eq: TPP equation} is a special case of Pillai's Conjecture \cite{P36, P37}, which states that for any fixed $c \in \mathbb{Z}^+$, there are finitely many solutions to the equation $x^n - y^m = c$ where $x,y,n,m\in \mathbb{Z}^+$ and $n,m \geq 2$. There is only one known solution to $x^n - y^m = 2$ (see \cite{OEISa}) which is $(x,y,n,m) = (3,5,3,2)$; the solution $(p,q,n,m) = (3,5,3,2)$ also satisfies \eqref{eq: TPP equation}. For the number of known solutions for each $c\in \mathbb{Z}^+$, see the sequence A076427 in the OEIS \cite{OEISa}.

It is well known that there are infinitely many Hadamard matrices and affine planes (see \cite{S04}) and so the families of $0$-ULSE coloured BIBDs in Theorems~\ref{thm: Hadamard matrix gives 0-ULSE coloured BIBD} and \ref{thm: affine plane gives 0-LSE l-col BIBD} are infinite families. Since we are interested in determining if the family of $0$-ULSE coloured BIBDs in Theorem~\ref{thm: twin prime powers give 0-ULSE BIBD} is also infinite, we ask the following question. 

\begin{question}
    Are there infinitely many twin prime powers?
\end{question}

By Theorems~\ref{thm: Hadamard matrix gives 0-ULSE coloured BIBD}, \ref{thm: affine plane gives 0-LSE l-col BIBD}, and \ref{thm: twin prime powers give 0-ULSE BIBD}, if a Hadamard matrix, an affine plane, or a pair of twin prime powers is given, then a $0$-ULSE coloured BIBD can be constructed. Consequently, the existences of the families of $0$-ULSE coloured BIBDs stated in Theorems~\ref{thm: Hadamard matrix gives 0-ULSE coloured BIBD} and \ref{thm: affine plane gives 0-LSE l-col BIBD} are necessary conditions for the existences of Hadamard matrices and affine planes respectively. Hence, proving the nonexistence of these $0$-ULSE coloured BIBDs of specific orders would prove the nonexistence of the corresponding Hadamard matrices and affine planes. Furthermore, proving that there are only finitely many of the $0$-ULSE coloured BIBDs from Theorem~\ref{thm: twin prime powers give 0-ULSE BIBD} would prove that there are only finitely many twin prime powers.

\begin{question}
    Can $0$-ULSE coloured BIBDs be used to show the nonexistence of Hadamard matrices, affine planes, and twin prime powers?
\end{question}

\section*{Acknowledgements}

Burgess is supported by NSERC (grant number
RGPIN-2025-04633), Kellough is supported by an NSERC Postgraduate Scholarship – Doctoral, and Pike is supported by NSERC (grant number RGPIN-2022-03829).

\end{document}